\newcommand\imCMsym[4][\mathord]{%
  \DeclareFontFamily{U} {#2}{}
  \DeclareFontShape{U}{#2}{m}{n}{
    <-6> #25
    <6-7> #26
    <7-8> #27
    <8-9> #28
    <9-10> #29
    <10-12> #210
    <12-> #212}{}
  \DeclareSymbolFont{CM#2} {U} {#2}{m}{n}
  \DeclareMathSymbol{#4}{#1}{CM#2}{#3}
}
\newcommand\alsoimCMsym[4][\mathord]{\DeclareMathSymbol{#4}{#1}{CM#2}{#3}}
\newcommand{\Asterisk}{\mathop{\scalebox{1.5}{\raisebox{-0.2ex}{$\ast$}}}}
\newtheorem{theo}[subsection]{Theorem}
\newtheorem{lem}[subsection]{Lemma}
\newtheorem{prop}[subsection]{Proposition}
\newtheorem{coro}[subsection]{Corollary}
\newtheorem{defi}[subsection]{Definition}
\newtheorem{rem}[subsection]{Remark}
\let\oldexem\exem
\renewcommand{\exem}{\oldexem\normalfont}
\let\olddefi\defi
\renewcommand{\defi}{\olddefi\normalfont}
\let\oldrem\rem
\renewcommand{\rem}{\oldrem\upshape}
\def\mathfrakdef#1{\expandafter\def\csname#1\endcsname{{\mathfrak#1}}}
\def\mathrmdef#1{\expandafter\def\csname#1\endcsname{{\rm#1}}}
\def\mathsfdef#1{\expandafter\def\csname#1\endcsname{{\rm\sf#1}}}
\def\mathcaldef#1{\expandafter\def\csname#1\endcsname{{\mathcal#1}}}
\def\alg{\mathfrak{alg}}
\def\tttt{\mathfrak{t}}
\def\pppp{\mathfrak{p}}
\def\coalg{\mathfrak{coalg}}
\def\aaa{\mathfrak{A}}
\def\bbb{\mathfrak{B}}
\def\ccc{\mathfrak{C}}
\def\AAA{\mathbb{A}}
\def\DDDD{\mathcal{D}}
\def\AAAA{\mathcal{A}}
\def\BBBB{\mathcal{B}}
\def\YYYY{\mathcal{Y}}
\def\PPPPP{\mathcal{P}}
\def\TTTTT{\mathcal{T}}
\def\SSSSS{\mathcal{S}}
\def\G{\mathfrak{G}}
\def\aaa{\mathfrak{A}}
\def\aaa{\mathfrak{A}}
\def\bbb{\mathfrak{B}}
\def\ccc{\mathfrak{C}}
\def\AAA{\mathcal{A}}
\def\hhh {\mathfrak{H}}
\title{Pseudomonads and Descent}
\author{Fernando Lucatelli Nunes}
\begin{document}



\frontmatter

\begin{titlepage}
  \vspace*{30mm}
  \maketitle
\end{titlepage}


\begin{acknowledgements}

The first year of my PhD studies was supported by a research grant  
of CMUC, Centre for Mathematics of the
University of Coimbra, under the project Pest-C/MAT/UI0324/2011, co-funded by FCT, FEDER and COMPETE.

The research was supported by CNPq, National Council for Scientific and Technological Development -- 
Brazil (245328/2012-2),  and by the CMUC -- UID/MAT/00324/2013, funded by FCT/MCTES and 
co-funded by the European Regional 
Development Fund through the Partnership Agreement PT2020.

I would like to thank the members of the CMUC for contributing to create 
such an inspiring, helpful and productive atmosphere which surely positively influenced me towards my work.
I am specially grateful to my supervisor Maria Manuel Clementino for
her patience, encouragement, insightful lessons and useful pieces of advice.

\end{acknowledgements}

\begin{abstract}

This thesis consists of one introductory chapter and four 
 single-authored papers written during my PhD studies, with minor adaptations. 
The original contributions of the papers are mainly within the study of 
pseudomonads and descent objects, including applications 
to descent theory, commutativity of weighted bilimits,
coherence and (presentations of) categorical structures.

In Chapter \ref{Introducao PhD},  we give a glance of the scope of our work and 
briefly describe elements of the original contributions of each paper, 
including some connections between them. We also give
a brief exposition of our main setting, which is $2$-dimensional category theory.
In this direction: (1) we give an exposition on the doctrinal adjunction, focusing 
on the Beck-Chevalley condition
as used in Chapter 3, (2) we apply the results of Chapter 5
in a generalized setting of the formal theory of monads and (3) we apply the biadjoint triangle 
theorem of Chapter 4 to study (pseudo)exponentiable pseudocoalgebras.

Chapter 2 corresponds to the paper 
\textit{Freely generated n-categories, coinserters and presentations of low dimensional categories},
\textit{DMUC 17-20} or \textit{arXiv:1704.04474}. 
We introduce and study presentations of categorical structures induced by  
$(n+1)$-computads and groupoidal computads. In this context, we introduce the notion of deficiency and presentations of groupoids via computads. 
We compare the resulting notions with those induced by monads together with a finite measure of objects. In particular,
we find our notions to generalize the usual ones. One important feature of this paper is that we show
that several freely generated structures are naturally given by
 coinserters. After recalling how the category freely generated by a graph $G $ internal to $\Set $
is given by the coinserter of $G$, we introduce higher icons and present the definitions of $n$-computads
via internal graphs of the $2$-category $n\Cat $ of $n$-categories, $n$-functors and $n$-icons. Within this setting,
we show that 
the \textit{$n$-category freely generated by an $n$-computad is also given by a coinserter}. 
Analogously, we demonstrate that the \textit{geometric realization} of a graph $G$ 
consists of a left adjoint functor $\F _ {\Top _1}: \grph\to \Top $ given objectwise by the 
\textit{topological coinserter}. Furthermore, as a fundamental tool to study presentation of
thin and locally thin
categorical structures,  
we give a detailed construction of a $2$-dimensional analogue of   $\F _ {\Top _1} $, denoted by $\F _ {\Top _2}: \cmp\to \Top$.
In the case of group presentations, 
$\F _ {\Top _2}$ formalizes the Lyndon-van Kampen diagrams. Finally, 
we sketch a construction of the $3$-dimensional version $\F _ {\Top _3}$ which
associates a $3$-dimensional CW-complex to each $3$-computad.

Chapter 3  corresponds to the article 
\textit{Pseudo-Kan Extensions and descent theory}, \textit{arXiv:1606.04999} under review.
We develop and employ results on idempotent pseudomonads to get theorems on the general setting of
descent theory, which, in our perspective, is the study of the image of pseudomonadic pseudofunctors. 
After giving a direct approach to prove 
an analogue of Fubini's Theorem
for weighted bilimits and constructing pointwise pseudo-Kan extensions,  we employ the results  
on pseudomonadic pseudofunctors to get theorems 
on commutativity of bilimits.
In order to use these results as the main framework to deal with
classical descent theory in the context of \cite{MR1466540}, we prove that the 
descent category (object) of a 
pseudocosimplicial category (object) is its
conical bilimit. We use, then, this formal approach of commutativity of bilimits 
to (1) recast classical theorems of descent theory,
(2) prove generalizations of such theorems and (3) get new results of descent theory. In this direction,
we give formal proofs of transfer theorems, embedding theorems, a pseudopullback theorem, 
a Galois Theorem and 
the B\'{e}nabou-Roubaud Theorem. We also apply the pseudopullback theorem to 
detect effective descent morphisms
in suitable categories of enriched categories in terms of (the embedding in) internal categories.

Chapter 4 corresponds to the 
article \textit{On Biadjoint Triangles}, published in \textit{Theory and Applications of Categories, Vol 31, N. 9} (2016).
The main contributions are the biadjoint triangle theorems, which have many 
applications in 
$2$-dimensional category theory. 
Examples of which are
given in this same paper: reproving the \textit{Pseudomonadicity characterization} of
\cite{MR1916482}, improving results on the \textit{$2$-monadic approach to coherence} 
of \cite{MR985657, MR1007911, MR1935980}, improving results on \textit{lifting of biadjoints} of 
\cite{MR1007911} and introducing the suitable concept of \textit{pointwise pseudo-Kan extension}.

Chapter 5 corresponds to the 
article \textit{On lifting of biadjoints and lax algebras}, to appear in  
\textit{Categories and General Algebraic Structures with Applications}. 
It can be seen as a complement of the precedent 
chapter,  since it gives further theorems on lifting of biadjoints 
provided that we can describe the categories
of morphisms of a certain domain in terms of weighted (bi)limits. 
This approach, together with 
results on lax descent objects and lax algebras, allows us to get results of lifting of biadjoints
involving (full) sub-$2$-categories of the $2$-category of lax algebras. As a consequence, we complete our 
treatment of the $2$-monadic approach to coherence via biadjoint triangle theorems.

\cleardoublepage
\setsinglecolumn
\chapter*{\centering \Large Resumo}
\thispagestyle{empty}

Esta tese consiste em um 
cap\'{i}tulo introdut\'{o}rio e 
quatro artigos de autoria única, escritos durante os meus estudos de doutoramento.
As contribui\c{c}\~{o}es originais dos artigos est\~{a}o principalmente 
dentro do contexto do estudo de pseudomónadas e objetos de descida,
com aplica\c{c}\~{o}es à teoria da descida, comutatividade de bilimites ponderados, 
coerência e apresenta\c{c}\~{o}es de estruturas categoriais.

No Cap\'{i}tulo \ref{Introducao PhD}, introduzimos aspectos do escopo do 
trabalho e descrevemos alguns elementos das contribui\c{c}\~{o}es originais de cada
artigo, incluindo interrela\c{c}\~{o}es entre elas. 
Damos tamb\'{e}m uma exposi\c{c}\~{a}o básica sobre o principal assunto da tese, 
nomeadamente, teoria das categorias de dimens\~{a}o $2$.
Nesse sentido, (1) introduzimos adjunção doutrinal, focando na condição de Beck-Chevalley,
com a perspectiva adotada no Capítulo 3, (2)
aplicamos resultados do Capítulo 5 em um contexto
generalizado da teoria formal das mónadas e (3) aplicamos o teorema de triângulos biadjuntos
do Capítulo 4 para estudar pseudocoalgebras (pseudo)exponenciáveis.

O Cap\'{i}tulo 2 corresponde ao artigo \textit{Freely generated n-categories, coinserters and presentations of low dimensional categories},
\textit{DMUC 17-20} ou \textit{arXiv:1704.04474}. Neste trabalho, introduzimos e estudamos apresenta\c{c}\~{o}es de estruturas categoriais induzidas 
por $(n+1) $-computadas e computadas grupoidais. Introduzimos a no\c{c}\~{a}o de deficiência de grupóides via computadas.
Comparamos, ent\~{a}o, as no\c{c}\~{o}es resultantes com as no\c{c}\~{o}es induzidas por  mónadas junto com medidas finitas de objetos. Em particular, concluimos que 
nossas no\c{c}\~{o}es generalizam as no\c{c}\~{o}es cl\'{a}ssicas. Outras contribui\c{c}ões do artigo consistiram em mostrar que as propriedades universais
 de v\'{a}rias estuturas livremente geradas  
podem ser descritas por coinserções. Come\c{c}amos por relembrar que as categorias livremente geradas s\~{a}o dadas por coinserções de grafos e, ent\~{a}o, introduzimos 
 \textit{icons} de dimens\~{a}o alta e apresentamos as defini\c{c}\~{o}es de $n$-computadas via grafos internos da $2$-categoria  $n\Cat $ de $n$-categorias, $n$-functores e $n$-\textit{icons}.
Nesse caso, mostramos que a $n$-categoria livremente gerada por uma $n$-computada \'{e} a sua coinserção. 
Analogamente, demonstramos que a realização geométrica de um grafo $G$ \'{e} parte de um functor adjunto à esquerda $\F _ {\Top _1}: \grph\to \Top $
definido objeto a objeto pela coinserção topol\'{o}gica. Al\'{e}m disso, como uma ferramenta fundamental para o estudo de
estruturas categoriais finas e localmente finas, apresentamos uma constru\c{c}\~{a}o detalhada de um an\'{a}logo de dimens\~{a}o $2$ de $\F _ {\Top _1}: \grph\to \Top $,
denotado por $\F _ {\Top _2}: \cmp\to \Top$.
No caso de grupos, 
$\F _ {\Top _2}$ formaliza e, portanto, generaliza o diagrama de Lyndon-van Kampen.
Finalizamos o capítulo dando uma constru\c{c}\~{a}o de uma vers\~{a}o em dimens\~{a}o $3$, denotada por $\F _ {\Top _3}$,
que associa um CW-complexo de dimens\~{a}o $3$ para cada $3$-computada.

O Capítulo 3 corresponde ao artigo
\textit{Pseudo-Kan Extensions and descent theory}, em revisão para publicação.
Desenvolvemos e aplicamos resultados sobre pseudomónadas idempotentes,
obtendo teoremas no contexto geral da teoria da descida que, em nossa perspectiva,
é o estudo da imagem de pseudofunctores pseudomonádicos. Depois de apresentar uma prova direta
do teorema de Fubini para bilimites ponderados e de construir pseudo-extensões de Kan, aplicamos os
resultados sobre pseudomónadas para provar teoremas sobre comutatividade de
bilimites ponderados. Com o objetivo de usar tais resultados como base para lidar com a teoria da descida clássica
no contexto de \cite{MR1466540}, provamos que a categoria (objeto) de descida de uma categoria (objeto)
pseudocosimplicial é seu bilimite cónico. Usamos, então, esse tratamento formal de comutatividade 
de bilimites para (1) recuperar teoremas clássicos da teoria da descida, (2) provar generalizações
desses teoremas e (3) obter novos resultados de teoria da descida. Nesse sentido, apresentamos provas
formais (de generalizações) de teoremas de transferência, de teoremas de mergulho,
do teorema de Galois e do teorema de B\'{e}nabou-Roubaud. Provamos também um resultado sobre
morfismos de descida efetiva em pseudoprodutos fibrados de categorias e o aplicamos para obter morfismos de descida
efetiva em algumas categorias de categorias enriquecidas.

O Capítulo 4 corresponde
ao artigo \textit{On Biadjoint Triangles}, publicado no 
\textit{Theory and Applications of Categories, Vol 31, N. 9} (2016).
As contribuções principais são os teoremas de triângulos biadjuntos,
os quais possuem muitas aplicações em teoria de categorias de dimensão $2$.
Apresentamos exemplos de aplicações no próprio artigo: provamos
explicitamente o teorema de pseudomonadicidade~\cite{MR1916482},
melhoramos resultados sobre o tratamento $2$-monádico do problema de coerência
de \cite{MR985657, MR1007911, MR1935980}, generalizamos resultados de levantamentos de 
biadjuntos 
e introduzimos o conceito
de \textit{pseudo-extensões de Kan} para, então, construir as pseudo-extensões de Kan
via bilimites ponderados.

O Capítulo 5 corresponde ao
artigo \textit{On lifting of biadjoints and lax algebras}, a ser publicado no 
\textit{Categories and General Algebraic Structures with Applications}. 
O principal tema deste artigo é a demonstração de teoremas de levantamento de biadjuntos,
ao assumir que conseguimos descrever a categoria de morfismos de um domínio em termos 
de (bi)limites ponderados. Esse tratamento, junto com resultados sobre objetos de descida lassos e álgebras
lassas, nos permite obter resultados sobre levantamento de biadjuntos envolvendo sub-$2$-categorias (plenas)
da $2$-categoria de álgebras lassas. Como conseqüencia, concluímos nossos resultados de caracterização
sobre o tratamento $2$-monádico do problema de coerência, via teoremas de triângulos biadjuntos.

\end{abstract}


\tableofcontents

%
%





\mainmatter
\chapter{Introduction}\label{Introducao PhD}
The aim of this chapter is to introduce our main setting, which is $2$-dimensional universal algebra, and to give a glimpse of
 the contributions of this thesis. We start by roughly explaining aspects of the interrelation between pseudomonads and 
descent objects in Section \ref{Overview PhD}. Then, in Section \ref{2dimensionalstructures PhD}, we 
introduce basic notions of $2$-dimensional category theory. We take this opportunity to introduce, 
among other concepts, the notion of colax $\TTTTT$-morphisms of lax $\TTTTT$-algebras, 
which is not introduced elsewhere in this thesis. The notion of colax  $\TTTTT$-morphisms are, then, used in 
Section \ref{Teoria Formal das Monadas e levantamentos PhD}
to relate the formal theory of monads with the problem of lifting of biadjoints studied in Chapter 5~\cite{2016arXiv160703087L}. 
We also use the concept
of colax $\TTTTT$-morphisms to talk about doctrinal adjuntion
in Section \ref{Doctrinal Adjunction PhD}, which is a brief exposition of the main theorem of \cite{MR0360749} focusing on the Beck-Chevalley condition as used in our work on the B\'{e}nabou-Roubaud Theorem in Chapter 3.

Sections \ref{Capitulo2 PhD}, \ref{Capitulo3 PhD} and \ref{Capitulo4 PhD} are dedicated to briefly describe elements of the contributions of the  
Chapters 2, 3, 4 and 5, which are respectively the papers \cite{2017arXiv170404474L}, \cite{2016arXiv160604999L}, \cite{MR3491845} and \cite{2016arXiv160703087L}. Finally, the last section is an application of the biadjoint triangles of Chapter 4  in the 
context of exponentiable objects within bicategory theory: we prove that, under suitable hypothesis, a pseudocoalgebra is (pseudo)exponentiable whenever the underlying object is (pseudo)exponentiable.

\section{Overview: Pseudomonads and the Descent Object}\label{Overview PhD}

In $2$-dimensional category theory, 
by replacing strict conditions
 (involving commutativity of diagrams) with pseudo or lax ones 
 (involving a  $2$-cell plus coherence) we get important notions and problems. 
 We briefly describe two examples of these notions below: namely,
  \textit{descent object} and \textit{pseudomonad}.

Firstly, to give an idea of the role of the descent object in $2$-dimensional universal algebra,
 it is useful to make an analogy with the equalizer: while the equalizer encompasses equality and 
 commutativity of diagrams in $1$-dimensional category theory, the descent object and its variations encompass
$2$-dimensional coherence: structure ($2$-cell) plus coherence. 
 
One obvious example of the importance of the descent object is within \textit{descent theory}, 
as introduced by Grothendieck, initially motivated by the problem of
understanding the image of functors induced by fibrations. This theory features a $2$-dimensional
analogue of the sheaf condition: the (strict) gluing condition, 
given by an equalizer of sets, is replaced by 
the descent condition, 
given by a \textit{descent object} of a diagram of categories. 
 
Secondly, analogously to the case of 
monad theory in $1$-dimensional universal algebra, \textit{pseudomonad theory} 
 encompasses aspects of 
$2$-dimensional universal algebra, being useful to study many 
important aspects of $2$-dimensional category theory. Again, in the definition of 
\textit{pseudomonad}, the commutative diagrams of the definition of monad are 
replaced by invertible $2$-cells plus coherence.
In this theory, then, we have $2$-dimensional versions of the features of monad theory. For instance,
adjunctions are replaced by biadjunctions, and we have an Eilenberg-Moore Factorization provided that
we consider the $2$-category of pseudoalgebras, pseudomorphisms and algebra transformations, as it is shown in Section 5 of Chapter 4~\cite{MR3491845}.

The main topic of this thesis is the study 
of $2$-dimensional categorical structures, mostly related with descent theory 
and pseudomonad theory, with applications to $1$-dimensional category theory. For instance:

\begin{itemize}\renewcommand\labelitemi{--}
\item The contributions of Chapter 2  in this context are within the study of freely generated and finitely presented categorical structures. In particular, 
for instance, we deal with presentations of
domain $2$-categories related to the universal property of descent objects. More precisely, presentations of the inclusion of the $2$-categories such that
Kan extensions along such inclusion gives the (strict) descent object;
\item In Chapter 3, we develop an abstract perspective of descent theory in which the fundamental problem 
is the existence of pseudoalgebra structures over objects: more precisely, the image of pseudomonadic pseudofunctors that induce
idempotent pseudomonads. Having this goal, we develop some aspects of biadjoint triangles and lifting of pseudoalgebra structures
involving pseudofunctors that induce idempotent pseudomonads and apply it to get results on commutativity of bilimits. We finish the article, then, 
 applying our perspective to the classical context of \cite{MR1285884, MR1466540};
\item As a (strict) morphism of algebras is given by a morphism plus the commutativity of a diagram, a pseudomorphism between pseudoalgebras
is given by a morphism, an invertible $2$-cell plus coherence. In Chapters 4 and 5, we show that the coherence aspects of pseudomonad theory are encompassed by descent objects and their variations.
More precisely, we show that the category of (lax-)(pseudo)morphisms between (lax-)(pseudo)algebras is given by (lax-)descent objects.
As it is proven and explained in Chapter 5, with these results on the category of (lax-)(pseudo)morphisms, we can prove biadjoint triangle theorems and results on lifting of biadjoints.
\end{itemize}

\section{2-Dimensional Categorical Structures}\label{2dimensionalstructures PhD}

Two of the most fundamental notions of $2$-dimensional categorical structures are those of
\textit{double category} and 
$2$-\textit{category}, both introduced by Ehresmann~\cite{MR0213410}. 
The former is an example of an internal category (introduced in \cite{MR0213410}), while the 
latter is an example of 
enriched category (as introduced in \cite{MR0225841}).

The study of the dichotomy between the 
theory of enriched categories and internal categories, including
 unification theories, is still of much interest. For instance, within the more general setting of  generalized multicategories, we have the introduction of 
$(T,V)$-categories~\cite{MR1957813} (which generalizes enriched categories), $T$-categories~\cite{MR0308236, MR1758246} (which generalizes internal categories) and
 possible unification theories~\cite{MR2770076, MR3337078}. 

Since there is no definitive general framework and the approaches mentioned above are
focused on the examples related to multicategories, we do not follow any of them. 
Instead, we follow the basic idea that internal categories and
enriched categories can be seen as monads in suitable \textit{bicategories}~\cite{MR0220789, MR707614}.

For simplicity, we use the concept of enriched graphs. This is given in Definition 7.1 of Chapter 2~\cite{2017arXiv170404474L},
but we only need to recall that, given a category $V$,  a \textit{$V$-enriched graph $G$} is a 
collection of objects $G(\mathsf{0}) = G_\mathsf{0} $ 
endowed with one object $G(A,B) $ of $V$ for each ordered pair of objects $(A,B) $  of $G_ \mathsf{0} $.

\begin{defi}[Bicategory~\cite{MR0220789}]
A bicategory is a $\CAT $-enriched graph $\bbb  $ endowed with:
\begin{itemize}\renewcommand\labelitemi{--}
 \item Identities: a functor $\mathtt{I}_A: \mathsf{1}\to \bbb (A,A) $ for each object $A$ of $\bbb $;
 \item Composition: a functor, called composition,  
 $\circ = \circ _ {ABC} : \bbb (B,C)\times \bbb(A,B)\to \bbb (A,C) $
for each ordered triple $(A,B,C) $ of objects of $\bbb $; 
 \item  Associativity: natural isomorphisms 
 $$\mathfrak{a}_{ABCD}:  
 \circ _ {ABD} \left( \circ _ {BCD}\times \Id _{\bbb (A,B) }\right)\Rightarrow   
  \circ _ {ACD} \left( \Id _{\bbb (C,D) }\times \circ _ {ABC} \right);
 $$
for every quadruplet $(A, B, C, D) $ of objects of $\bbb $;
 \item Action of Identity: natural isomorphisms 
 $$\mathfrak{e} _{AB}: \circ _ {ABB}\left(\mathtt{I}_B\times \Id  _ {\bbb (A,B) }\right)\Rightarrow \textrm{pro} ^\mathfrak{e}_ {\bbb (A,B) },  $$
 $$ \mathfrak{d}_{AB}:  \circ _ {AAB}\left( \Id _ {\bbb (A,B) }\times \mathtt{I}_A \right)\Rightarrow \textrm{pro}
^\mathfrak{d} _ {\bbb (A,B) },  $$
in which $\textrm{pro}
^\mathfrak{d} _ {\bbb (A,B) } : \bbb (A,B)\times \mathsf{1}\to \bbb (A,B)$ and 
$\textrm{pro} ^\mathfrak{e}_ {\bbb (A,B) }: 1\times \bbb (A,B)\to \bbb (A,B)$ are the invertible projections, for each 
pair $(A, B ) $ of objects in $\bbb $;
\end{itemize}
such that the diagrams
$$\xymatrix@R=6em{
 \circ _ {ABE} \left( \circ _ {BCE}\times \Id _{\bbb (A,B) }\right)\left( \circ _ {CDE}\times \Id _ {\bbb (A, B, C) }\right)
\ar[r]^-{\mathfrak{a}_{ABCE}\ast \id } 
\ar[d]_-{\id _ {{}_{ \circ _ {ABE}}}\ast \left(\mathfrak{a}_{ABCE}\times \id _ {{}_{\Id _{\bbb (A,B) }}} \right) }
&
 \circ _ {ACE} \left( \Id _{\bbb (C,E) }\times \circ _ {ABC}\right)\left( \circ _ {CDE}\times \Id _ {\bbb (A, B, C) }\right)
\ar[d]^-{\mathfrak{a}_{ACDE}\ast \id _ {{}_{\left( \Id _{\bbb (C,D,E) }\times \circ _ {ABC}\right)}}  }
\\
\circ _ {ABE} \left( \circ _ {BDE}\times \Id _{\bbb (A,B) }\right)\left( \Id\times\circ _ {BCD}\times \Id\right)
\ar[rd]_-{\mathfrak{a}_{ABCE}\ast \id}
&
\circ _ {ADE} \left( \Id _{\bbb (D,E) }\times \circ _ {ACD}\right)\left( \Id _ {\bbb(C,D,E) }\times \circ _ {ABC }\right)
\\
&
\circ _ {ADE} \left( \Id _{\bbb (D,E) }\times \circ _ {ABD}\right)\left( \Id\times\circ _ {BCD}\times \Id\right)
\ar[u]_-{\id _ {{}_{\circ _ {ADE}}}\ast\left( \id _ {{}_{\Id _{\bbb (D,E) }}} \times\mathfrak{a}_{ABCE} \right) }
&
}$$
$$\xymatrix@R=6em{
\circ _ {ABC} \left( \circ _ {BBC}\times \Id _{\bbb (A,B) }\right)\left( \Id\times \mathtt{I}_B\times \Id\right) 
\ar[r]^-{\mathfrak{a}_{ABBC}\ast\id }
\ar[rd]|-{\id_{{}_{\circ _ {ABC}}}\ast \left(\mathfrak{d}_{BC}\times \id  \right) }
&
\circ _ {ABC} \left( \Id _{\bbb (B, C) }\times \circ _ {ABB}\right)\left( \Id\times \mathtt{I}_B\times \Id\right)
\ar[d]^-{\id _ {{}_{\circ _ {ABC}}}\ast \left(\id\times \mathfrak{e} _{AB} \right) } 
\\
&
\circ _{ABC}\,\,\left(\textrm{pro}_{(A,B,C)}\right)
}$$
commute for every quintuple $(A,B,C,D,E)$ of objects in $\bbb $, in which $\Id _ {\bbb (A, B, C) }:=\Id _{\bbb (B, C)\times\bbb (A,B) }$,
$\textrm{pro}_ {(A,B,C)}: \bbb (B, C)\times \mathsf{1}\times \bbb (A,B)\to \bbb (B, C)\times  \bbb (A,B)$ is the invertible projection and
  the omitted subscripts of the identities are the obvious ones.

For simplicity, assuming that the structures are implicit, we denote such a bicategory by 
$(\bbb , \circ , \mathtt{I}, \mathfrak{a}, \mathfrak{e}, \mathfrak{d} ) $
or just by $\bbb $.	
For each pair $(A,B)$ of objects of a bicategory $\bbb $, if $f$ is an object of the 
category $\bbb (A,B) $, $f$ is called an $1$-cell of $\bbb $ and it is denoted
by $f: A\to B $. A morphism $\alpha : f\Rightarrow g $ of $\bbb (A,B) $ 
is called a $2$-cell of $\bbb $. 
\end{defi}

\begin{rem}
In order to take advantage of the context of introducing monads, 
internal categories, double categories and enriched categories,  we define $2$-categories via enriched categories below. However, a brief and obvious definition of $2$-category is that of a \textit{strict bicategory}. More precisely, a \textit{$2$-category} is a bicategory such that its natural isomorphisms are identities. We assume this definition herein.
\end{rem}

\begin{rem}
Since the work of this thesis is mainly within 
the tricategory $2\textrm{-}\CAT $ of $2$-categories, pseudofunctors and pseudonatural 
transformations (as defined in Section 2 of Chapter 4~\cite{MR3491845}),  the results and definitions on 
$2$-dimensional category theory of this thesis are within the general setting of bicategories
up to minor trivial adaptations. Specially in this section, since we should consider the bicategories of Definitions \ref{Definicao Matrizes PhD} and \ref{Definicao Spans PhD}, we freely assume these adaptations.
\end{rem}

\begin{rem}
In this chapter, we do not give any further comment on size issues. In this direction, when necessary, 
we implicitly make similar assumptions to those given in Section 1 of Chapter 2~\cite{2017arXiv170404474L} or Section 1 of Chapter 5.
\end{rem}

Given a bicategory $\bbb $, there are two main duals of $\bbb $ which give rise to four duals, including $\bbb $ itself. The first dual, denoted by $\bbb ^{\op} $,
comes from getting the dual of the underlying category of $\bbb $ (that is to say, the opposite w.r.t. $1$-cells), while the other dual, denoted by $\bbb ^\co $, is obtained from getting the duals of the hom-categories (that is to say, the opposite w.r.t. $2$-cells). Then, we have $\bbb $ itself and $\bbb ^{\coop}:= \left(\bbb ^{\op }\right) ^\co \cong \left(\bbb ^{\co }\right) ^\op $.

\begin{rem}\label{Trifunctores PhD}
We do not define tricategories~\cite{MR1261589}, but we give some independent remarks. For instance, as observed in Section 4.2, $2\textrm{-}\CAT $
is a tricategory and, specially in the present section, we consider the tricategory $\BICAT $ of bicategories, pseudofunctors, pseudonatural transformations and modifications as well. The dualizations mentioned above define invertible trifunctors:
$$ (-)^\op :  \BICAT  \cong  \BICAT ^\co , \quad (-)^\co : \BICAT \cong  \BICAT ^\tco , \quad  (-)^\coop : \BICAT \cong  \BICAT ^{\co\tco}, $$ 
$$ (-)^\op : 2\textrm{-}\CAT  \cong  2\textrm{-}\CAT ^\co , \quad (-)^\co : 2\textrm{-}\CAT \cong  2\textrm{-}\CAT ^\tco , \quad  (-)^\coop : 2\textrm{-}\CAT \cong  2\textrm{-}\CAT ^{\co\tco}, $$
in which $\mathfrak{T} ^\tco $ denotes the dual of the tricategory $\mathfrak{T}$ obtained from reversing the $3$-cells, and $\mathfrak{T} ^{\co\tco} := \left(\mathfrak{T} ^\tco\right) ^\co $. These isomorphisms are $2$-dimensional analogues of the invertible $2$-functor $(-) ^\op : \CAT\cong \CAT ^\co $.
\end{rem}

Given a bicategory $\bbb $, we can consider the bicategory of monads of $\bbb $. This was introduced in 
\cite{MR0347936, MR0299653} taking the point of \cite{MR0220789} that monads in $\bbb $ are given by lax functors between the terminal bicategory
$\mathsf{1} $ and $\bbb$. Herein, we introduce monads via lax algebras of the identity pseudomonad. This perspective takes advantage of the concepts 
introduced in Chapter 5~\cite{2016arXiv160703087L} and it gives a shortcut to understand the role of lifting of biadjoints in the formal theory of monads, which is sketched in Section \ref{Teoria Formal das Monadas e levantamentos PhD}.
It should be observed that our viewpoint also has connections with the approach of \cite{MR1957813}  to introduce $(T,V)$-categories.

We assume the definition of pseudomonads of Section 4 of Chapter 5 (or Definition 5.1 of Chapter 4~\cite{MR3491845} of pseudocomonads) and Definition 4.1 of Chapter 5~\cite{2016arXiv160703087L} of the bicategory of lax algebras and lax morphisms.  Within this context, it is easy to verify that:

\begin{lem}
The identity pseudofunctor $\Id _ {{}_{\bbb }}: \bbb\to\bbb $, with identities $2$-natural transformations and modifications, gives
a $2$-monad and a $2$-comonad on $\bbb $. 
\end{lem}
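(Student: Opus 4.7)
The plan is to exhibit the required data explicitly and observe that all coherence axioms hold trivially because every component is an identity. Since $\Id_{{}_{\bbb}} \circ \Id_{{}_{\bbb}} = \Id_{{}_{\bbb}}$ as pseudofunctors, I can take the multiplication $\mathfrak{m}: \Id_{{}_{\bbb}}\circ\Id_{{}_{\bbb}} \Rightarrow \Id_{{}_{\bbb}}$ and the unit $\mathfrak{u}: \Id_{{}_{\bbb}}\Rightarrow \Id_{{}_{\bbb}}$ to be the identity $2$-natural transformations, and the associator and left/right unitor modifications to be identity modifications.

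First I would verify that this data actually satisfies the definition of a pseudomonad as recalled from Section~4 of Chapter~5. The two coherence axioms for a pseudomonad involve pasting diagrams of the associator and unitors; since in this case every $2$-natural transformation involved is an identity and every modification is an identity, both pasting diagrams reduce to the trivial equality $\id = \id$. Moreover, since the structural modifications are invertible (they are identities), we indeed obtain not merely a pseudomonad but a $2$-monad, i.e.\ a pseudomonad whose associator and unitors are identities.

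For the $2$-comonad statement, I would invoke the duality recalled in Remark~\ref{Trifunctores PhD}: a $2$-comonad on $\bbb$ is a $2$-monad on $\bbb^{\co}$ (or $\bbb^{\coop}$, depending on convention), and under the invertible trifunctor $(-)^{\co}$ the identity pseudofunctor is sent to the identity pseudofunctor of $\bbb^{\co}$, with identities sent to identities. Hence the first half of the argument transports verbatim, giving the comultiplication and counit as identity $2$-natural transformations with identity modifications for coassociator and counitors.

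There is no real obstacle here; the only thing to be careful about is bookkeeping, namely checking that the identity $2$-natural transformation $\Id_{{}_{\bbb}}\circ\Id_{{}_{\bbb}}\Rightarrow \Id_{{}_{\bbb}}$ is indeed well-typed (which it is, since source and target pseudofunctors coincide on the nose), and that the chosen identity modifications have the correct boundaries demanded by the pseudomonad axioms. This last verification is immediate from the fact that whiskering identity $2$-natural transformations with pseudofunctors produces identity $2$-natural transformations.
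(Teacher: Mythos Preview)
Your proposal is correct. The paper does not supply a proof at all; it introduces the lemma with ``it is easy to verify that:'' and leaves the verification to the reader, so your explicit unpacking of the identity data and the observation that every coherence axiom degenerates to $\id = \id$ is exactly the intended (omitted) argument.
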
 

\begin{defi}[Bicategory of Monads]
The bicategory of monads of a bicategory $\bbb$, denoted by $\mathsf{Mnd}(\bbb )$, is the bicategory of lax $\Id _ {{}_{\bbb }}$-algebras 
$\mathsf{Lax}\textrm{-}\Id _ {{}_{\bbb }}\textrm{-}\Alg _\ell  $.  
\end{defi}

Following the notation of lax algebras of Definition 4.1 of Chapter 5~\cite{2016arXiv160703087L}, we have that a monad in a $2$-category $\bbb$ is defined by a quadruplet $\mathtt{z}= (Z, \alg _ {{}_{\mathtt{z}}}, \overline{{\underline{\mathtt{z}}}}, \overline{{\underline{\mathtt{z}}}}_0 ) $ satisfying the condition of lax $\Id _ {{}_{\bbb }}$-algebra,
in which $Z$ is an object of $\bbb $,  $\alg _ {{}_{\mathtt{z}}}: Z\to Z $ is an endomorphism
of $\bbb $,  $\overline{{\underline{\mathtt{z}}}}: \alg _ {{}_{\mathtt{z}}}\alg _ {{}_{\mathtt{z}}}\Rightarrow \alg _ {{}_{\mathtt{z}}} $ 
is a $2$-cell of $\bbb$, called the multiplication of $\mathtt{z} $, 
and $\overline{{\underline{\mathtt{z}}}}_0 : \id _ {{}_{Z}}\Rightarrow  \alg _ {{}_{\mathtt{z}}} $ is a $2$-cell of $\bbb $, called the unit of the monad 
$\mathtt{z}$. In this case, we say that $\mathtt{z}$ is a monad on $Z$.

In order to introduce enriched categories and internal categories as monads, we define bicategories constructed from a suitable 
categorical structure $V$. These are the bicategory of matrices and the bicategory of spans, denoted respectively by  $V$-$\mathrm{Mat}$  and $\textit{Span}(V)$.

\begin{rem}\label{Suspension Categorias Monoidais PhD Capitulo 1}
The bicategory of matrices $V$-$\mathrm{Mat}$ is constructed from a monoidal category $V$. 
A \textit{monoidal category} is a bicategory $(\bbb , \circ , \mathtt{I}, \mathfrak{a}, \mathfrak{e}, \mathfrak{d} ) $ which has only one object $\triangle $. The composition is called, in this case, the \textit{monoidal product/tensor}. 
The \textit{underlying category} of a monoidal category is the hom-category $ \bbb (\triangle , \triangle ) $. The objects and morphisms of a monoidal
category are the objects and morphisms of its underlying category. 

Since we are talking about a bicategory with only one object $\triangle $, we actually have
only one natural isomorphism of associativity, one identity and two natural isomorphisms of action of identity. That is to say, given a monoidal
category $(\bbb , \circ , \mathtt{I}, \mathfrak{a}, \mathfrak{e}, \mathfrak{d} ) $, we denote 
 $\mathfrak{a}:=\mathfrak{a} _ {\triangle \triangle \triangle } $, $\mathfrak{d}:=\mathfrak{d} _ {\triangle  \triangle }$, $\mathfrak{e}:=\mathfrak{e} _ {\triangle  \triangle }$
and $I:= \mathtt{I} _ \triangle $. Therefore such a monoidal category  is given by a sextuple $(V, \otimes , I, \mathfrak{a}, \mathfrak{e}, \mathfrak{d} ) $ in which
$ V:= \bbb (\triangle , \triangle ) $ is the underlying category, $\otimes $ is the monoidal product, $I$ is the identity and $$\mathfrak{a}: (-\otimes -)\otimes -\longrightarrow -\otimes (- \otimes - ),\, \mathfrak{e}: (I\otimes -)\longrightarrow \id _{{}_{V}} ,\, \mathfrak{d}: (-\otimes I)\longrightarrow \id _ {{}_{V}} $$ are the respective natural isomorphisms satisfying the axioms of a bicategory with the only object $\triangle $.

For simplicity, letting the natural isomorphisms implicit, a monoidal category is usually denoted by 
$(V , \otimes , I) $, while $(\otimes , I ) $ together with the natural isomorphisms $\mathfrak{a}$, $\mathfrak{e}$, $\mathfrak{d}$  is called
a monoidal structure for $V$. When the monoidal structure is implicit in the context, we denote the monoidal category $(V , \otimes , I) $ by $V$.

A \textit{symmetric monoidal category} is a monoidal category $(V, \otimes , I ) $ endowed with a natural isomorphism, called braiding, $\mathfrak{b}  : -\otimes - \to -\otimes ^\op - $ in which $\otimes ^\op $ is the composition of the opposite of the bicategory that corresponds to $(V, \otimes , I ) $,
such that
$$
  \xymatrix@=5.5em{
   (A \otimes B) \otimes C 
	\ar[d]|-{\mathfrak{b} _ {(A,B)} \otimes \id _ {{}_{C}} }
   \ar[r]|-{\mathfrak{a}_{(A,B,C)}}&
   A \otimes (B \otimes C)
   \ar[r]|-{\mathfrak{b} _ {(A,B \otimes C)}  }  &
   (B \otimes C) \otimes A
	\ar[d]|-{\mathfrak{a}_{(B,C,A)}}
   \\
   (B \otimes A ) \otimes C
   \ar[r]|-{\mathfrak{a}_{(B,A,C)}}
	&
   B \otimes (A \otimes C)\ar[r]|-{\id _ {{}_{B}} \otimes \mathfrak{b}_{A,C}}
	&
   B \otimes (C \otimes A)
  }
$$
$$\xymatrix@=4.5em{
(A\otimes B)
\ar[r]|-{\mathfrak{b} _ {(A, B)} }
&
( B\otimes A )
\ar[ld]|-{\mathfrak{b} _ {( B, A)} }
\\
A\otimes B\ar@{=}[u]
&
}$$
commute for every triple $(A,B,C) $ of objects of $V$. A \textit{symmetric monoidal closed category} $(V, \otimes , I ) $ is a symmetric monoidal category such that every object of $V$ is exponentiable w.r.t. the monoidal product $\otimes $.
In other words, this means that the representable functor $A\otimes - : V\to V $ has a right adjoint for every $A$ of $V$.

If a category $V$ has finite products, it has a natural symmetric monoidal structure called \textit{cartesian monoidal structure}. That is to say $\otimes := \times $
and the unit is given by the terminal object $1 $ of $V$. The natural isomorphisms for associativity, actions of the identity and braiding are given by the
universal property of the product. A category with finite products endowed with this monoidal structure is called a \textit{monoidal cartesian category} or just
a \textit{cartesian category}, and it is denoted by $(V, \times , 1 ) $.

It would be appropriate
to define monoidal categories via $2$-dimensional monad theory, as, for instance, it is shown in Remark 5.4.3~\cite{2016arXiv160703087L}.
But our interest herein is mostly restricted to the case of cartesian closed categories.  Even our result on effective descent morphisms of enriched categories, which is 
Theorem 9.11 of Chapter 3~\cite{2016arXiv160604999L}, is given within the context of cartesian closed categories.  For this reason, we avoid giving further comments on general aspects
of monoidal categories. We refer to \cite{MR1712872, MR3307165, MR2094071} for the basics on such general aspects.
\end{rem}

\begin{defi}[Bicategory of Matrices]\label{Definicao Matrizes PhD}
Let $(V, \otimes , I)$ be a symmetric monoidal closed category with finite  coproducts. 
We define  $V\textrm{-}\mathrm{Mat}$ as follows:
\begin{itemize} \renewcommand\labelitemi{--}
\item The objects are the sets;
\item A morphism $M: A\to B$ in $V\textrm{-}\mathrm{Mat}$ is a matrix of objects in $V$, that is to say, 
a functor 
$A\times B  \to V$,
considering $A, B$  as discrete categories;
\item The $2$-cells are natural transformations. In other words, the category of morphisms for a ordered pair $(A,B) $ of sets
is the category of functors and natural transformations $\CAT\left[ A\times B, \Set \right] $;
\item The composition is given by the usual formula of product of matrices. More precisely, given matrices
$ M: A\times B\to V$ and $ N: B\times C\to V$,
the composition is defined by
\begin{eqnarray*}
N\circ M : & A\times C & \to V\\
           & (i,j) & \mapsto  \displaystyle\sum _{k\in B} M(i,k)\otimes N(k, j)
\end{eqnarray*}
in which $\sum $ denotes the coproduct;
\item For each set $A$, the identity on $A$ is the matrix
\begin{eqnarray*}
\id _{{}_{A}} : & A\times A & \to V\\
           & (i,j) & \mapsto  \begin{cases}
    I, & \text{if } i=j\\
    0,              & \text{otherwise}
\end{cases}
\end{eqnarray*}
in which $0$ is the initial object;
\item The natural isomorphisms for associativity and actions of identities are given by the universal property of coproducts, the isomorphisms of the preservation of the coproducts by $\otimes $ and the isomorphisms of the monoidal
structure.
\end{itemize}
\end{defi}

In order to define the bicategory of spans $\textit{Span}(V)$ of a category with pullbacks $V$, we denote by
$\textit{span} $ the category with $3$ objects ($\mathrm{0}$, $\mathrm{1}$ and $\mathrm{2}$) whose nontrivial morphisms are given by
$$\xymatrix{
\mathrm{0}
&
\mathrm{2}\ar[l]_-{d^1}\ar[r]^-{d^0}
&
\mathrm{1}.
} $$

\begin{defi}[Bicategory of Spans]\label{Definicao Spans PhD}
Let $V$ be a category with pullbacks. The bicategory $\textit{Span}(V)$ is defined by
\begin{itemize} \renewcommand\labelitemi{--}
\item The objects are the objects of $V$;
\item A morphism $M: A\to B$ in $\textit{Span}(V)$ is a span in $V$ between $A$ and $B$, that is to say, 
a functor 
$M: \textit{span}  \to V$,
such that $M(\mathrm{0} ) = A $ and $M(\mathrm{1} ) = B $;
\item A $2$-cell $f $ between two morphisms $M, K : A\to B $ is a natural transformation $f : M\longrightarrow K$
such that $f_{{}_{\mathrm{0}}} = \id _ {{}_{A}} $ and $f_{{}_{\mathrm{1}}} = \id _ {{}_{B}} $. That is to say,
it is a morphism $f: M(\mathrm{2} )\to K(\mathrm{2} ) $ such that
$$\xymatrix@=3.5em{
&
M(\mathrm{2} )
\ar[ld]|-{M(d^1)}\ar[rd]|-{M(d^0)}
\ar[dd]|-{f}
&
\\
A
&
&
B
\\
&
K(\mathrm{2} )
\ar[lu]|-{K(d^1)}\ar[ru]|-{K(d^0)}
&
}$$
commutes in $V$.
\item The composition is given by the pullback. More precisely, given a span
$ M: \textit{span} \to V$ between $A$ and $B$ and a span $ N: \textit{span}\to V$ between $B$ and $C$,
the composition is defined by the span
$$\xymatrix@=3em{
&
&
\text{P}(M, N )\ar[ld]\ar[rd]
&
&
\\
&
M(\mathrm{2})\ar[ld]|-{M(d^1)}\ar[rd]|-{M(d^0)}
&
&
N(\mathrm{2})\ar[ld]|-{N(d^1)}\ar[rd]|-{N(d^0)}
&
\\
A
&
&
B
&
&
C
}
$$
between the objects $A$ and $C$, in which $\text{P}(M, N )$ is the pullback $M(\mathrm{2} )\times _ {(M(d^0 ),\,  N(d ^1 )) } N(\mathrm{2} )$  of $M(d^0 )$ along $N(d ^1 )$ and the unlabeled arrows are the morphisms induced by the pullback;
\item For each set $A$, the identity on $A$ is the span $\textit{span} \to V$ constantly equal to $A$, taking the morphisms of the domain to the identity on $A$;
\item The natural isomorphisms for associativity and actions of identities are given by the universal property of pullbacks.
\end{itemize}

\end{defi}

Assuming that $V$ is a symmetric monoidal closed category with coproducts, a (small) category enriched in $V$ is a monad of the bicategory $V\textrm{-}\mathrm{Mat}$, while, if $V'$ has pullbacks, an internal category of $V' $ is a monad of $\textit{Span}(V')$. But $1$-cells and $2$-cells of  $\mathsf{Mnd}(V\textrm{-}\mathrm{Mat} )$
and $\mathsf{Mnd}(\textit{Span}(V'))$ do not respectively coincide with what should be $1$-cells and $2$-cells of the
$2$-category of enriched categories and the $2$-category of internal categories. In order to get the appropriate notion of 
$1$-cells, firstly we need to consider 
co-morphisms of monads and, secondly, we need to consider proarrow equipments. 

We introduce co-morphisms between monads via colax morphisms between lax algebras. For short, we use the
notation introduced in Definition 4.1 of Chapter 5~\cite{2016arXiv160703087L} and we define the $2$-category of lax algebras and colax morphisms w.r.t. a pseudomonad on a $2$-category, assuming, then, the minor adaptations to $\BICAT$  when necessary.

\begin{defi}[Colax morphisms of lax algebras]\label{LAXALGEBRASCOLAX}
Let $\TTTTT = (\TTTTT,  m  , \eta , \mu,   \iota, \tau )$ be a pseudomonad on a $2$-category $\bbb $. We define the $2$-category
of lax $\TTTTT$-algebras and colax $\TTTTT$-morphisms, denoted by $\mathsf{Lax}\textrm{-}\TTTTT\textrm{-}\Alg _{\mathsf{c}\ell } $,
as follows:
\begin{enumerate}
\item Objects: \textit{lax $\TTTTT$-algebras} as in Definition .5.4.1~\cite{2016arXiv160703087L}.

\item Morphisms: \textit{colax $\TTTTT$-morphisms} $\mathtt{f}:\mathtt{y}\to\mathtt{z} $ between lax $\TTTTT$-algebras $$\mathtt{y}=(Y, \alg _ {{}_{\mathtt{y}}}, \overline{{\underline{\mathtt{y}}}}, \overline{{\underline{\mathtt{y}}}}_0 ), \mathtt{z}= (Z, \alg _ {{}_{\mathtt{z}}}, \overline{{\underline{\mathtt{z}}}}, \overline{{\underline{\mathtt{z}}}}_0 )$$ are pairs $\mathtt{f} = (f, \left\langle \overline{\mathsf{f}}\right\rangle ) $ in which 
$f: Y\to Z  $
is a morphism in $\bbb $ and
$$\left\langle   \overline{\mathsf{f}}\right\rangle :  f\alg _ {{}_{\mathtt{y}}}\Rightarrow\alg _ {{}_{\mathtt{z}}}\TTTTT (f)    $$  
is a $2$-cell of $\bbb $ such that, defining $\widehat{\TTTTT (\left\langle   \overline{\mathsf{f}}\right\rangle )} : = \tttt ^{-1} _ {{}_{(\alg _ {{}_{\mathtt{z}}})(\TTTTT(f))  }}  \TTTTT (\left\langle   \overline{\mathsf{f}}\right\rangle ) \tttt _ {{}_{(f)(\alg _ {{}_{\mathtt{y}}}) }}
$, the equations
$$\xymatrix@R=3.5em{
&
\TTTTT ^2 Y\ar@{}[d]|-{\xRightarrow{\widehat{\TTTTT (\left\langle   \overline{\mathsf{f}}\right\rangle)}             } }
\ar[ld]_ -{\TTTTT (\alg _ {{}_{\mathtt{y}}}) }\ar[r]^-{\TTTTT ^2(f) }
&
\TTTTT ^2 Z\ar[rd]^-{m _ {{}_{Z}} }\ar[ld]|-{\TTTTT (\alg _ {{}_{\mathtt{z}}}) }\ar@{}[dd]|-{\xRightarrow{\overline{{\underline{\mathtt{z}}} }} }
&
\ar@{}[rdd]|-{=}
&
&
\TTTTT Z\ar[r]^{\alg _ {{}_{\mathtt{z}}}}\ar@{}[dd]|-{\xLeftarrow{ m_{{}_{f}}  }}
&
Z\ar@{}[d]|-{\xLeftarrow{\left\langle\overline{\mathsf{f}}\right\rangle } }
&
\\
\TTTTT Y\ar[r]_{\TTTTT (f) }\ar[rd]_{\alg _ {{}_{\mathtt{y}}}}
&
\TTTTT Z\ar[rd]|-{\alg _ {{}_{\mathtt{z}}} }\ar@{}[d]|-{\xRightarrow{\left\langle   \overline{\mathsf{f}}\right\rangle} }
&
&
\TTTTT Z\ar[ld]^{\alg _ {{}_{\mathtt{z}}} }
&
\TTTTT ^2 Z\ar[ru]^-{m _ {{}_{Z}} }
&
&
\TTTTT Y\ar[lu]|-{\TTTTT (f) }\ar[r]^{\alg _ {{}_{\mathtt{y}}} }\ar@{}[d]|-{\xLeftarrow{\overline{{\underline{\mathtt{y}}} } } }
&
Y\ar[lu]_ {f}
\\
&
Y\ar[r]_{f}
&
Z
&
&
&
\TTTTT ^2Y\ar[ru]|-{m _ {{}_{Y}} }\ar[r]_{\TTTTT (\alg _ {{}_{\mathtt{y}}}) }\ar[lu]^{\TTTTT ^2(f)}
&
\TTTTT Y\ar[ru]_-{\alg _ {{}_{\mathtt{y}}} }
&
}$$
$$\xymatrix@R=3.2em{ 
Z\ar[d]_{\eta _ {{}_{Z}} }\ar@{}[rd]|-{\xLeftarrow{\eta _ {{}_{f}}} }
&
&
Y\ar[ld]|-{\eta _ {{}_{Y}} }\ar@{=}[dd]\ar@{}[rdd]|-{=}\ar[ll]_-{f}
&
&
Z\ar[ld]_{\eta _ {{}_{Z}} }\ar@{=}[dd]
&
\\
\TTTTT Z\ar[d]_-{\alg _ {{}_{\mathtt{z}}} }\ar@{}[rd]|-{\xLeftarrow{\left\langle   \overline{\mathsf{f}}\right\rangle}}
&
\TTTTT Y\ar@{}[r]|-{\xLeftarrow{\overline{{\underline{\mathtt{y}}}}_0} }\ar[rd]|-{\alg _ {{}_{\mathtt{y}}} }\ar[l]^-{\TTTTT (f)}
&
&
\TTTTT Z\ar@{}[r]|-{\xLeftarrow{\overline{{\underline{\mathtt{z}}}}_0 } }\ar[rd]_-{\alg _ {{}_{\mathtt{z}}}}
&
&
\\
Z
&
&
Y\ar[ll]^-{f}
&
&
Z
&
Y\ar[l]_{f}
}$$
hold. The composition of colax $\TTTTT $-morphisms $\mathtt{f}:\mathtt{y}\to\mathtt{z} $ and $\mathtt{g}:\mathtt{x}\to\mathtt{y} $ of lax $\TTTTT $-algebras is denoted by
$\mathtt{f}\mathtt{g} $ and it is defined by the pair $(fg,  \left\langle \overline{\mathsf{f}\mathsf{g}}\right\rangle ) $ in which $\left\langle \overline{\mathsf{f}\mathsf{g}}\right\rangle $ is the $2$-cell defined by
$$\xymatrix@=3.5em{ 
&X\ar[d]_-{g}\ar@{}[rd]|-{\xRightarrow{\left\langle \overline{\mathsf{g}}\right\rangle} }
&
\TTTTT X\ar[l]_-{\alg _ {{}_{\mathtt{x}}}}\ar[d]|-{\TTTTT (g) }\ar@/^4pc/[dd]|-{\TTTTT (fg)}\ar@{{}{ }{}}@/^2pc/[dd]|-{\xRightarrow{\tttt  _ {{}_{(f)(g)  }}} }
\\
\left\langle \overline{\mathsf{f}\mathsf{g}}\right\rangle\ar@{}[r]|-{:=}
&
Y\ar[d]_-{f}\ar@{}[rd]|-{\xRightarrow{\left\langle \overline{\mathsf{f}}\right\rangle} }
&
\TTTTT Y\ar[l]|-{\alg _ {{}_{\mathtt{y}}} }\ar[d]|-{\TTTTT (f) }
\\
&Z
&
\TTTTT Z \ar[l]^-{\alg _ {{}_{\mathtt{z}}}}
} $$
\item $2$-cells: a \textit{$\TTTTT $-transformation}  $\mathfrak{m} : \mathtt{f}\Rightarrow\mathtt{h} $ between lax $\TTTTT $-morphisms $\mathtt{f} = (f,\left\langle   \overline{\mathsf{f}}\right\rangle )$, $\mathtt{h} = (h, \left\langle\overline{\mathsf{h}}\right\rangle )$ is a $2$-cell $\mathfrak{m} : f\Rightarrow h $ in $\bbb $ such that  the equation below holds.
$$\xymatrix@=2.5em{  \TTTTT Y\ar@/_4ex/[dd]_{\TTTTT (h) }
                    \ar@{}[dd]|{\xLeftarrow{\TTTTT(\mathfrak{m}) } }
                    \ar@/^4ex/[dd]^{\TTTTT (f) }
										\ar[rr]^{\alg _ {{}_{\mathtt{y}}} } && 
										 Y\ar[dd]^{f }    
&&
\TTTTT Y \ar[rr]^{ \alg _ {{}_{\mathtt{y}}} }\ar[dd]_-{\TTTTT (h) }  &&
Y\ar@/_5ex/[dd]_{h}
                    \ar@{}[dd]|{\xLeftarrow{\mathfrak{m}  } }
                    \ar@/^5ex/[dd]^{f }
\\
&\ar@{}[r]|{\xLeftarrow{\hskip .2em \left\langle\overline{\mathsf{f}}\right\rangle  \hskip .2em } }   &
 &=& 
&\ar@{}[l]|{\xLeftarrow{\hskip .2em \left\langle \overline{\mathsf{h}}\right\rangle  \hskip .2em } }  & 
\\
 \TTTTT Z\ar[rr]_ {\alg _ {{}_{\mathtt{z}}} } &&  Z
 &&
\TTTTT Z\ar[rr]_ {\alg _ {{}_{\mathtt{z}}} } && Z	 }$$ 
The compositions of $\TTTTT $-transformations are defined in the obvious way and these definitions make $\mathsf{Lax}\textrm{-}\TTTTT\textrm{-}\Alg _ {\mathsf{c}\ell } $ a $2$-category.

\end{enumerate}

\end{defi}

\begin{rem}[Identity on a lax $\TTTTT$-algebra]\label{Identidade de Lax Algebra PhD}
The identities on a lax $\TTTTT $-algebra $\mathtt{y}= (Y, \alg _ {{}_{\mathtt{y}}}, \overline{{\underline{\mathtt{y}}}}, \overline{{\underline{\mathtt{y}}}}_0 )$ 
in $\mathsf{Lax}\textrm{-}\TTTTT\textrm{-}\Alg _{\mathsf{c}\ell } $ and in $\mathsf{Lax}\textrm{-}\TTTTT\textrm{-}\Alg _{\ell } $ are respectively given by
\begin{equation*}
\left(\id _ {{}_{Y}},\, \vcenter{
\xymatrix@=6em{
\TTTTT Y
\ar@/_1.5pc/[d]|-{\TTTTT (\id _ {{}_{Y}}) }
\ar@/^1.5pc/[d]|-{\id _ {{}_{\TTTTT Y }} }
\ar@{}[d]|-{\xLeftarrow{\mathfrak{t}_{{}_{Y}}  }  }
\ar@{{}{ }{}}[dr]|-{=}
\ar[r]|-{  \alg _ {{}_{\mathtt{y}}}  }
&
Y
\ar[d]|-{\id _ {{}_{Y}} }
\\
\TTTTT Y
\ar[r]|-{  \alg _ {{}_{\mathtt{y}}}   }
&
Y
}}\right)
\quad\mbox{and}\quad
\left(\id _ {{}_{Y}},\, \vcenter{
\xymatrix@=6em{
\TTTTT Y
\ar@/_1.5pc/[d]|-{\TTTTT (\id _ {{}_{Y}}) }
\ar@/^1.5pc/[d]|-{\id _ {{}_{\TTTTT Y }} }
\ar@{}[d]|-{\xRightarrow{\mathfrak{t}_{{}_{Y}}^{-1}  }  }
\ar@{{}{ }{}}[dr]|-{=}
\ar[r]|-{  \alg _ {{}_{\mathtt{y}}}  }
&
Y
\ar[d]|-{\id _ {{}_{Y}} }
\\
\TTTTT Y
\ar[r]|-{  \alg _ {{}_{\mathtt{y}}}   }
&
Y
}
}\right).
\end{equation*}
\end{rem}

\begin{rem}[Colax algebras and coalgebras]
By dualizing, we get below the notions of colax algebras, lax coalgebras and colax coalgebras. The dualization $(-) ^\co $ preserves pseudomonads and pseudocomonads, while $(-)^\op $ takes pseudomonads to pseudocomonads and pseudocomonads to pseudomonads. In particular, given a pseudomonad $\TTTTT $ on a bicategory $\bbb $ and a  pseudocomonad $\SSSSS $ on a bicategory $\ccc $, we have that $\TTTTT ^\co $ is a pseudomonad on $\bbb ^\co $ and $\SSSSS ^\op , \SSSSS ^\coop $ are pseudomonads respectively on $\ccc ^\op $ and $\ccc ^\coop $. Therefore we can define: 
\begin{itemize} \renewcommand\labelitemi{--}
\item The bicategories of colax $\TTTTT $-algebras of the pseudomonad $\TTTTT $ (respectively with colax $\TTTTT $-morphisms and lax $\TTTTT $-morphisms):   
$$ \mathsf{Colax}\textrm{-}\TTTTT\textrm{-}\Alg _{\mathsf{c}\ell } := \left(\mathsf{Lax}\textrm{-}\TTTTT ^\co \textrm{-}\Alg _{\ell }\right) ^\co \quad
\mbox{and}\quad 
\mathsf{Colax}\textrm{-}\TTTTT\textrm{-}\Alg _{\ell } := \left(\mathsf{Lax}\textrm{-}\TTTTT ^\co \textrm{-}\Alg _{\mathsf{c}\ell }\right) ^\co; $$
\item The bicategories of lax $\SSSSS $-coalgebras:
$$ \mathsf{Lax}\textrm{-}\SSSSS \textrm{-}\CoAlg _{\ell } := \left(\mathsf{Lax}\textrm{-}\SSSSS ^\op \textrm{-}\Alg _{\ell }\right) ^\op \quad
\mbox{and}\quad  \mathsf{Lax}\textrm{-}\SSSSS \textrm{-}\CoAlg _{\mathsf{c}\ell } := \left(\mathsf{Lax}\textrm{-}\SSSSS ^\op \textrm{-}\Alg _{\mathsf{c}\ell }\right) ^\op;$$
\item The bicategory of colax $\SSSSS $-coalgebras:
$$ \mathsf{Colax}\textrm{-}\SSSSS\textrm{-}\CoAlg _{\mathsf{c}\ell } := \left(\mathsf{Lax}\textrm{-}\SSSSS ^\coop \textrm{-}\Alg _{\ell }\right) ^\coop
\quad
\mbox{and}\quad
 \mathsf{Colax}\textrm{-}\SSSSS\textrm{-}\CoAlg _{\ell } := \left(\mathsf{Lax}\textrm{-}\SSSSS ^\coop \textrm{-}\Alg _{\mathsf{c}\ell }\right) ^\coop .$$
\end{itemize}
\end{rem} 

\begin{rem}\label{Importante Mas Negligenciado PhD}
A colax $\TTTTT $-morphism $\mathtt{f} = (f,  \left\langle \overline{\mathsf{f}}\right\rangle ) $ is a $\TTTTT $-pseudomorphism if $\left\langle   \overline{\mathsf{f}}\right\rangle$ is an invertible $2$-cell. In particular, we have an inclusion $2$-functor  
$\mathsf{Lax}\textrm{-}\TTTTT \textrm{-}\Alg \to \mathsf{Lax}\textrm{-}\TTTTT  \textrm{-}\Alg _{\mathsf{c}\ell }$, in which
$\mathsf{Lax}\textrm{-}\TTTTT\textrm{-}\Alg$ denotes the $2$-category of lax $\TTTTT $-algebras and $\TTTTT $-pseudomorphisms as introduced in Section 4 of Chapter 5~\cite{2016arXiv160703087L}.
\end{rem}

\begin{defi}[Co-bicategory of monads]
The bicategory of monads and co-morphisms of a bicategory $\bbb $, called herein the co-bicategory of monads and denoted by $\mathsf{Mnd}_\co (\bbb )$, 
is the bicategory of lax $\Id _ {{}_{\bbb }}$-algebras and colax $\Id _ {{}_{\bbb }}$-morphisms, that is to say:
$\mathsf{Mnd}_\co (\bbb ):=\mathsf{Lax}\textrm{-}\Id _ {{}_{\bbb }}\textrm{-}\Alg _{\mathsf{c}\ell } $.
\end{defi}

The duals of the bicategories of monads are the bicategories of comonads. More precisely, the bicategories are defined by $\mathsf{CoMnd}(\bbb ):=
\left(\mathsf{Mnd} (\bbb ^\co)\right)^\co$ and $\mathsf{CoMnd}_\co (\bbb ):=
\left(\mathsf{Mnd}_\co (\bbb ^\co)\right)^\co$. 

\begin{lem}
Given a bicategory $\bbb $, 
$$\mathsf{Mnd}_\co (\bbb )\cong \mathsf{Lax}\textrm{-}\Id _ {{}_{\bbb }}\textrm{-}\CoAlg _\ell\cong \left(\mathsf{Mnd} (\bbb ^\op)\right)^\op
\quad\mbox{and}\quad \mathsf{Mnd} (\bbb )\cong \mathsf{Lax}\textrm{-}\Id _ {{}_{\bbb }}\textrm{-}\CoAlg _{\mathsf{c}\ell}. $$
\end{lem}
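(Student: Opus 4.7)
The plan is to reduce both isomorphisms to unfolding the dualization definitions, and then to verify the resulting correspondence level by level. The key observation is that the identity pseudofunctor $\Id_{{}_{\bbb}}$ is simultaneously a pseudomonad and a pseudocomonad on $\bbb$, with $\Id_{{}_{\bbb}}^{\op}$ being literally the identity pseudomonad $\Id_{{}_{\bbb^{\op}}}$ on $\bbb^{\op}$. Substituting this into the definitions of $\CoAlg_{\ell}$ and $\CoAlg_{\mathsf{c}\ell}$ given just before the statement yields
\begin{equation*}
\mathsf{Lax}\textrm{-}\Id_{{}_{\bbb}}\textrm{-}\CoAlg_{\ell} = \left(\mathsf{Mnd}(\bbb^{\op})\right)^{\op} \quad\text{and}\quad \mathsf{Lax}\textrm{-}\Id_{{}_{\bbb}}\textrm{-}\CoAlg_{\mathsf{c}\ell} = \left(\mathsf{Mnd}_{\co}(\bbb^{\op})\right)^{\op},
\end{equation*}
so the lemma reduces to exhibiting $2$-isomorphisms $\mathsf{Mnd}_{\co}(\bbb) \cong \left(\mathsf{Mnd}(\bbb^{\op})\right)^{\op}$ and $\mathsf{Mnd}(\bbb) \cong \left(\mathsf{Mnd}_{\co}(\bbb^{\op})\right)^{\op}$.

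I would construct the first of these explicitly. Objects agree: monads are built from endomorphisms, the composite $tt$ is the same in $\bbb$ and $\bbb^{\op}$, and the multiplication and unit $2$-cells transfer verbatim. For $1$-cells, a colax $\Id_{{}_{\bbb}}$-morphism $(f, \langle\overline{\mathsf{f}}\rangle)$ from $\mathtt{y}$ to $\mathtt{z}$ consists of $f \colon Y \to Z$ together with $\langle\overline{\mathsf{f}}\rangle \colon f\alg_{\mathtt{y}} \Rightarrow \alg_{\mathtt{z}} f$ in $\bbb$, while a $1$-cell of $\left(\mathsf{Mnd}(\bbb^{\op})\right)^{\op}$ from $\mathtt{y}$ to $\mathtt{z}$ is by definition a lax morphism of $\mathsf{Mnd}(\bbb^{\op})$ from $\mathtt{z}$ to $\mathtt{y}$, namely $f \colon Z \to Y$ in $\bbb^{\op}$ (equivalently $f \colon Y \to Z$ in $\bbb$) with a $2$-cell $\overline{f} \colon \alg_{\mathtt{y}} \cdot_{\bbb^{\op}} f \Rightarrow f \cdot_{\bbb^{\op}} \alg_{\mathtt{z}}$. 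Applying the identity $g \cdot_{\bbb^{\op}} h = h \cdot_{\bbb} g$ converts $\overline{f}$ into a $2$-cell $f\alg_{\mathtt{y}} \Rightarrow \alg_{\mathtt{z}} f$ in $\bbb$, matching $\langle\overline{\mathsf{f}}\rangle$ on the nose. Since $(-)^{\op}$ leaves hom-categories unchanged beyond relabeling source and target, the $\TTTTT$-transformations coincide as well. The isomorphism $\mathsf{Mnd}(\bbb) \cong \left(\mathsf{Mnd}_{\co}(\bbb^{\op})\right)^{\op}$ follows by the same template with the roles of lax and colax interchanged: a lax $2$-cell $\alg_{\mathtt{z}} f \Rightarrow f \alg_{\mathtt{y}}$ in $\bbb$ corresponds under the same bookkeeping to a colax $2$-cell $f \cdot_{\bbb^{\op}} \alg_{\mathtt{z}} \Rightarrow \alg_{\mathtt{y}} \cdot_{\bbb^{\op}} f$ in $\bbb^{\op}$.

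The main obstacle is verifying that the coherence axioms are preserved by this dualization, namely that the pentagon-like axiom for multiplication and the triangle-like axiom for the unit of a colax $\Id_{{}_{\bbb}}$-morphism in Definition \ref{LAXALGEBRASCOLAX} correspond to the respective axioms of a lax morphism in $\mathsf{Mnd}(\bbb^{\op})$. This is purely formal: the axioms involve only pasting diagrams built from the algebra structure and whiskerings of $f$, and pasting is stable under the simultaneous reversal of $1$-cells and preservation of $2$-cells that defines $(-)^{\op}$; every instance of $\cdot_{\bbb^{\op}}$ flips to the opposite composition in $\bbb$, turning one diagram into the other on the nose.
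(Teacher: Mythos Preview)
Your proposal is correct and matches the paper's implicit approach: the paper states this lemma without proof, treating it as immediate from the dualization definitions given just before, and your argument is precisely the explicit unfolding of those definitions (observing that $\Id_{{}_{\bbb}}^{\op} = \Id_{{}_{\bbb^{\op}}}$, applying the definitions of $\CoAlg_\ell$ and $\CoAlg_{\mathsf{c}\ell}$, and then checking the remaining isomorphisms level by level). There is nothing to add.
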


Herein, we actually do not need to give the full definition of proarrow equipment introduced in \cite{MR675339, MR794752}. Instead, we can give
a much less structured version:

\begin{defi}[Proarrow equipment]
A proarrow equipment on a $2$-category $\bbb _ 0 $ is a pseudofunctor $P: \bbb _ 0\to\bbb _1 $ which is the identity on objects and locally fully faithful. 
\end{defi}

Clearly, the category of proarrow equipments is a subcategory of the category of morphisms of the category of bicategories and pseudofunctors.
Similarly, in the context of Remark \ref{Trifunctores PhD},
 there is a tricategory of proarrow equipments which is a sub-tricategory of the tricategory of morphisms of $\BICAT $. Thereby, 
it is natural to consider pseudomonads on pseudofunctors and on proarrow equipments.

\begin{defi}[Pseudomonad on proarrow equipments]
A pseudomonad $\TTTTT$ on a pseudofunctor $P: \bbb _0\to\bbb _1  $ is a pair $(\TTTTT _ 0, \TTTTT _1 )$ in which 
$\TTTTT _ 0 = (\TTTTT ^0 ,  m ^0  , \eta ^0 , \mu ^0,   \iota ^0, \tau ^0 )$ is a pseudomonad on $\bbb _0$ and 
$\TTTTT _ 1 = (\TTTTT ^1 ,  m ^1  , \eta ^1  , \mu ^1,   \iota ^1, \tau ^1 )$ is a pseudomonad on $\bbb _1 $ such that this pair of pseudomonads agrees with $P$, which means that: 
$$\TTTTT ^1 P = P\TTTTT ^0, \quad m^1  P = P m^0, \quad \eta ^1  P = P \eta ^0 , \quad \mu ^1  P = P  \mu ^0, \quad \iota ^1  P = P \iota ^0,\quad\tau ^1  P = P \tau ^0. $$

\end{defi}

For our purposes, we could define a simpler version of pseudomonads on proarrow equipments on $2$-categories. 
That is to say, we could say that a pseudomonad on 
$P: \bbb _0 \to \bbb _1 $ is just a pseudomonad on $\bbb _1 $.

\begin{defi}\label{Definicao com Pullback PhD}
Given a pseudomonad $\TTTTT = (\TTTTT _ 0, \TTTTT _1) $ on a proarrow equipment $P : \bbb _ 0\to \bbb _1 $ on a $2$-category $\bbb _0 $, the bicategory of lax $(\TTTTT , P)$-algebras, denoted herein
by $\mathsf{Lax}\textrm{-}(\TTTTT , P) \textrm{-}\CoAlg _{\mathsf{c}\ell}$,
is the pullback of $P$ along the forgetful pseudofunctor $\mathsf{Lax}\textrm{-}\TTTTT _1 \textrm{-}\Alg _{\mathsf{c}\ell}\to \bbb _ 1 $ in the category of bicategories and pseudofunctors. 
\end{defi}

The category of bicategories and pseudofunctors does not have all pullbacks (or equalizers). However, in the context of Definition \ref{Definicao com Pullback PhD},
the pullbacks always exist. Moreover, $\mathsf{Lax}\textrm{-}(\TTTTT , P) \textrm{-}\CoAlg _{\mathsf{c}\ell}$ is always a $2$-category. Explicitly, the objects of 
$\mathsf{Lax}\textrm{-}(\TTTTT , P) \textrm{-}\CoAlg _{\mathsf{c}\ell}$ are lax $\TTTTT $-algebras $\mathtt{z}=  (Z, \alg _ {{}_{\mathtt{z}}}, \overline{{\underline{\mathtt{z}}}}, \overline{{\underline{\mathtt{z}}}}_0 )$ and the morphisms between two objects in $\mathsf{Lax}\textrm{-}(\TTTTT , P) \textrm{-}\CoAlg _{\mathsf{c}\ell}$ are colax $\TTTTT $-morphisms $\mathtt{g} = (g, \left\langle \overline{\mathsf{g}}\right\rangle ) $ between lax $\TTTTT$-algebras such that $P(\dot{g}) = g $ for some morphism $\dot{g}$ of $\bbb _ 0 $. The composition of morphisms $\mathtt{f} = (P(\dot{f}), \left\langle \overline{\mathsf{f}}\right\rangle ), \mathtt{g} = (P(\dot{g}), \left\langle \overline{\mathsf{g}}\right\rangle )  $ is defined by 
$\mathtt{f}\cdot \mathtt{g} := (P(\dot{f}\dot{g} ), \left\langle \underline{\overline{\mathsf{f}\mathsf{g}}}\right\rangle ) $ such that $\left\langle \underline{\overline{\mathsf{f}\mathsf{g}}}\right\rangle $ is defined by
$$\xymatrix@R=4em{ 
X\ar[d]_-{g = P(\dot{g}) }\ar@/_6pc/[dd]|-{ P (\dot{f}\dot{g})}\ar@{{}{ }{}}@/_3pc/[dd]|-{\xRightarrow{ \pppp ^{-1} _ {{}_{(\dot{f} )(\dot{g} )  }} } }
&
\TTTTT X\ar[dd]|-{\TTTTT (fg) }\ar[l]_-{\alg _ {{}_{\mathtt{x}}}}
\ar@/^6pc/[dd]|-{\TTTTT P (\dot{f}\dot{g})}\ar@{{}{ }{}}@/^3pc/[dd]|-{\xRightarrow{\TTTTT\left( \pppp  _ {{}_{(\dot{f} )(\dot{g} )  }} \right) } }
\\
Y\ar[d]_-{f = P(\dot{f})}
&
\ar@{}[l]|-{\xRightarrow{\left\langle \overline{\mathsf{f}\mathsf{g}}\right\rangle } }   
\\
Z
&
\TTTTT Z \ar[l]^-{\alg _ {{}_{\mathtt{z}}}}
} $$
in which
$\left\langle \overline{\mathsf{f}\mathsf{g}}\right\rangle $ denotes the $2$-cell component of the usual composition of the colax $\TTTTT $-morphisms of
$\mathtt{f} $ and $\mathtt{g}$.

The \textit{$2$-category of monads and co-morphisms in a proarrow equipment} 
$P : \bbb _ 0\to \bbb _1$ is defined, then, by 
$\mathsf{Mnd}  _ \co (P) := \mathsf{Lax}\textrm{-}(\Id _P , P) \textrm{-}\Alg _{\mathsf{c}\ell}$. 

\begin{defi}[Proarrows of Matrices]
Given a symmetric closed  monoidal category $(V, \otimes , I)$ with coproducts, the bicategory $V\textrm{-}\mathrm{Mat}$ gives a natural proarrow equipment 
$\Set\to V\textrm{-}\mathrm{Mat}$ on the locally discrete bicategory $\Set $ 
in which a function $f:A\to B $ is taken to $\breve{f}: A\times B \to V $ defined by
$$
\breve{f} (i,j) =
\begin{cases}
    I, & \text{if } f(i) = j\\
    0,              & \text{otherwise}
\end{cases}
$$
in which $0$ is the initial object. 
\end{defi}

\begin{defi}[Proarrows of Spans]
Given a category $V$ with pullbacks, the bicategory $\textit{Span}(V)$ gives a natural proarrow equipment 
$V\to \textit{Span}(V)$ on the locally discrete bicategory $V $ 
in which a morphism $f:A\to B $ is taken to the span
$$
\xymatrix{
A
&
A\ar[l]_-{\id _ {{}_{A}} }
\ar[r]^- {f}
&
B.
}
$$
\end{defi}

\begin{defi}[Enriched Categories and Internal Categories]
Given a symmetric closed  monoidal category $(V, \otimes , I)$ with coproducts, the category of small $V$-enriched categories $V$-$\cat $ is the underlying category of  $\mathsf{Mnd}  _ \co (\Set\to V\textrm{-}\mathrm{Mat}) $.

Given a category with pullbacks $V'$, the category of $V'$-internal categories $\Cat (V') $  is the underlying category of  $\mathsf{Mnd}  _ \co (V' \to\textit{Span}(V')) $.
\end{defi}

\begin{rem}
The introduction of enriched categories via monads of $V\textrm{-}\mathrm{Mat}$ does not work well for large enriched categories, unless we make 
tiresome considerations about enlargements of universes/completions. 
For our setting, however, it is enough to observe that, if $V$ is a large symmetric monoidal closed category that has large coproducts (indexed by discrete categories), one can consider the bicategory as in Definition 
\ref{Definicao Matrizes PhD} but with discrete categories (objects of $\SET $) instead of sets, and matrices indexed by them. By abuse of language, denoting this 
bicategory as $V\textrm{-}\mathrm{Mat}$, it is clear that we can consider the category of large $V$-enriched categories $V\textrm{-}\Cat $ defined by the
underlying (large) category $\mathsf{Mnd}  _ \co (\SET\to V\textrm{-}\mathrm{Mat}) $.
\end{rem}

Explicitly, an internal category of a category with pullbacks $V'$ is a span $A_1\xleftarrow{d ^1 } A _2\xrightarrow{d ^0 } A_1 $, which we denote by $\mathtt{a} $,
together
with the multiplication and identity, $2$-cells $\mathtt{a}\circ\mathtt{a}\Rightarrow \mathtt{a} $ and $\id _ A \Rightarrow \mathtt{a} $ 
of $\textit{Span}(V')$, satisfying the conditions of monad/lax $\Id _ {{}_\Cat} $-algebra of associativity and action of identity described in Definition 4.1 of Chapter 5~\cite{2016arXiv160703087L}. Recall that, by definition, the $2$-cells $\mathtt{a}\circ\mathtt{a}\Rightarrow \mathtt{a} $ and $\id _ A \Rightarrow \mathtt{a} $ are just morphisms $\overline{\underline{\mathtt{a}}} : A_2\times _{(d^0, d^1)} A_2\to A_1$ and $\overline{\underline{\mathtt{a}}}_0 : A_1\to A_2 $ of $V' $ such that
$$\xymatrix@C=1em@R=1.2em{
&
A_1
\ar@{=}[rdd]
&
&
&
A_1
&
&
\\
&
&
&
&
&
A_2
\ar[lu]|-{d^0}
&
\\
A_2
\ar[rdd]|-{d^1}
\ar[ruu]|-{d^0}
&
&
A_1
\ar[ll]|-{\overline{\underline{\mathtt{a}}} _0}
&
A_2
\ar[ruu]|-{d^0}
\ar[rdd]|-{d^1}
&
&
&
A_2\times _{(d^0, d^1 )} A_2
\ar@/_1pc/[lluu]|-{D^0}
\ar@/^1pc/[lldd]|-{D^1}
\ar[lll]|-{\overline{\underline{\mathtt{a}}} }
\ar[lu]
\ar[ld]
\\
&
&
&
&
&
A_2
\ar[ld]|-{d ^1}
&
\\
&
A_1
\ar@{=}[ruu]
&
&
&
A_1
&
&
}$$
commute. In this case, the object $A_1 $ is called the \textit{object of objects}, the object $A_2$ is 
called the \textit{object of morphisms}, $d^1 $ is the \textit{domain morphism}, $d^0 $ is the \text{codomain morphism}, the morphism $\overline{\underline{\mathtt{a}}}$ is the \textit{composition} and $\overline{\underline{\mathtt{a}}}_0 : A_1\to A_2 $ is the \textit{identity assigning}.

\begin{defi}[Double Category]
The category of double categories is the category of internal categories of $\Cat $, that is to say $\Cat (\Cat ) $.

A double category $\mathfrak{X} $ is, then, a span $\mathfrak{X}_1\xleftarrow{d ^1 } \mathfrak{X} _2\xrightarrow{d ^0 } \mathfrak{X}_1 $ of $\Cat $
with the composition and identity satisfying the usual conditions. Given such a double category, 
the objects of $\mathfrak{X}_1 $ are called the objects of the double category, while the morphisms of $\mathfrak{X}_1 $ are called vertical arrows.
The objects $f$ of $\mathfrak{X} _2$ are called horizontal arrows (or morphisms) of the double category $\mathfrak{X}$ and we denote it by
$f: d^1(f)\to d^0 (f) $ in which $d^1(f) $ is called the domain of $f$ and $d^0(f) $ is the codomain of $f$. Finally, if $\alpha : f\to g $ is a morphism
of $\mathfrak{X}_2 $, we denote it by
$$\xymatrix@=4em{
d^1 (f)
\ar@{}[rd]|-{\alpha}
\ar[r]|-{f}
\ar[d]|-{d^1(\alpha )}
&
d^0 (f)
\ar[d]|-{d^0 (\alpha )}
\\
d^1 (g)
\ar[r]|-{g }
&
d^0(g)
}
$$
and we say that $\alpha $ is a $2$-cell (or a \textit{square}) of $\mathfrak{X} $.
\end{defi}

A \textit{$2$-category} is just a $\Cat $-enriched category. Clearly, if the 
category of objects of a double category $\mathfrak{D}$ is discrete, then it is a $2$-category. More generally, the full subcategory of $\Cat (\Cat ) $ consisting of the double categories without nontrivial vertical arrows is isomorphic to the category of $2$-categories and $2$-functors $\Cat\textrm{-}\Cat $. 
For suitable categories $V$ instead of $\Cat $, a similar property holds. This generalization is given by Lemma 9.10 of Chapter 3~\cite{2016arXiv160604999L}.

\section{Freely generated n-categories, coinserters and presentations of low dimensional categories}\label{Capitulo2 PhD}

Chapter 2  corresponds to the paper 
\textit{Freely generated n-categories, coinserters 
and presentations of low dimensional categories}~\cite{2017arXiv170404474L},
\textit{DMUC 17-20} or \textit{arXiv:1704.04474}. As the title suggests, 
the main subjects of this paper
are related to development of a theory towards the study of presentations of low dimensional categories and freely generated categorical structures.
Although it was the last paper to be written, it introduces some basic aspects of $2$-category theory.

The chapter starts by giving basic aspects of $2$-dimensional (weighted) colimits, 
focusing on coinserters, coequifiers and coinverters. These are the $2$-dimensional
colimits that have direct applications respectively in the study of adding free morphisms to a category, forcing relations between morphisms and categories of fractions~\cite{MR1245795, MR998024}. We are, however, interested in particular cases: freely generated categories, the left adjoint of the inclusion of thin categories and the left
adjoint of the
inclusion of groupoids.

The category of thin categories $\Prd $ and the category of groupoids $\Gr $, as defined in 1.1 of Chapter 2, are
replete reflective subcategories of $\Cat$, with inclusions $M_1 : \Prd\to\Cat $ and $\U _1 : \Gr\to \Cat $. Hence there is an easy way of characterizing the
images of $M_1 $ and $\U _1 $ via universal properties. More precisely, if we denote the reflectors by $\M _1 : \Cat\to\Prd $ and $\L _1 : \Cat\to\Gr $, we have that, given a category $X$ of $\Cat $, there is $Y$ of $\Prd $ ($Y$ of $\Gr $) such that $M_1(Y)\cong X $ ($\U _1 (Y)\cong X$ ) if and only if  $M_1\M_1 (X)\cong X $
($\U_1\L_1 (X)\cong X $). Since $\U_1\L_1 (X)$ and $M_1\M_1 (X) $ are given, respectively, by a coinverter and a coequifier, we get Proposition 2.1.4 and Theorem 2.1.7.
Actually, within the context of Chapter 2, the most important fact in this direction is that we can get the category  freely generated by a graph $G: \G ^\op\to\Set $, denoted by $\F _1 (G) $, 
via the coinserter of $G$ composed with the inclusion $\Set\to\Cat $. This is Lemma  2.2.3. It motivates one of the main points of the paper: to give freely generated categorical structures via coinserters.

After showing these facts, we give further background in Section 2.2. We study basic aspects of freely generated categories. We introduce basic notions of 
graphs (trees, forests and connectedness) relating with the groupoids and categories freely generated by them. We study reflexive graphs as well. The main
importance of reflexive graphs within our context is the fact that the terminal category is freely generated by 
the terminal reflexive graph, 
while $\F _1 $ does not preserve the terminal object. Finally, we also characterize the totally ordered sets that are 
free categories and show that freeness of groupoids is a property preserved by equivalences.

Then, we give the basic notions of presentations of this paper. On one hand, we show how every monad $\TTTTT $ induces a natural notion of presentation of $\TTTTT$-algebras
in Section 3 of Chapter 2. In particular, we have that the free category monad induces a notion of presentation of categories. On the other hand, in Section 4 of Chapter 2, we define computads and show how it induces a notion of presentation of categories (and groupoids) with equations between morphisms. We compare both notions of presentations in Theorems 2.4.5 and 2.4.6.

Since we can see computads as free categories together with relations between morphisms, we introduce the suitable variation of the concept of computad
 to deal with presentation of groupoids:
groupoidal computads. This has particular interest in Section 5 of Chapter 2   which deals with the relation between topology and computads. Moreover, the notion
of presentations of groups via groupoidal computads coincides with the usual notion, as explained in Remark 4.19 of Chapter 2.

Section 5 of Chapter 2 (\cite{2017arXiv170404474L}) establishes fundamental connections between 
topology and computads. We start by showing that the usual association of a 
\textit{topological graph} to each graph, usually called \textit{geometric realization}, 
consists of a left adjoint functor $\F _ {\Top _1}: \grph\to \Top $ given objectwise by the 
\textit{topological coinserter} introduced therein. In this context, 
we show that there is a distributive law 
between the monads induced by $\F _ 1 $ and $\F _ {\Top _1} $ which is constructed from the usual notion of concatenation of continuous paths.
As a fundamental tool to study presentation of
thin and locally thin
categorical structures, using the distributive law mentioned above, 
we give a detailed construction of a $2$-dimensional analogue of   $\F _ {\Top _1} $, 
denoted by $\F _ {\Top _2}: \cmp\to \Top$, which associates each computad to a topological space. 

We introduce the fundamental groupoid functor $\Pi : \Top\to\gr $ using the concept of presentation via computads. 
More precisely, we firstly associate a computad to
each topological space and, then, $\Pi $ is given by the groupoid presented by the associated computad. We prove 
theorems that relate the fundamental groupoid 
and freely generated groupoids. In particular, the last results of Section 5 of Chapter 2 state that the fundamental groupoid of $\F _ {\Top _2}(\mathfrak{g}) $
is equivalent to the groupoid presented by $\mathfrak{g} $. 
These results show that $\F _ {\Top _2}$ formalizes the usual construction of a 
CW-complex from each
presentation of groups, the Lyndon–van Kampen diagrams~\cite{MR1506963}.

In Section 6 of Chapter 2~\cite{2017arXiv170404474L}, we introduce our main notions of deficiency. More precisely, we introduce the notion of deficiency of a groupoid w.r.t.
presentation of groupoids and the notion deficiency of a presentation of a $\TTTTT$-algebra w.r.t.
a finite measure. Under suitable hypotheses, we find both notions to coincide in the particular case of groupoids. They also coincide with the usual notions
of deficiency. In this section, mostly using the results of Section 5 of Chapter 2~\cite{2017arXiv170404474L}, we also develop a theory towards the study of thin categories and thin groupoids. For instance, we prove that, whenever $\mathfrak{g}$ is a computad such that $\F _ {\Top _2}(\mathfrak{g}) $ has Euler characteristic smaller than $1$, then  the groupoid presented by $\mathfrak{g}$ is not thin. From this fact, we can prove that deficiency of a thin groupoid is $0$, recasting and generalizing the result that says that trivial groups have deficiency $0$.

Although our definition of computads is equivalent to the original one of \cite{MR0401868}, 
we introduce it via a graph satisfying a coincidence property, as it is shown in Remark 8.12 of Chapter 2. The main point of our perspective, besides giving a concise recursive definition, is that it  
 allows us to prove that the $2$-category freely generated by a computad $\mathfrak{g} $ is the coinserter of $\mathfrak{g}$, when we consider $\mathfrak{g}$ as
a graph internal to an appropriate $2$-category of
$2$-categories: the $2$-category of $2$-categories, $2$-functors and icons~\cite{MR2640216}. In order to get freely generated $n$-categories via coinserters, we  introduce higher dimensional analogues of icons. These concepts and results, including the general result that states that the $n$-category freely
generated by an $n$-computad is its coinserter, are given in Sections 7 and 8 of Chapter 2~\cite{2017arXiv170404474L}.

We finish the paper studying aspects of presentations of $2$-categories. We show simple examples of 
locally thin $2$-categories that are not free and develop a theory to study presentations of locally thin and groupoidal 
$2$-categories. We  give efficient presentations of $2$-categories related to the \textit{strict descent object}.
In the end of Chapter 2~\cite{2017arXiv170404474L}, we sketch a construction of the $3$-dimensional analogue of $\F _ {\Top _2}$, that is to say $\F _ {\Top _3}$, which
associates a $3$-dimensional CW-complex to each $3$-computad.

\section{Beck-Chevalley}\label{Doctrinal Adjunction PhD}

The \textit{mate correspondence}~\cite{MR0357542, MR2781909} is a fundamental framework in $2$-dimensional category theory. For instance, this correspondence is 
in the core of the techniques of Chapter 2 to construct $\F _ {\Top _2}$ and the distributive law   between the
 monads induced by $\F _ 1 $ and $\F _ {\Top _1} $. Another important example is in Chapter 3: the Beck-Chevalley condition, written
in terms of a simple mate correspondence, 
plays an important role in the proof of the B\'{e}nabou-Robaud Theorem. 

The main aim of this section is to present the Beck-Chevalley condition within the context of $2$-dimensional monad theory. In order to do so, 
we present 
the most elementary version of \textit{mate correspondence} in Theorem \ref{Mate Correspondence Theorem PhD}. We start by defining and giving elementary results on adjunctions in a
$2$-category. 

\begin{defi}[Adjunction]\label{Adjuncao Capitulo 1 PhD}
An adjunction in a $2$-category $\aaa $ is a quadruplet $$(f: Y\to Z ,g: Z\to Y  , \epsilon : fg\Rightarrow \id _ {{}_{Z}}, \eta : \id _{{}_{Y}}\Rightarrow gf ) ,$$ in which 
$f, g $ are $1$-cells and $\epsilon , \eta $ are $2$-cells of $\aaa $, such that 
$$\xymatrix@=3em{
Y
\ar@{}[rd]|-{\xRightarrow{\enspace\eta\quad } }
\ar[rr]^-{f}
\ar@{=}[dd]
&
&
Z
\ar[lldd]|-{g}
\ar@{=}[dd]
&
Y
\ar@{=}[dd]
\ar@{}[rd]|-{\xRightarrow{\enspace\eta\quad } }
&
&
Z
\ar[ll]_-{g}
\ar@{=}[dd]
\\
&&&&&
\\
Y
\ar[rr]_-{f}
&
&
Z
\ar@{}[lu]|-{\xRightarrow{\enspace\epsilon\quad } }
&
Y
\ar[rruu]|-{f}
&
&
Z
\ar[ll]^-{g}
\ar@{}[lu]|-{\xRightarrow{\enspace\epsilon\quad } }
}$$
are respectively the identity $2$-cells $f\Rightarrow f $ and $g\Rightarrow g $. 
In this case, we denote the adjunction by 
$(f\dashv g , \epsilon , \eta ): Y\to Z  $.
For short, we also denote such an adjunction by just $f\dashv g $ when the counit and unit are implicit.
\end{defi} 

\begin{rem}
If $(f\dashv g , \epsilon , \eta ): Y\to Z  $ is an adjunction, $f$ is called left adjoint, $g $ is called right adjoint,
$\epsilon $ is called the counit and $\eta $ 
is called the unit of the adjunction. Moreover, the equations of Definition  \ref{Adjuncao Capitulo 1 PhD} are called triangle identities.
 
\end{rem}

\begin{rem}\label{Adjuntos unicos capitulo 1 PhD}
It is clear that adjoints are unique up to isomorphism. More precisely, if 
$(\widetilde{f}\dashv g , \mu  , \rho )$ and $(f\dashv g , \epsilon , \eta ): Y\to Z $ then
$$\xymatrix@=2.5em{
Y
\ar@{}[rd]|-{\xRightarrow{\enspace\eta\quad } }
\ar[rr]^-{f}
\ar@{=}[dd]
&
&
Z
\ar[lldd]|-{g}
\ar@{=}[dd]
&
&
Y
\ar[rr]^-{\widetilde{f}}
\ar@{=}[dd]
\ar@{}[rd]|-{\xRightarrow{\enspace\rho\quad } }
&
&
Z
\ar[lldd]|-{g }
\ar@{=}[dd]
\\
&&&\mbox{is the inverse of}&&&
\\
Y
\ar[rr]_-{\widetilde{f}}
&
&
Z
\ar@{}[lu]|-{\xRightarrow{\enspace\mu\quad } }
&
&
Y
\ar[rr]_-{f}
&
&
Z
\ar@{}[lu]|-{\xRightarrow{\enspace\epsilon\quad } }
}$$
by the triangle identities. In particular, $f\cong \widetilde{f} $.
\end{rem}

Let $\aaa $ be a $2$-category. We can construct a category of adjunctions $\aaa ^\mathsf{adj} $ of $\aaa $. The objects of $\aaa ^\mathsf{adj} $ are the objects of $\aaa $, but the morphisms are adjunctions $(f\dashv g , \epsilon , \eta ): Y\to Z  $. The identities are the adjunctions between identities with identities counit and unit. Given adjunctions $(f_2\dashv g_2 , \epsilon _ 2 , \eta _2 ): Y\to Z  $
and $(f_1\dashv g_1 , \epsilon _1 , \eta _1 ): X\to Y  $, the composition is given by $(f_2f_1\dashv g_1 g_2 , \epsilon _3 , \eta _3 ): X\to Z  $
in which $\epsilon _3$ and $ \eta _3 $ are defined below. 
$$\xymatrix@=0.9em{
&
X
\ar@{=}[dddd]
\ar[rrd]^-{f_1}
&&
&&
&
&&
Z
\ar@{=}[dddd]
&&
&&
\\
&
&&
Y
\ar[rrd]^-{f_2}
\ar[dd]|-{\id _ {{}_{Y}} }
&&
&
&&
&&
Y
\ar[llu]_-{f_2}
&&
\\
\eta _3 \ar@{}[r]|-{:=}
&
\ar@{}[rr]|-{\xRightarrow{\enspace\eta _1\enspace } }
&&
&&
Z
\ar@{}[ll]|-{\xRightarrow{\enspace\eta _2\enspace } }
\ar[lld]^-{g_2}
&&
\epsilon _3 \ar@{}[r]|-{:=}
&
\ar@{}[rr]|-{\xLeftarrow{\enspace\epsilon _2\enspace } }
&&
&&
X
\ar[llu]_-{f_1}
\ar@{}[ll]|-{\xLeftarrow{\enspace\epsilon _1 \enspace} }
\\
&
&&
Y
\ar[lld]^-{g_1}
&&
&
&&
&&
Y
\ar[uu]|-{\id _ {{}_{Y}} }
\ar[rru]_-{g_1}
&&
\\
&
X
&&
&&
&&
&
Z
\ar[rru]_-{g_2}
&&
&&
}
$$

Of course, $2$-functors take adjunctions to adjunctions. However, pseudofunctors do not.
Instead, in this case, we can say that left (or right) adjoints are taken to left (or right) adjoints.
More precisely, we have Lemma \ref{induced adjunction PhD}.  

In order to prove such result, we use Lemma \ref{Fundamental lema PhD}, a basic result on the image of pasting of $2$-cells. 
Following the notation established in Definition 4.2.1~\cite{MR3491845}, we have:

\begin{lem}\label{Fundamental lema PhD}
If $L:\aaa\to\bbb $ is a pseudofunctor, then:
\begin{equation*}\label{Eq Fundamental De Pasting PhD}
   L \left(
    \vcenter{
    \xymatrix{
W
\ar@{}[rd]|-{\xRightarrow{\enspace\alpha\quad } }
\ar[rr]|-{g}
\ar[dd]|-{m}
&
&
X
\ar[lldd]|-{h}
\ar[dd]|-{n}
\\
&&
\\
Z
\ar[rr]|-{f}
&
&
Y
\ar@{}[lu]|-{\xRightarrow{\enspace\beta\quad } }
}}\right)    
        =         
            \mathfrak{l}_{{}_{ng} }\cdot \left(\vcenter{
    \xymatrix{
L(W)
\ar@{}[rd]|-{\xRightarrow{ \mathfrak{l}_{{}_{hg}}^ {-1}\cdot L(\alpha )  } }
\ar[rr]|-{L(g)}
\ar[dd]|-{L(m)}
&
&
L(X)
\ar[lldd]|-{L(h)}
\ar[dd]|-{L(n)}
\\
&&
\\
L(Z)
\ar[rr]|-{L(f)}
&
&
L(Y)
\ar@{}[lu]|-{\xRightarrow{L(\beta )\cdot \mathfrak{l}_{{}_{fh}} } }
}}
\right)\cdot \mathfrak{l}_{{}_{fm} }^{-1},
\end{equation*}

\begin{equation*}\label{Eq Fundamental De Pasting PhD 2}
   L \left(
    \vcenter{
    \xymatrix{
Z
\ar@{}[rd]|-{\xRightarrow{\enspace\alpha\quad } }
&
&
X
\ar[ll]|-{h}
\\
&&
\\
W\ar[rruu]|-{g}\ar[uu]|-{m}
&
&
Y\ar[uu]|-{t}
\ar[ll]|-{u}
\ar@{}[lu]|-{\xRightarrow{\enspace\gamma\quad } }
}}\right)    
        =         
            \mathfrak{l}_{{}_{ht} }\cdot \left(\vcenter{
    \xymatrix{
L(Z)
\ar@{}[rd]|-{\xRightarrow{   \mathfrak{l}_{{}_{hg}}^{-1} \cdot L(\alpha )  } }
&
&
L(X)
\ar[ll]|-{L(h)}
\\
&&
\\
L(W)\ar[uu]|-{L(m)}
\ar[rruu]|-{L(g)}
&
&
L(Y)\ar[uu]|-{L(t)}
\ar[ll]|-{L(u)}
\ar@{}[lu]|-{\xRightarrow{L(\gamma )\cdot \mathfrak{l}_{{}_{gu}} } }
}}
\right)\cdot \mathfrak{l}_{{}_{mu} }^{-1}.
\end{equation*}

\end{lem}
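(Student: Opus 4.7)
The strategy is to reduce the identity to the defining coherence data of the pseudofunctor $L$: strict preservation of vertical composition of $2$-cells, naturality of the composition constraints $\mathfrak{l}$, and the interchange law in $\bbb$.

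First I would read the pasting on the left-hand side as a genuine vertical composite of two $2$-cells that meet along the diagonal $1$-cell $h$. Because a pseudofunctor preserves vertical composition on the nose, applying $L$ yields the corresponding vertical composite of $L(\alpha)$ and $L(\beta)$ in $\bbb$. However, these two $2$-cells live between images of composite $1$-cells in $\aaa$, such as $L(ng)$ and $L(fm)$, rather than between the corresponding composites of images, such as $L(n)L(g)$ and $L(f)L(m)$. The constraint isomorphisms $\mathfrak{l}_{ng}$ and $\mathfrak{l}_{fm}$ are precisely what translate between these two forms; pasting them onto the top and bottom of the composite produces the outer $\mathfrak{l}_{ng}$ and $\mathfrak{l}_{fm}^{-1}$ appearing on the right-hand side.

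Next I would identify the inner pasting on the right-hand side with that translated composite. The $2$-cells in the inner triangles, written $\mathfrak{l}_{hg}^{-1}\cdot L(\alpha)$ and $L(\beta)\cdot\mathfrak{l}_{fh}$, are exactly the expressions of $L(\alpha)$ and $L(\beta)$ after composing with the constraint that separates the composite image from the image of the composite on the side that meets the middle arrow $L(h)$. Equality of the two global $2$-cells then follows from the interchange law and the naturality of the constraints $\mathfrak{l}$, together with a trivial cancellation of $\mathfrak{l}\cdot\mathfrak{l}^{-1}$ along the diagonal. The second equation has the same structure, with the roles of the two triangles interchanged.

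The main obstacle is essentially notational: one must carefully track the direction of each constraint $\mathfrak{l}$ and which whiskered composite it acts upon, since several composites are involved simultaneously. No use of the pentagon coherence of the pseudofunctor is required, because at most two composable $1$-cells occur inside any constraint that is used; only the naturality of $\mathfrak{l}$ and the strict preservation of vertical composition by $L$ enter the argument, and both are immediate from the definition of pseudofunctor.
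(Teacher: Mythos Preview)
Your overall strategy matches the paper's: rewrite the inner pasting via the interchange law, then use naturality of the constraints $\mathfrak{l}$ to push $\mathfrak{l}_{ng}$ and $\mathfrak{l}_{fm}^{-1}$ past $L(\beta)\ast L(\id_g)$ and $L(\id_f)\ast L(\alpha)$, and finally use that $L$ preserves vertical composition of $2$-cells. However, your claim that the remaining cancellation along the diagonal is a ``trivial cancellation of $\mathfrak{l}\cdot\mathfrak{l}^{-1}$'' and that ``no use of the pentagon coherence of the pseudofunctor is required'' is incorrect, and this is exactly where the paper's proof does nontrivial work.

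After applying naturality, the middle of the expression becomes
\[
\mathfrak{l}_{{}_{(fh)g}}\cdot\bigl(\mathfrak{l}_{{}_{fh}}\ast\id_{{}_{L(g)}}\bigr)\cdot\bigl(\id_{{}_{L(f)}}\ast\mathfrak{l}_{{}_{hg}}^{-1}\bigr)\cdot\mathfrak{l}_{{}_{f(hg)}}^{-1}.
\]
These four constraint $2$-cells are \emph{not} literally a pair $\mathfrak{l}$ and its inverse: the outer two are constraints for the composites $(fh)\circ g$ and $f\circ(hg)$, which have the same target $L(fhg)$ but different sources $L(fh)L(g)$ and $L(f)L(hg)$. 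Showing that this composite is the identity is exactly the associativity coherence axiom of a pseudofunctor, which rewrites $\mathfrak{l}_{{}_{(fh)g}}\cdot(\mathfrak{l}_{{}_{fh}}\ast\id_{{}_{L(g)}})$ as $\mathfrak{l}_{{}_{f(hg)}}\cdot(\id_{{}_{L(f)}}\ast\mathfrak{l}_{{}_{hg}})$; only then does the cancellation become tautological. So although each individual $\mathfrak{l}$ you write down involves only two composable $1$-cells, the simultaneous presence of three composable $1$-cells $f,h,g$ forces the associativity axiom to enter. Once you acknowledge and use that axiom, your argument coincides with the paper's.
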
 
\begin{proof}

Firstly, by the interchange law, it is clear that the right side of the first equation
above is equal to 
$$\mathfrak{l}_{{}_{ng}}\cdot \left( L(\beta )\ast L(\id _ {{}_{g}})\right)\cdot 
\left( \mathfrak{l}_{{}_{fh}}\ast\id _ {{}_{L(g)}}\right)\cdot
\left( \id _ {{}_{L(f)}}\ast \mathfrak{l}_{{}_{hg}}^{-1}\right)
\cdot \left( L(\id _ {{}_{f}})\ast L(\alpha ) \right) \cdot \mathfrak{l} _ {{}_{fm}}^{-1} .$$ 
Then, by the naturality of Definition 4.2.1~\cite{MR3491845}, 
this is equal to	
$$ L(\beta \ast \id _ {{}_{g}})\cdot \mathfrak{l}_{{}_{(fh)g}}\cdot 
\left( \mathfrak{l}_{{}_{fh}}\ast\id _ {{}_{L(g)}}\right)\cdot
\left( \id _ {{}_{L(f)}}\ast \mathfrak{l}_{{}_{hg}}^{-1}\right)
\cdot \mathfrak{l} _ {{}_{f(hg)}}^{-1} \cdot L(\id _ {{}_{f}}\ast\alpha ),$$
which is indeed equal to the left side of the equation above, since, by 
the associativity of Definition 4.2.1~\cite{MR3491845},
\begin{eqnarray*}
\mathfrak{l}_{{}_{(fh)g}}\cdot 
\left( \mathfrak{l}_{{}_{fh}}\ast\id _ {{}_{L(g)}}\right)\cdot
\left( \id _ {{}_{L(f)}}\ast \mathfrak{l}_{{}_{hg}}^{-1}\right)
\cdot \mathfrak{l} _ {{}_{f(hg)}}^{-1} &=&
\mathfrak{l}_{{}_{f(hg)}}\cdot 
\left( \id _ {{}_{L(f)}}\ast \mathfrak{l}_{{}_{hg}}\right)\cdot
\left( \id _ {{}_{L(f)}}\ast \mathfrak{l}_{{}_{hg}}\right)^{-1}
\cdot \mathfrak{l} _ {{}_{f(hg)}}^{-1}
\\
&=&
\id _ {{}_{L(fhg)}}
\end{eqnarray*}
\normalsize
and $L (\beta \ast \id _ {{}_{g}})\cdot L(\id _ {{}_{f}}\ast\alpha )= L\left((\beta \ast \id _ {{}_{g}})\cdot(\id _ {{}_{f}}\ast\alpha )\right) $. 
This proves the first equation. The proof
of the second one is analogous.
\end{proof}

\begin{lem}\label{induced adjunction PhD}
If $L:\aaa\to\bbb $ is a pseudofunctor and  $(f\dashv g , \epsilon , \eta ): Y\to Z  $ is
an adjunction in $\aaa $, then $$(L(f)\dashv L(g), 
\mathfrak{l}_{{}_{Z}}^{-1}\cdot L(\epsilon )\cdot \mathfrak{l}_{{}_{fg}} , 
\mathfrak{l}_{{}_{gf}}^{-1}\cdot L(\eta )\cdot \mathfrak{l}_{{}_{Y}})$$  
is an adjunction in $\bbb $. Whenever $(f\dashv g , \epsilon , \eta )$ is implicit, we usually denote the induced adjunction above by $L(f)\dashv L(g) $.
\end{lem}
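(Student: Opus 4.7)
The plan is to verify the two triangle identities for the proposed counit $\varepsilon' := \mathfrak{l}_{Z}^{-1}\cdot L(\epsilon )\cdot \mathfrak{l}_{fg}$ and unit $\eta' := \mathfrak{l}_{gf}^{-1}\cdot L(\eta )\cdot \mathfrak{l}_{Y}$ for $L(f)\dashv L(g)$ in $\bbb$. The two are completely symmetric, so I describe only the first; the second is obtained by exchanging the roles of $f$ and $g$ and invoking the second equation of Lemma \ref{Fundamental lema PhD}.

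First, I would rewrite the first triangle identity for $f\dashv g$ in $\aaa$ as the statement that the pasting of $\eta\colon \id_Y\Rightarrow gf$ with $\epsilon\colon fg\Rightarrow \id_Z$ along the common diagonal $g\colon Z\to Y$ (with both verticals equal to $f$) equals $\id_f$. This pasting has exactly the shape treated by the first equation of Lemma \ref{Fundamental lema PhD}, with the roles of $\alpha$ and $\beta$ played by $\eta$ and $\epsilon$, and with $m=\id_Y$, $h=g$, $n=\id_Z$.

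Next, I would apply $L$ to both sides and invoke Lemma \ref{Fundamental lema PhD} to rewrite $L$ of the left-hand pasting as a pasting in $\bbb$ of $L(\eta)$ and $L(\epsilon)$ flanked by coherence cells $\mathfrak{l}_{gf}^{-1}$, $\mathfrak{l}_{fg}$, $\mathfrak{l}_{\id_Z\cdot f}^{-1}$, $\mathfrak{l}_{\id_Z\cdot f}$ (together with unit-comparison $2$-cells $\mathfrak{t}_Y$, $\mathfrak{t}_Z$ arising from the $1$-cells $m=\id_Y$ and $n=\id_Z$) along the appropriate edges. Grouping the neighbouring coherence cells around $L(\eta)$ and $L(\epsilon)$ makes the subpastings $\eta'$ and $\varepsilon'$ appear explicitly; the remaining coherence cells sit in pairs $\mathfrak{l}\cdot\mathfrak{l}^{-1}$ along the common diagonal $L(g)$ and collapse. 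On the right-hand side, $L(\id_f)=\id_{L(f)}$ holds modulo the unitors $\mathfrak{t}_f$, which cancel in the same way by the unit axiom of a pseudofunctor. What remains is exactly the first triangle identity for $L(f)\dashv L(g)$ with the declared counit and unit.

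The main obstacle is purely bookkeeping: tracking which coherence $2$-cell inserted by Lemma \ref{Fundamental lema PhD} ends up next to which unitor or associator of $L$, so that all the cancellations line up. No genuinely new idea beyond the pseudofunctor coherence axioms of Definition 4.2.1 of Chapter 4 and Lemma \ref{Fundamental lema PhD} is required; morally, this is the standard fact that pseudofunctors preserve adjunctions, stripped of all the notational comforts of the strict case.
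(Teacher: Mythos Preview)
Your approach is essentially the same as the paper's: apply Lemma~\ref{Fundamental lema PhD} to $L$ of each triangle pasting, then use the pseudofunctor unit axioms (Equation~2 of Definition~4.2.1) to rewrite the outer coherence cells $\mathfrak{l}_{\id_Z\cdot f}$ and $\mathfrak{l}_{f\cdot\id_Y}^{-1}$ as $\mathfrak{l}_Z^{-1}\ast\id_{L(f)}$ and $\id_{L(f)}\ast\mathfrak{l}_Y$, which whisker into the inner cells to produce exactly $\varepsilon'$ and $\eta'$. One small correction to your bookkeeping: nothing ``sits in pairs $\mathfrak{l}\cdot\mathfrak{l}^{-1}$ along the diagonal and collapses''; rather, the outer cells are absorbed into $\varepsilon'$ and $\eta'$ via the unit axiom, and $L(\id_f)=\id_{L(f)}$ holds on the nose since $L$ is locally a functor.
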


\begin{proof}
By Lemma \ref{Fundamental lema PhD}, we get that:
\begin{equation*}
   L \left(
    \vcenter{
\xymatrix{
Y
\ar@{=}[dd]
\ar@{}[rd]|-{\xRightarrow{\enspace\eta\quad } }
&
&
Z
\ar[ll]_-{g}
\ar@{=}[dd]
\\
&&
\\
Y
\ar[rruu]|-{f}
&
&
Z
\ar[ll]^-{g}
\ar@{}[lu]|-{\xRightarrow{\enspace\epsilon\quad } }
}}\right)    
        =         
            \mathfrak{l}_{{}_{g\,\id _{{}_{Z}} } }\cdot \left(\vcenter{
  \xymatrix{
L(Y)
\ar@{=}[dd]
\ar@{}[rd]|-{\xRightarrow{\mathfrak{l}_{{}_{gf}}^{-1}\cdot L(\eta )  } }
&
&
L(Z)
\ar[ll]_-{L(g)}
\ar@{=}[dd]
\\
&&
\\
L(Y)
\ar[rruu]|-{L(f)}
&
&
L(Z)
\ar[ll]^-{L(g)}
\ar@{}[lu]|-{\xRightarrow{L(\epsilon )\cdot \mathfrak{l}_{{}_{fg}}} }
}}
\right)\cdot \mathfrak{l}_{{}_{\id_{{}_{Y}} g } }^{-1},
\end{equation*}

\begin{equation*}
   L \left(
    \vcenter{
\xymatrix{
Y
\ar@{}[rd]|-{\xRightarrow{\enspace\eta\quad } }
\ar[rr]^-{f}
\ar@{=}[dd]
&
&
Z
\ar[lldd]|-{g}
\ar@{=}[dd]
\\
&&
\\
Y
\ar[rr]_-{f}
&
&
Z
\ar@{}[lu]|-{\xRightarrow{\enspace\epsilon\quad } }
}
}\right)    
        =         
              \mathfrak{l}_{{}_{ \id _{{}_{Z}} f } }\cdot \left(\vcenter{
  \xymatrix{
L(Y)
\ar@{}[rd]|-{\xRightarrow{\mathfrak{l}_{{}_{gf}}^{-1}\cdot L(\eta ) } }
\ar[rr]^-{L(f)}
\ar@{=}[dd]
&
&
L(Z)
\ar[lldd]|-{L(g)}
\ar@{=}[dd]
\\
&&
\\
L(Y)
\ar[rr]_-{L(f)}
&
&
L(Z)
\ar@{}[lu]|-{\xRightarrow{L(\epsilon )\cdot \mathfrak{l}_{{}_{fg}} } }
}}
\right)\cdot \mathfrak{l}_{{}_{f\,\id_{{}_{Y}} } }^{-1}.
\end{equation*} 
\normalsize
Since, by Equation 2 of Definition 4.2.1~\cite{MR3491845}, 
\begin{equation*}
\mathfrak{l}_{{}_{f\,\id_{{}_{Y}} } }^{-1} = \id _{{}_{L(f) }}\ast \mathfrak{l}_{{}_{Y}} ,\quad 
\mathfrak{l}_{{}_{\id_{{}_{Y}} g } }^{-1}= \mathfrak{l}_{{}_{Y}}\ast \id _{{}_{L(g) }}, \quad
 \mathfrak{l}_{{}_{ \id _{{}_{Z}} f } } = \mathfrak{l}_{{}_{Z}}^{-1}\ast \id _{{}_{L(f) }}, \quad
 \mathfrak{l}_{{}_{g\,\id _{{}_{Z}} } } = \id _{{}_{L(g) }}\ast \mathfrak{l}_{{}_{Z}} ^{-1},
\end{equation*}
 the result follows.
\end{proof}

\begin{theo}[Mate Correspondence]\label{Mate Correspondence Theorem PhD}
Let $(f\dashv g) := (f\dashv g , \epsilon  , \eta  ): Z\to Y   $
and $(l\dashv u) := (l\dashv u , \mu , \rho ): W\to X   $ be adjunctions in a $2$-category $\aaa $. 
Given $1$-cells $m: X\to Y $ and $n: W\to Z $ of $\aaa $,
there is a bijection $\aaa (X,Z)(nu, gm)\cong \aaa (W,Y)(fn, ml) $, given by $\alpha \mapsto \overline{\alpha } ^{f\dashv g}_{l\dashv u} $ in which $\overline{\alpha } ^{f\dashv g}_{l\dashv u}$ is defined by:
$$
\xymatrix@=3.5em{
&
W
\ar[r]^-{l}
\ar@{=}[d]
&
X
\ar[ld]|-{u}
\ar[d]^-{m}
\\
\overline{\alpha } ^{(f\dashv g)}_{(l\dashv u)}
\ar@{}[r]|-{:=}
&
W
\ar@{{}{ }{}}@/^0.8pc/[ru]|-{\xRightarrow{\rho } }
\ar[d]_-{n}
\ar@{}[r]|-{\xRightarrow{\alpha} }
&
Y
\ar[ld]|-{g}
\ar@{=}[d]
\\
&
Z
\ar[r]_-{f}
\ar@{{}{ }{}}@/_0.7pc/[ru]|-{\xRightarrow{\epsilon } }
&
Y
}
$$
We call $\overline{\alpha } ^{f\dashv g}_{l\dashv u}$ the mate of $\alpha $ under the adjunction $(l\dashv u , \mu , \rho )$ and the adjunction $(f\dashv g , \epsilon  , \eta  )$. 
\end{theo}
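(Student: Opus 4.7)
The plan is to produce an explicit inverse $\Psi$ to the map $\alpha \mapsto \overline{\alpha}^{f\dashv g}_{l\dashv u}$ and verify that both round-trips collapse to identities, using only the triangle identities of $(f\dashv g,\epsilon,\eta)$ and $(l\dashv u,\mu,\rho)$ together with horizontal--vertical interchange.

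First I would define $\Psi : \aaa(W,Y)(fn,ml)\to\aaa(X,Z)(nu,gm)$ by the dual pasting, so that for $\beta:fn\Rightarrow ml$ the composite $\Psi(\beta)$ is
$$
nu\;\xRightarrow{\;\eta\ast\id_{nu}\;}\;gfnu\;\xRightarrow{\;\id_g\ast\beta\ast\id_u\;}\;gmlu\;\xRightarrow{\;\id_{gm}\ast\mu\;}\;gm,
$$
obtained by pasting $\eta$ on the bottom-left and $\mu$ on the top-right of $\beta$.

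To verify $\Psi(\overline{\alpha}^{f\dashv g}_{l\dashv u})=\alpha$, I would expand the mate inside $\Psi$ to obtain a vertical composite of five $2$-cells involving $\eta,\rho,\alpha,\epsilon$ and $\mu$. Two applications of middle-four interchange then let me migrate the copy of $\eta$ past the $\rho$-slab and past $\alpha$, placing it directly adjacent to $\epsilon$; the triangle identity $(\id_g\ast\epsilon)\cdot(\eta\ast\id_g)=\id_g$ collapses this adjacent pair to an identity $2$-cell. A further interchange brings $\mu$ adjacent to $\rho$, and the triangle identity $(\id_u\ast\mu)\cdot(\rho\ast\id_u)=\id_u$ collapses the remaining pair, leaving $\alpha\cdot\id_{nu}=\alpha$. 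The symmetric round-trip $\overline{\Psi(\beta)}^{f\dashv g}_{l\dashv u}=\beta$ proceeds by the formally dual argument, invoking the complementary triangle identities $(\epsilon\ast\id_f)\cdot(\id_f\ast\eta)=\id_f$ and $(\mu\ast\id_l)\cdot(\id_l\ast\rho)=\id_l$.

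The main obstacle is purely bookkeeping: each pasting involves five horizontally composed $2$-cells spanning four objects, so domains and codomains must be tracked carefully through the interchange moves. Conceptually no new tool is required beyond interchange and the four triangle identities; once the right rearrangement is performed, the triangle identities do all the work, and mutual inverseness of $\overline{(\,\cdot\,)}^{f\dashv g}_{l\dashv u}$ and $\Psi$ yields the asserted bijection.
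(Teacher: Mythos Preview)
Your proposal is correct and is essentially the same approach as the paper's proof: the paper defines the inverse $\underline{\beta}^{f\dashv g}_{l\dashv u}$ by exactly the dual pasting you call $\Psi(\beta)$ and simply asserts that it is ``clearly the inverse''. Your account just makes explicit the interchange-and-triangle-identity verification that the paper leaves to the reader.
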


\begin{proof}
The map $\beta\mapsto \underline{\beta } ^{f\dashv g}_{l\dashv u} $ defined by
$$
\xymatrix@=4.2em{
&
X
\ar[r]^-{u}
\ar@{=}[d]
&
W
\ar[ld]|-{l}
\ar[d]^-{n}
\\
\underline{\beta } ^{(f\dashv g)}_{(l\dashv u)}
\ar@{}[r]|-{:=}
&
X
\ar@{{}{ }{}}@/^0.8pc/[ru]|-{\xLeftarrow{\mu } }
\ar[d]_-{m}
\ar@{}[r]|-{\xLeftarrow{\beta} }
&
Z
\ar[ld]|-{f}
\ar@{=}[d]
\\
&
Y
\ar[r]_-{g}
\ar@{{}{ }{}}@/_0.7pc/[ru]|-{\xLeftarrow{\eta } }
&
Z
}
$$
is clearly the inverse of $\alpha \mapsto \overline{\alpha } ^{(f\dashv g)}_{(l\dashv u)} $.
\end{proof}

Actually, we can say much more about this correspondence. For instance, this is  part of an isomorphism of double categories. More precisely, given 
a $2$-category $\aaa $, we define two double categories $\mathsf{RAdj}(\aaa ) $ and $\mathsf{LAdj} (\aaa )$. The objects and the horizontal arrows of both 
double categories 
are the objects and $1$-cells of $\aaa $, while the vertical arrows are adjunctions $(f\dashv g , \epsilon  , \eta  )   $ of $\aaa $. Given 
vertical arrows $(f\dashv g , \epsilon  , \eta  ): Z\to Y $, $(l\dashv u , \mu , \rho ): W\to X   $   and horizontal arrows $m: X\to Y $, $n: W\to Z $,
the squares of $\mathsf{RAdj}(\aaa ) $ are $2$-cells $\alpha : nu \Rightarrow gm $ of $\aaa $, while the squares of $\mathsf{LAdj} (\aaa )$ are $2$-cells
$\beta : fn\Rightarrow ml $. The composition of squares are given by pasting of $2$-cells, the composition of horizontal arrows is the composition 
of $1$-cells and the composition of vertical arrows is the composition of adjunctions as in $\aaa ^{\mathsf{adj}} $. It is clear, then, that the mate correspondence induces an isomorphism between $\mathsf{RAdj}(\aaa ) $ and $\mathsf{LAdj} (\aaa )$. In particular, the mate correspondence respects vertical and horizontal compositions. 

As a first application of these observations on the mate correspondence, we give another proof of the statement of Remark \ref{Adjuntos unicos capitulo 1 PhD}.
Indeed, in the context of Remark \ref{Adjuntos unicos capitulo 1 PhD}, we take the $2$-cells that are mates of the identity 
$$\xymatrix{ 
Z
\ar[d]_-{g}
\ar@{=}[r]
\ar@{}[rd]|-{=}
&
Z
\ar[d]^-{g}
\\
Y
\ar@{=}[r]
&
Y
}
$$
under the adjunctions $(\widetilde{f}\dashv g , \mu  , \rho )$ and $(f\dashv g , \epsilon , \eta ) $,
and under the adjunctions $(f\dashv g , \epsilon , \eta )$ and $(\widetilde{f}\dashv g , \mu  , \rho )$. They are respectively denoted by
$\psi : f\id _ {{}_{Y}}\Rightarrow \id _ {{}_{Z}} \widetilde{f} $ and $\psi ' : \widetilde{f}\id _ {{}_{Y}}\Rightarrow \id _ {{}_{Z}} f $
 (which actually are the $2$-cells of Remark \ref{Adjuntos unicos capitulo 1 PhD}). Since the mate correspondence preserves horizontal composition, 
the compositions  $\psi ' \psi $ and $\psi\psi ' $ 
 are respectively the mates of the $2$-cell $\id _ {{}_{Y}} g =  g\id _ {{}_{Z}} $ above  under $f\dashv g$ and itself, and under $\widetilde{f}\dashv \widetilde{g}$ and itself; that is to say, the identity on $f$ and the identity on $\widetilde{f}$. In particular, $\psi : f\Rightarrow   \widetilde{f} $ is an isomophism.

\begin{rem}\label{Adjuncoes em CAT Capitulo 1 PhD}
If $(f\dashv g , \epsilon , \eta ): Y\to Z $ is an adjunction in the $2$-category $\Cat $, we know that $Z(f-,-)\cong Y(-, g-) $. The mate correspondence 
generalizes this fact since, assuming now that $(l\dashv u , \mu , \rho ): Y'\to Z' $ is an adjunction in a $2$-category $\aaa $,  as a particular case of Theorem \ref{Mate Correspondence Theorem PhD}, we conclude that
$$\aaa (X , Y') (-, u-)\cong \aaa (X , Z')(l - , -) $$ 
for any object $X$ of $\aaa $. Still, up to size considerations, the Yoneda structure~\cite{MR0463261} of $\CAT $ implies that: given a functor $\widetilde{f}: Y\to Z $, $\widetilde{f}\dashv \widetilde{g} $ in $\Cat $ if and only if there is a natural isomorphism 
\begin{equation}\tag{$\varphi $}\label{phi PhD}
\xymatrix@=3em{
\CAT \left[Y ^\op, \SET \right]
\ar[rr]^-{ \CAT \left[ \widetilde{f} ^\op, \SET \right] }
&\ar@{}[d]|-{\cong }&
 \CAT \left[ Z ^\op, \SET \right]
\\
Y
\ar[rr]_-{\widetilde{g} }
\ar[u]^-{\YYYY _{Y } }
&&
Z
\ar[u]_-{\YYYY _{Z } }
}
\end{equation}
in which $\YYYY _ {Y }$ , $\YYYY _ {Z } $ denote the Yoneda embeddings. 
Moreover, it should be noted that, if $\widetilde{f}\dashv  \widetilde{g} $, then
$$\CAT \left[ \widetilde{f} ^\op, \SET \right] \dashv \CAT \left[ \widetilde{g} ^\op, \SET \right] $$
 by the $2$-functoriality of $\CAT \left[(-) ^\op, \SET \right]: \CAT ^{\co\op}\to\CAT $.
\end{rem}

It is clear that the images of the mates by $2$-functors are the mates of the images.
In the case of pseudofunctors, it follows from Lemma \ref{Fundamental lema PhD} that:

\begin{lem}\label{IMAGE OF MATE PSEUDOFUNCTOR}
Let $E: \aaa\to\bbb $ be a pseudofunctor and $(f\dashv g , \epsilon , \eta )$, $(l\dashv u , \mu , \rho ) $ 
adjunctions in $\aaa $. Given a $2$-cell $\alpha : nu\Rightarrow gm $,
$$\widehat{E\left( \overline{\alpha } ^{f\dashv g}_{l\dashv u}\right)} = \overline{\widehat{E(\alpha)} } ^{E(f)\dashv E(g)}_{E(l)\dashv E(u)},$$
in which
$\widehat{E\left( \overline{\alpha } ^{f\dashv g}_{l\dashv u}\right)} : = 
\mathfrak{e}_{{}_{ml}} ^{-1}
\cdot
E\left( \overline{\alpha } ^{f\dashv g}_{l\dashv u}\right)\cdot \mathfrak{e}_{{}_{fn}}$ and 
$\widehat{E(\alpha)}  : = \mathfrak{e}_{{}_{gm}} ^{-1}\cdot E(\alpha)\cdot  \mathfrak{e}_{{}_{nu}} $.
In other words, $$\mathfrak{e}_{{}_{ml}} ^{-1}
\cdot
E\left( \overline{\alpha } ^{f\dashv g}_{l\dashv u}\right)\cdot \mathfrak{e}_{{}_{fn}}$$ is the 
mate of $\mathfrak{e}_{{}_{gm}} ^{-1}\cdot E(\alpha)\cdot  \mathfrak{e}_{{}_{nu}}$ under $E(l)\dashv E(u) $ and
$E(f)\dashv E(g) $.
\end{lem}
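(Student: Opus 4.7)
The plan is to compute both sides of the claimed equation by writing the mate as an explicit pasting of three $2$-cells and then pushing the pseudofunctor $E$ through this pasting by repeated use of Lemma \ref{Fundamental lema PhD}. The right-hand side is then obtained by substituting the unit/counit of the induced adjunctions given by Lemma \ref{induced adjunction PhD} into the definition of Theorem \ref{Mate Correspondence Theorem PhD}, and the two sides should agree after the coherence isomorphisms $\mathfrak{e}$ are suitably reorganized using the associativity and unit axioms of a pseudofunctor.

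Concretely, first I would unfold $\overline{\alpha}^{f\dashv g}_{l\dashv u}$ as the pasting of $\rho : \id_{W} \Rightarrow ul$, $\alpha : nu \Rightarrow gm$ and $\epsilon : fg \Rightarrow \id_{Z}$ along the $1$-cells $n, m, f, g, l, u$ exactly as drawn in Theorem \ref{Mate Correspondence Theorem PhD}. Splitting this pasting into two successive binary pastings and applying Lemma \ref{Fundamental lema PhD} (once for each junction), one expresses $E\!\left(\overline{\alpha}^{f\dashv g}_{l\dashv u}\right)$ as a pasting in $\bbb$ of $E(\rho)$, $E(\alpha)$ and $E(\epsilon)$ together with explicit coherence cells $\mathfrak{e}_{ul}^{-1}, \mathfrak{e}_{W}, \mathfrak{e}_{gm}, \mathfrak{e}_{nu}^{-1}, \mathfrak{e}_{Z}^{-1}, \mathfrak{e}_{fg}$ sitting at the appropriate interfaces, plus the outer cells $\mathfrak{e}_{ml}$ and $\mathfrak{e}_{fn}^{-1}$ prescribed by the hat notation. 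On the other side, by Lemma \ref{induced adjunction PhD} the counit of $E(f)\dashv E(g)$ is $\mathfrak{e}_{Z}^{-1}\cdot E(\epsilon)\cdot \mathfrak{e}_{fg}$ and the unit of $E(l)\dashv E(u)$ is $\mathfrak{e}_{ul}^{-1}\cdot E(\rho)\cdot \mathfrak{e}_{W}$; substituting these into the pasting diagram which defines $\overline{\widehat{E(\alpha)}}^{E(f)\dashv E(g)}_{E(l)\dashv E(u)}$ in Theorem \ref{Mate Correspondence Theorem PhD} yields literally the same pasting of coherence data around $E(\rho)$, $E(\alpha)$, $E(\epsilon)$.

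The main obstacle will be the bookkeeping of the coherence isomorphisms: the two calculations produce coherence cells attached to different composite $1$-cells, and one has to check that they match after the $2$-cell pasting is flattened, using the associativity constraint and the two unit constraints of Definition 4.2.1~\cite{MR3491845}. This is essentially the same cancellation argument carried out in the proof of Lemma \ref{induced adjunction PhD}, with the $2$-cell $\alpha$ now sitting in the middle of the pasting rather than an identity; once one observes that Lemma \ref{Fundamental lema PhD} reduces the problem to the identities relating $\mathfrak{e}_{ml\cdot\id}$, $\mathfrak{e}_{\id\cdot fn}$, $\mathfrak{e}_{ml}\ast\id$ and $\id\ast\mathfrak{e}_{fn}$, the verification becomes a routine application of the coherence axioms with no further conceptual content.
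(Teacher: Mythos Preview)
Your proposal is correct and follows exactly the route the paper indicates: the paper gives no detailed proof at all, simply stating that the lemma ``follows from Lemma~\ref{Fundamental lema PhD}'', and your plan of splitting the three-cell mate pasting into two binary pastings, applying Lemma~\ref{Fundamental lema PhD} at each junction, and then matching the resulting coherence cells against those supplied by Lemma~\ref{induced adjunction PhD} is precisely how that one-line justification unpacks. The bookkeeping you anticipate is real but routine, just as you say, and is of the same nature as the cancellations already carried out in the proof of Lemma~\ref{induced adjunction PhD}.
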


We also have an important result relating mates and pseudonatural transformations:

\begin{lem}\label{Pseudonatural MATE PhD}
Let  $\lambda : E\longrightarrow L $ be a pseudonatural 
transformation between pseudofunctors. If $(f\dashv g , \epsilon , \eta ): Y\to Z  $ is an adjunction of $\aaa $, then 
the mate of
$$\xymatrix@=3em{
&
E(Y)
\ar[ld]_-{E(f)}
\ar[d]^-{\lambda _{{}_{Y}} }
\\
E(Z)
\ar[d]_-{\lambda _ {{}_{Z}} }
\ar@{}[r]|-{\xLeftarrow{\enspace\lambda _ {{}_{f}}\enspace  } }
&
L(Y)\ar[ld]^-{L(f) }
\\
L(Z)
&
}$$
under the adjunction   
$(E(f)\dashv E(g), 
\mathfrak{e}_{{}_{Z}}^{-1}\cdot E(\epsilon )\cdot \mathfrak{e}_{{}_{fg}} , 
\mathfrak{e}_{{}_{gf}}^{-1}\cdot E(\eta )\cdot \mathfrak{e}_{{}_{Y}})$ 
and $(L(f)\dashv L(g), 
\mathfrak{l}_{{}_{Z}}^{-1}\cdot L(\epsilon )\cdot \mathfrak{l}_{{}_{fg}} , 
\mathfrak{l}_{{}_{gf}}^{-1}\cdot L(\eta )\cdot \mathfrak{l}_{{}_{Y}})$ is equal
to $\lambda _{{}_{g}}^{-1} $.
\end{lem}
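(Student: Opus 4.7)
The first step is to match 1-cell types. With the adjunction $L(f) \dashv L(g)$ playing the role of ``$(f \dashv g)$'' of Theorem \ref{Mate Correspondence Theorem PhD} and $E(f) \dashv E(g)$ playing the role of ``$(l \dashv u)$'', and with $m := \lambda_Z$, $n := \lambda_Y$, the mate bijection becomes
\[
\bbb(E(Z), L(Y))(\lambda_Y E(g),\, L(g)\lambda_Z) \;\cong\; \bbb(E(Y), L(Z))(L(f)\lambda_Y,\, \lambda_Z E(f)).
\]
Since $\lambda_g : L(g)\lambda_Z \Rightarrow \lambda_Y E(g)$ is invertible by pseudonaturality, $\lambda_g^{-1}$ lies in the left-hand hom-set while $\lambda_f$ lies in the right-hand hom-set, so the statement asserts precisely that they are partners under this bijection.

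I would verify this by a direct pasting computation. By the defining formula for the mate and the formulas of Lemma \ref{induced adjunction PhD} for the induced unit and counit,
\[
\overline{\lambda_g^{-1}} \;=\; \bigl(\widehat{L(\epsilon)} \ast \lambda_Z E(f)\bigr)\cdot\bigl(L(f)\ast \lambda_g^{-1}\ast E(f)\bigr)\cdot\bigl(L(f)\lambda_Y\ast \widehat{E(\eta)}\bigr),
\]
where $\widehat{E(\eta)} := \mathfrak{e}_{gf}^{-1}\cdot E(\eta)\cdot \mathfrak{e}_Y$ and $\widehat{L(\epsilon)} := \mathfrak{l}_Z^{-1}\cdot L(\epsilon)\cdot \mathfrak{l}_{fg}$. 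The key ingredients are the three axioms of a pseudonatural transformation: naturality of $\lambda$ with respect to the 2-cells $\eta$ and $\epsilon$, relating $\lambda_{gf}\cdot (L(\eta)\lambda_Y)$ to $(\lambda_Y E(\eta))\cdot \lambda_{\id_Y}$ and $(\lambda_Z E(\epsilon))\cdot \lambda_{fg}$ to $\lambda_{\id_Z}\cdot (L(\epsilon)\lambda_Z)$; compatibility with composition, expressing $\lambda_{gf}$ as the pasting of $\lambda_g$ and $\lambda_f$ modulo the coherence $\mathfrak{l}_{gf}$, $\mathfrak{e}_{gf}$; and compatibility with identities, expressing $\lambda_{\id_Y}$, $\lambda_{\id_Z}$ via $\mathfrak{e}_Y, \mathfrak{l}_Y, \mathfrak{e}_Z, \mathfrak{l}_Z$.

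Substituting these expressions, I expect the coherence cells of $E$ and $L$ to cancel pairwise (by naturality of $\mathfrak{e}, \mathfrak{l}$ and the pseudofunctor axioms), leaving the two triangle identities of the adjunction $f\dashv g$ transported through $E$ and $L$; applying those collapses the composite to $\lambda_f$. Alternatively, a more structural argument is available: the mate construction is built entirely from whiskering and the unit/counit, both of which are respected by a pseudonatural transformation up to the specified coherence, so $\lambda$ must intertwine the mate bijections induced on $\aaa$ by $E$ and $L$, and the matching pair on an adjunction $f \dashv g$ is necessarily $(\lambda_f, \lambda_g^{-1})$.

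The main obstacle is purely organizational: the three layers of coherence 2-cells (those of $E$, of $L$, and of $\lambda$) must be arranged into one large pasting diagram and collapsed step by step. Conceptually the result is unsurprising, because the mate correspondence is natural in adjunctions and pseudonatural transformations interact coherently with whiskering and adjunction data; the same bookkeeping difficulties that appear in Lemma \ref{IMAGE OF MATE PSEUDOFUNCTOR} reappear here, and are handled in the same way.
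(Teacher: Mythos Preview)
Your approach is essentially the same as the paper's: both compute the mate explicitly as a pasting and reduce it using the pseudonaturality axioms of $\lambda$ together with a triangle identity of the adjunction. The only cosmetic differences are that the paper first simplifies to the case of $2$-functors (declaring the pseudofunctor case analogous) and then verifies the equivalent statement that $\underline{\lambda_f}\cdot\lambda_g$ equals the identity on $L(g)\lambda_Z$, rather than computing $\overline{\lambda_g^{-1}}$ directly; since the mate correspondence is a bijection and $\lambda_g$ is invertible, these amount to the same computation.
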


\begin{proof}
In order to simplify the terminology, we prove it for a pseudonatural 
transformation $\lambda : E\longrightarrow L $ between $2$-functors.
The proof in the case of pseudofunctors is analogous. 

The proof consists in verifying that the mate $\underline{\lambda _ {{}_{f}}  } ^{L(f)\dashv L(g)}_{E(f)\dashv E(g)} $ composed with $\lambda _ {{}_{g}} $ is equal to the identity $L(g)\lambda _ {{}_{Z}}\Rightarrow L(g)\lambda _ {{}_{Z}} $. Firstly, observe that this composition is equal to
$$
\xymatrix@C=4em@R=3em{
& 
E(Z)
\ar[rd]^-{\lambda _ {{}_{Z}} }
\ar@{=}[ld]
\ar[d]|-{E(g) }
\ar@{{}{ }{}}@/^0.8pc/[ld]|-{\xLeftarrow{ E(\epsilon ) } }
&
\\
E(Z)
\ar[dd]|-{\lambda _ {{}_{Z}} }
&
E(Y)
\ar[d]|-{\lambda _ {{}_{Y}} }
\ar[l]|-{E(f)}
&
L(Z)
\ar[ld]|-{L(g)}
\ar@{}[l]|-{\xLeftarrow{\enspace\lambda _ {{}_{g}}\enspace } }
\\
&
L(Y)\ar@{{}{ }{}}@/^1pc/[ld]|-{\xLeftarrow{ L(\eta ) } }
\ar@{}[l]|-{\xLeftarrow{\enspace\lambda _ {{}_{f}}\enspace } }
\ar[ld]|-{L(f)}
\ar@{=}[d]
&
\\
L(Z)
\ar[r]|-{L(g)}
&
L(Y)
&
}$$ 
Since $\lambda _ {{}_{\id _ {{}_{Z}} }} = \id _ {{}_{\lambda _ {{}_{Z}} }} $,  by Equations 1 and 3 
 of Definition 4.2.1~\cite{MR3491845}
we get that this composition is equal to
$$\xymatrix@C=4em@R=3em{
E(Z)\ar@{=}[dd]
\ar[r]|-{\lambda _ {{}_{Z}} }
\ar[rd]|-{E(g)}
&
L(Z)
\ar[rd]|-{L(g)}
&
\ar@{}[ddr]|-{=}
&
E(Z)
\ar[r]|-{\lambda _ {{}_{Z}} }
\ar@{=}[dd]
\ar@{}[ddr]|-{\xLeftrightarrow{\id _ {{}_{\lambda _{{}_{Z}} }}} }
&
L(Z)
\ar@{=}[dd]
\ar[dr]|-{L(g)}
&
\\
&
E(Y)
\ar@{}[l]|-{\xLeftarrow{E(\epsilon ) } }
\ar@{}[r]|-{\xLeftarrow{\enspace\lambda _ {{}_{fg}}\enspace } }
\ar[ld]|-{E(f)}
&
L(Y)
\ar@{=}[d]
\ar[ld]|-{L(f)}
\ar@{{}{ }{}}@/^1pc/[ld]|-{\xLeftarrow{ L(\eta ) } }
&
&
&
L(Y)
\ar@{}[l]|-{\xLeftarrow{ L(\epsilon ) } }
\ar[ld]|-{L(f)}
\ar@{=}[d]
\ar@{{}{ }{}}@/^1pc/[ld]|-{\xLeftarrow{ L(\eta ) } }
\\
E(Z)
\ar[r]|-{\lambda _ {{}_{Z}} }
&
L(Z)\ar[r]|-{L(g)}
&
L(Y)
&
E(Z)
\ar[r]|-{\lambda _ {{}_{Z}} }
&
L(Z)
\ar[r]|-{L(g)}
&
L(Y)
}$$
which is clearly equal to the identity $L(g)\lambda _ {{}_{Z}}\Rightarrow L(g)\lambda _ {{}_{Z}} $, since $(L(f)\dashv L(g), L(\epsilon ), L(\eta )) $ is an adjunction.
\end{proof}

\begin{defi}[Beck-Chevalley condition]\label{Beck-Chevalley Obsoleto PhD}
Let $(f\dashv g) := (f\dashv g , \epsilon  , \eta  ): Z\to Y   $
and $(l\dashv u) := (l\dashv u , \mu , \rho ): W\to X   $ be adjunctions in a $2$-category $\aaa $. Assume that $\alpha : nu \Rightarrow gm $ is a $2$-cell
of $\aaa $. We say that
$$\xymatrix@=2.5em{
&
X
\ar[ld]_-{u}
\ar[d]^-{m}
\\
W
\ar[d]_-{n}
\ar@{}[r]|-{\xRightarrow{\alpha } }
&
Y\ar[ld]^-{g}
\\
Z
&
}$$
satisfies the Beck-Chevalley condition if the mate of $\alpha $ 
under $l\dashv u $ and $f\dashv g $ is an invertible $2$-cell.
\end{defi}

Outside any context, the meaning of the Beck-Chevalley condition might seem vacuous. Even when some context is provided, it is many times considered as an isolated
 technical condition. In this thesis, however, this condition is always applied in the context of \textit{doctrinal adjunction}. More precisely, our informal perspective is that, whenever the Beck-Chevalley condition plays an important
role, we can frame our problem in terms of $2$-dimensional monad theory, getting a  problem of lifting of adjunctions of the base $2$-category
 to the $2$-category of pseudoalgebras. 
The Beck-Chevalley condition is precisely the obstruction condition to this lifting. 
The most important example of this approach in this thesis 
is Chapter 3 or, more specifically, the proof of the B\'{e}nabou-Robaud Theorem presented therein.

Below, we briefly explain the Beck-Chevalley condition within the context of $2$-dimensional monad theory. This section can be considered, then, as prerequisite to 
the understanding of Section 7 of Chapter 3~\cite{2016arXiv160604999L}, since herein we do not assume familiarity with the doctrinal adjunction. We also show in \ref{KZ-PSEUDOMONADA CAPITULO 1 PhD}
 how, within our context,
Kock-Z\"{o}berlein pseudomonads encompass the
 situation
of ``the Beck-Chevalley condition always holding''.

We start by showing the most elementary version of an important bijection between colax and lax $\TTTTT$-structures in adjoint morphisms.
Again, the mate correspondence is the basic technique to 
introduce this bijection. 

Let 
$\TTTTT $ be a pseudomonad
 on a $2$-category $\bbb $ and $g: Z\to Y $ a morphism of $\bbb $. Given lax $\TTTTT$-algebras
$\mathtt{y}=(Y, \alg _ {{}_{\mathtt{y}}}, \overline{{\underline{\mathtt{y}}}}, \overline{{\underline{\mathtt{y}}}}_0 )$ and $\mathtt{z}= (Z, \alg _ {{}_{\mathtt{z}}}, \overline{{\underline{\mathtt{z}}}}, \overline{{\underline{\mathtt{z}}}}_0 )$, the \textit{collection of lax $\TTTTT$-structures for $g:Z\to Y$ w.r.t. $\mathtt{z}$ and 
$\mathtt{y}$}, denoted by 
$\mathsf{Lax}\textrm{-}\TTTTT\textrm{-}\Alg _{\ell }(\mathtt{z}, \mathtt{y})_g $, is the pullback of
the inclusion of $g $ in the category of morphisms $\bbb (Z, Y) $, $\mathsf{1}\to \bbb (Z,Y) $, along
the functor $\mathsf{Lax}\textrm{-}\TTTTT\textrm{-}\Alg _{\ell } (\mathtt{z}, \mathtt{y})\to \bbb (Z,Y)  $ induced by the forgetful $2$-functor. Analogously, given a morphism $f: Y\to Z $ of $\bbb$,  $\mathsf{Lax}\textrm{-}\TTTTT\textrm{-}\Alg _{\mathsf{c}\ell}(\mathtt{y}, \mathtt{z})_f $ is the pullback of the inclusion of $f$ into $\bbb (Y,Z) $ along 
the forgetful functor $\mathsf{Lax}\textrm{-}\TTTTT\textrm{-}\Alg _{\mathsf{c}\ell } (\mathtt{y}, \mathtt{z})\to \bbb (Y,Z)  $.

It is clear, then, that
 a lax $\TTTTT $-morphism in $\mathsf{Lax}\textrm{-}\TTTTT\textrm{-}\Alg _{\ell}(\mathtt{z}, \mathtt{y})_g $ corresponds to a $2$-cell
$\left\langle \overline{\mathsf{g}}\right\rangle :\alg _ {{}_{\mathtt{y}}}\TTTTT (g)
\Rightarrow g\alg _ {{}_{\mathtt{z}}}    $ of $\bbb $
satisfying the axioms of Definition 5.4.1~\cite{2016arXiv160703087L}, while a colax $\TTTTT $-morphism in
$\mathsf{Lax}\textrm{-}\TTTTT\textrm{-}\Alg _{\mathsf{c}\ell}(\mathtt{y}, \mathtt{z})_f $
corresponds to a $2$-cell
$\left\langle   \overline{\mathsf{f}}\right\rangle :  f\alg _ {{}_{\mathtt{y}}}\Rightarrow\alg _ {{}_{\mathtt{z}}}\TTTTT (f)$ of $\bbb $ satisfying the axioms of Definition \ref{LAXALGEBRASCOLAX}. 

Moreover, we can consider the \textit{category of lax $\TTTTT$-structures for $f$ w.r.t. lax $\TTTTT $-algebras} which is the pullback of the inclusion of the morphism $f$ into $\bbb $,  $\mathsf{2}\to \bbb $, along the forgetful $2$-functor  $\mathsf{Lax}\textrm{-}\TTTTT\textrm{-}\Alg _{\ell } \to \bbb  $.  Finally, the 
 \textit{category of colax $\TTTTT$-structures for $g$ w.r.t. lax $\TTTTT $-algebras} is the pullback of the inclusion of the morphism $g$ into $\bbb $ along $\mathsf{Lax}\textrm{-}\TTTTT\textrm{-}\Alg _{\mathsf{c}\ell } \to \bbb  $.

\begin{theo}[Colax and lax structures in adjoints]\label{Main Theorem Doctrinal Correspondence PhD}
Let $\TTTTT $ be a pseudomonad on $\bbb $ and $$(f\dashv g , \epsilon  , \rho  ): Y\to Z   $$ an adjunction in $\bbb $. Given lax $\TTTTT$-algebras $\mathtt{y}=(Y, \alg _ {{}_{\mathtt{y}}}, \overline{{\underline{\mathtt{y}}}}, \overline{{\underline{\mathtt{y}}}}_0 )$ and $\mathtt{z}= (Z, \alg _ {{}_{\mathtt{z}}}, \overline{{\underline{\mathtt{z}}}}, \overline{{\underline{\mathtt{z}}}}_0 )$,
the mate correspondence 
under the adjunction  
$(\TTTTT (f)\dashv \TTTTT (g), 
\mathfrak{t}_{{}_{Z}}^{-1}\cdot \TTTTT (\epsilon )\cdot \mathfrak{t}_{{}_{fg}} , 
\mathfrak{t}_{{}_{gf}}^{-1}\cdot \TTTTT (\rho )\cdot \mathfrak{t}_{{}_{Y}})$
and the adjunction $(f\dashv g , \epsilon  , \rho  )$
induces a bijection
$$\diamond : \mathsf{Lax}\textrm{-}\TTTTT\textrm{-}\Alg _{\ell }(\mathtt{z}, \mathtt{y})_g
\cong \mathsf{Lax}\textrm{-}\TTTTT\textrm{-}\Alg _{\mathsf{c}\ell }(\mathtt{y}, \mathtt{z})_f. $$
These bijections induce an isomorphism between the category of lax $\TTTTT$-structures for $g: Z\to Y$
and category of colax $\TTTTT$-structures for $f: Y\to Z$ w.r.t. lax $\TTTTT $-algebras. 
\end{theo}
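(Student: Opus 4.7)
My plan is to implement the classical doctrinal adjunction argument in this $2$-dimensional monadic setting. First, by Lemma \ref{induced adjunction PhD}, the pseudofunctor $\TTTTT$ sends $f\dashv g$ to an adjunction $\TTTTT(f)\dashv \TTTTT(g)$ in $\bbb$. Applying the mate correspondence (Theorem \ref{Mate Correspondence Theorem PhD}) to this ``inner'' adjunction together with the original ``outer'' adjunction $f\dashv g$, with horizontal arrows $m:=\alg_{\mathtt{z}}$ and $n:=\alg_{\mathtt{y}}$, yields at the level of underlying $2$-cells a bijection
\[
\diamond\colon\bbb\bigl(\alg_{\mathtt{y}}\TTTTT(g),\,g\alg_{\mathtt{z}}\bigr)\;\cong\;\bbb\bigl(f\alg_{\mathtt{y}},\,\alg_{\mathtt{z}}\TTTTT(f)\bigr).
\]

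Next I show that $\diamond$ restricts to a bijection between lax and colax structures. Both the lax axioms and the colax axioms of Definition \ref{LAXALGEBRASCOLAX} are pasting equalities, and three facts make the transfer essentially automatic. (i) As noted after Theorem \ref{Mate Correspondence Theorem PhD}, the mate operation forms an isomorphism of double categories $\mathsf{RAdj}(\aaa)\cong\mathsf{LAdj}(\aaa)$, so it commutes with horizontal and vertical pasting. (ii) By Lemma \ref{Pseudonatural MATE PhD} applied to the pseudonatural transformations $m$ and $\eta$ of the pseudomonad, the mate of $m_f$ under the adjunctions $\TTTTT^2(f)\dashv\TTTTT^2(g)$ and $\TTTTT(f)\dashv\TTTTT(g)$ is $m_g^{-1}$, and similarly the mate of $\eta_f$ is $\eta_g^{-1}$. (iii) By Lemma \ref{IMAGE OF MATE PSEUDOFUNCTOR}, images under $\TTTTT$ of mates coincide with mates of images, modulo the coherence cells $\tttt$ and $\mathfrak{t}$. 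Now apply the mate operation to each side of the pasting equation expressing the multiplication axiom for a lax structure $\langle\overline{\mathsf{g}}\rangle$: by (i) the two occurrences of $\langle\overline{\mathsf{g}}\rangle$ are replaced by two occurrences of $\langle\overline{\mathsf{f}}\rangle:=\diamond\langle\overline{\mathsf{g}}\rangle$; by (ii) the copy of $m_g$ is converted into the corresponding copy of $m_f$; and by (iii) the $\TTTTT$-image pieces rearrange correctly, so that after absorbing the pseudofunctor coherence one recovers exactly the colax multiplication axiom. The unit axiom is handled identically using $\eta$ and the unit cells $\overline{\underline{\mathtt{y}}}_0,\overline{\underline{\mathtt{z}}}_0$. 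Invoking the inverse map $\underline{\,\cdot\,}$ from the proof of Theorem \ref{Mate Correspondence Theorem PhD} shows the correspondence goes both ways.

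Finally, for the claim about the categories of structures relative to varying lax $\TTTTT$-algebras, a morphism in either pullback is constrained by the forgetful functor to $\bbb$, so it suffices to observe that the pasting condition defining a $\TTTTT$-transformation between lax morphisms is sent, under $\diamond$, to the pasting condition defining a $\TTTTT$-transformation between the corresponding colax morphisms; this is again an immediate application of principle (i). The main obstacle is the careful pasting-diagram book-keeping needed in the second paragraph: one must track the coherence cells $\tttt$ of the pseudomonad together with those of $\TTTTT$ as a pseudofunctor and ensure each sits in the proper place when the mate is formed. Beyond this notational burden, every equivalence is forced by the fact that the mate correspondence is the formal device implementing the duality between left and right adjoints.
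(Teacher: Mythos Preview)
Your proposal is correct and follows essentially the same route as the paper: the paper's proof likewise reduces everything to the mate correspondence, invoking Lemma~\ref{IMAGE OF MATE PSEUDOFUNCTOR} and Lemma~\ref{Pseudonatural MATE PhD} to show that the two sides of the colax multiplicative (resp.\ unit) axiom are precisely the mates, under $\TTTTT^2(f)\dashv\TTTTT^2(g)$ and $f\dashv g$ (resp.\ under $f\dashv g$ and itself), of the two sides of the lax multiplicative (resp.\ unit) axiom. Your principles (i)--(iii) are exactly the ingredients the paper uses, and the paper, like you, leaves the final categorical claim essentially implicit once the object-level bijection is in hand.
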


\begin{proof}
Assume that $(f\dashv g , \epsilon  , \rho  ): Y\to Z   $ is an adjunction in $\bbb $ and
$(\TTTTT,  m  , \eta , \mu,   \iota, \tau ) $ is a pseudomonad on $\bbb $ as in the hypothesis. 
Given $2$-cells $\left\langle \overline{\mathsf{g}}\right\rangle :\alg _ {{}_{\mathtt{z}}}\TTTTT (g)
\Rightarrow g\alg _ {{}_{\mathtt{y}}}    $ and $\left\langle   \overline{\mathsf{f}}\right\rangle :  f\alg _ {{}_{\mathtt{y}}}\Rightarrow\alg _ {{}_{\mathtt{z}}}\TTTTT (f)$ that are mates under 
the adjunctions $(\TTTTT (f)\dashv \TTTTT (g), 
\mathfrak{t}_{{}_{Z}}^{-1}\cdot \TTTTT (\epsilon )\cdot \mathfrak{t}_{{}_{fg}} , 
\mathfrak{t}_{{}_{gf}}^{-1}\cdot \TTTTT (\rho )\cdot \mathfrak{t}_{{}_{Y}})$
and $(f\dashv g , \epsilon  , \rho  )$, we have that: 
\begin{enumerate}
\item By Lemmas \ref{IMAGE OF MATE PSEUDOFUNCTOR} and \ref{Pseudonatural MATE PhD}, the $2$-cells
$$\xymatrix@=2.8em{
&
\TTTTT Z\ar[r]^{\alg _ {{}_{\mathtt{z}}}}\ar@{}[dd]|-{\xLeftarrow{ m_{{}_{f}}  }}
&
Z\ar@{}[d]|-{\xLeftarrow{\left\langle\overline{\mathsf{f}}\right\rangle } }
&
&
&
\TTTTT ^2 Y\ar@{}[d]|-{\xRightarrow{\widehat{\TTTTT (\left\langle   \overline{\mathsf{f}}\right\rangle)}             } }
\ar[ld]_ -{\TTTTT (\alg _ {{}_{\mathtt{y}}}) }\ar[r]^-{\TTTTT ^2(f) }
&
\TTTTT ^2 Z\ar[rd]^-{m _ {{}_{Z}} }\ar[ld]|-{\TTTTT (\alg _ {{}_{\mathtt{z}}}) }\ar@{}[dd]|-{\xRightarrow{\enspace\overline{{\underline{\mathtt{z}}} }\enspace} }
&
\\
\TTTTT ^2 Z\ar[ru]^-{m _ {{}_{Z}} }
&
&
\TTTTT Y\ar[lu]|-{\TTTTT (f) }\ar[r]^{\alg _ {{}_{\mathtt{y}}} }\ar@{}[d]|-{\xLeftarrow{\overline{{\underline{\mathtt{y}}} } } }
&
Y\ar[lu]_ {f}
&
\TTTTT Y\ar[r]_{\TTTTT (f) }\ar[rd]_{\alg _ {{}_{\mathtt{y}}}}
&
\TTTTT Z\ar[rd]|-{\alg _ {{}_{\mathtt{z}}} }\ar@{}[d]|-{\xRightarrow{\left\langle   \overline{\mathsf{f}}\right\rangle} }
&
&
\TTTTT Z\ar[ld]^{\alg _ {{}_{\mathtt{z}}} }
\\
&
\TTTTT ^2Y\ar[ru]|-{m _ {{}_{Y}} }\ar[r]_{\TTTTT (\alg _ {{}_{\mathtt{y}}}) }\ar[lu]^{\TTTTT ^2(f)}
&
\TTTTT Y\ar[ru]_-{\alg _ {{}_{\mathtt{y}}} }
&
&
&
Y\ar[r]_{f}
&
Z
&
}$$
are respectively the mates of
$$\xymatrix@=2.8em{
&
\TTTTT ^2 Z\ar@{}[d]|-{\xLeftarrow{m_{{}_{g}}^{-1} } }\ar[ld]_ -{m _ {{}_{Z}} }\ar[r]^-{\TTTTT ^2(g) }
&
\TTTTT ^2 Y\ar[rd]^-{\TTTTT (\alg _ {{}_{\mathtt{y}}}) }\ar[ld]|-{m _ {{}_{Y}} }
\ar@{}[dd]|-{\xLeftarrow{\quad\overline{{\underline{\mathtt{y}}} }\quad } }
&
&
&
\TTTTT Y
\ar[r]^{\alg _ {{}_{\mathtt{y}}}}
\ar@{}[dd]|-{\xRightarrow{\widehat{\TTTTT (\left\langle   \overline{\mathsf{g}}\right\rangle)}}}
&
Y
\ar@{}[d]|-{\xRightarrow{\left\langle\overline{\mathsf{g}}\right\rangle } }
&
\\
\TTTTT Z
\ar[r]_-{\TTTTT (g) }\ar[rd]_{\alg _ {{}_{\mathtt{z}}}}
&
\TTTTT Y\ar[rd]|-{\alg _ {{}_{\mathtt{y}}} }
\ar@{}[d]|-{\xLeftarrow{\left\langle   \overline{\mathsf{g}}\right\rangle} }
&
&
\TTTTT Y
\ar[ld]^{\alg _ {{}_{\mathtt{y}}} }
&
\TTTTT ^2 Y\ar[ru]^-{\TTTTT (\alg _ {{}_{\mathtt{y}}} )}
&
&
\TTTTT Z\ar[lu]|-{\TTTTT (g) }\ar[r]^{\alg _ {{}_{\mathtt{z}}} }
\ar@{}[d]|-{\xRightarrow{\overline{{\underline{\mathtt{z}}} } } }
&
Z\ar[lu]_ {g}
\\
&
Z\ar[r]_{g}
&
Y
&
&
&
\TTTTT ^2 Z\ar[ru]|-{\TTTTT (\alg _ {{}_{\mathtt{z}}})}
\ar[r]_{m _ {{}_{Z}} }\ar[lu]^{\TTTTT ^2 (g)}
&
\TTTTT Z\ar[ru]_-{\alg _ {{}_{\mathtt{z}}} }
&
}$$
under the adjunctions $\TTTTT ^2 (f)\dashv \TTTTT ^2 (g)$ and $f\dashv g $.

\item By Lemma \ref{Pseudonatural MATE PhD}, the $2$-cells
$$\xymatrix@=2.8em{ 
Z\ar[d]_{\eta _ {{}_{Z}} }\ar@{}[rd]|-{\xLeftarrow{\eta _ {{}_{f}}} }
&
&
Y\ar[ld]|-{\eta _ {{}_{Y}} }\ar@{=}[dd]\ar[ll]_-{f}
&
&
Z\ar[ld]_{\eta _ {{}_{Z}} }\ar@{=}[dd]
&
\\
\TTTTT Z\ar[d]_-{\alg _ {{}_{\mathtt{z}}} }\ar@{}[rd]|-{\xLeftarrow{\left\langle   \overline{\mathsf{f}}\right\rangle}}
&
\TTTTT Y\ar@{}[r]|-{\xLeftarrow{\overline{{\underline{\mathtt{y}}}}_0} }\ar[rd]|-{\alg _ {{}_{\mathtt{y}}} }\ar[l]^-{\TTTTT (f)}
&
&
\TTTTT Z\ar@{}[r]|-{\xLeftarrow{\overline{{\underline{\mathtt{z}}}}_0 } }\ar[rd]_-{\alg _ {{}_{\mathtt{z}}}}
&
&
\\
Z
&
&
Y\ar[ll]^-{f}
&
&
Z
&
Y\ar[l]_{f}
}$$
are respectively the mates of
$$\xymatrix@=2.8em{ 
Z\ar[rr]^{g}\ar[d]_{\eta _ {{}_{Z}} }\ar@{}[rd]|-{\xLeftarrow{\eta _ {{}_{g}}^{-1}} }
&
&
Y\ar[ld]_-{\eta _ {{}_{Y}} }\ar@{=}[dd]
&
&
Z\ar[ld]_{\eta _ {{}_{Z}} }\ar@{=}[dd]
&
\\
\TTTTT Z\ar[r]_{\TTTTT (g)}\ar[d]_-{\alg _ {{}_{\mathtt{z}}} }\ar@{}[rd]|-{\xLeftarrow{\left\langle   \overline{\mathsf{g}}\right\rangle}}
&
\TTTTT Y\ar@{}[r]|-{\xLeftarrow{\overline{{\underline{\mathtt{y}}}}_0} }\ar[rd]_-{\alg _ {{}_{\mathtt{y}}} }
&
&
\TTTTT Z\ar@{}[r]|-{\xLeftarrow{\overline{{\underline{\mathtt{z}}}}_0 } }
\ar[rd]_{\alg _ {{}_{\mathtt{z}}}}
&
&
\\
Z\ar[rr]_-{g}
&
&
Y
&
&
Z\ar[r]_{g}
&
Y
}$$
under $f\dashv g$ and itself.
\end{enumerate}
Therefore $\left\langle \overline{\mathsf{g}}\right\rangle :\alg _ {{}_{\mathtt{z}}}\TTTTT (g)
\Rightarrow g\alg _ {{}_{\mathtt{y}}}    $ 
corresponds to a lax $\TTTTT $-morphism in $\mathsf{Lax}\textrm{-}\TTTTT\textrm{-}\Alg _{\ell}(\mathtt{z}, \mathtt{y})_g $ if and only if $\left\langle   \overline{\mathsf{f}}\right\rangle :  f\alg _ {{}_{\mathtt{y}}}\Rightarrow\alg _ {{}_{\mathtt{z}}}\TTTTT (f)$ corresponds to a colax $\TTTTT $-morphism
in $\mathsf{Lax}\textrm{-}\TTTTT\textrm{-}\Alg _{\mathsf{c}\ell}(\mathtt{y}, \mathtt{z})_f $.
\end{proof}

\begin{rem}
Clearly, we have dual results. For instance, given colax $\TTTTT $-algebras $\mathtt{z}, \mathtt{y}$ and an adjunction $(f\dashv g, \epsilon , \rho ): Y\to Z $
 in the
base $2$-category $\bbb $, 
we can analogously define the collections $\mathsf{Colax}\textrm{-}\TTTTT\textrm{-}\Alg _{\mathsf{c}\ell}(\mathtt{y}, \mathtt{z})_f$
and $\mathsf{Colax}\textrm{-}\TTTTT\textrm{-}\Alg _{\ell}(\mathtt{z}, \mathtt{y})_g$ of colax $\TTTTT $-structures for $f$ and lax $\TTTTT $-structures for 
$g$. The mate correspondence induces a bijection between such collections.
\end{rem}

Now, we can introduce the Beck-Chevalley condition in the context of lax $\TTTTT $-morphisms, being a particular case of Definition \ref{Beck-Chevalley Obsoleto PhD}. The relevance of this case is demonstrated in Theorem \ref{Doctrinal Adjunction Theorem PhD}.

\begin{defi}[Beck-Chevalley within $2$-dimensional monad theory]
Let $\TTTTT $ be a pseudomonad on $\bbb $ and $(f\dashv g , \epsilon  , \rho  ): Y\to Z   $ an adjunction in $\bbb $. Assume that $\mathtt{g} = (g, \left\langle \overline{\mathsf{g}}\right\rangle ): \mathtt{z}\to \mathtt{y}  $ is a lax $\TTTTT $-morphism between the
lax $\TTTTT$-algebras $\mathtt{z} = (Z, \alg _ {{}_{\mathtt{z}}}, \overline{{\underline{\mathtt{z}}}}, \overline{{\underline{\mathtt{z}}}}_0 )$
and $\mathtt{y} = (Y, \alg _ {{}_{\mathtt{y}}}, \overline{{\underline{\mathtt{y}}}}, \overline{{\underline{\mathtt{y}}}}_0 )$. 

We say that $\mathtt{g}$ \textit{satisfies the Beck-Chevalley condition} if the
corresponding colax $\TTTTT $-morphism $(f, \diamond \left\langle \overline{\mathsf{g}}\right\rangle  ) : \mathtt{z}\to\mathtt{y} $ via the bijection of Theorem \ref{Main Theorem Doctrinal Correspondence PhD} is a $\TTTTT $-pseudomorphism. That is to say, $\mathtt{g}$ satisfies the Beck-Chevalley condition if
\begin{equation*}
\diamond \left\langle \overline{\mathsf{g}}\right\rangle  =
\vcenter{\xymatrix@C=7em@R=3em{
\TTTTT Y
\ar[r]^-{\TTTTT (f )}
\ar@{=}[dd]
&
\TTTTT Z\ar@{{}{ }{}}@/_0.4pc/[ld]|-{\xRightarrow{ \mathfrak{t}_{{}_{gf}}^{-1}\cdot \TTTTT (\rho )\cdot \mathfrak{t}_{{}_{Y}} } }
\ar[ldd]|-{\TTTTT (g) }
\ar[dd]^-{\alg _ {{}_{\mathtt{z}}}  }
\\
&
\\
\TTTTT Y
\ar[dd]_-{\alg _ {{}_{\mathtt{y}}} }
\ar@{}[r]|-{\xRightarrow{\enspace\left\langle \overline{\mathsf{g}}\right\rangle\enspace} }
&
Z
\ar[ldd]|-{g}
\ar@{=}[dd]
\\
&
\\
Z\ar@{{}{ }{}}[ru]|-{\xRightarrow{\enspace\epsilon\enspace } }
\ar[r]_-{f}
&
Y
}}
\end{equation*}
is an invertible $2$-cell.
\end{defi}

Given a $2$-monad $\TTTTT $ on a $2$-category $\bbb $, while the forgetful $2$-functors  $\TTTTT\textrm{-}\Alg _{\textrm{s} }\to \bbb $ and
$ \mathsf{Ps}\textrm{-}\TTTTT\textrm{-}\Alg\to\bbb $ respectively reflect isomorphisms and equivalences, the forgetful $2$-functor 
$$\mathsf{Lax}\textrm{-}\TTTTT\textrm{-}\Alg _{\ell }\to \bbb $$ reflects right adjoints that satisfy the Beck-Chevalley condition. More generally, 
the \textit{Doctrinal Adjunction} characterizes when the unit and the counit of an adjunction satisfy the condition of being a $\TTTTT $-transformation (given in
in Eq. 3 of Definition 5.4.1~\cite{2016arXiv160703087L}).

\begin{theo}[Doctrinal Adjunction]\label{Doctrinal Adjunction Theorem PhD}
Let $\TTTTT $ be a pseudomonad on $\bbb $ and $$\mathtt{g} = (g, \left\langle \overline{\mathsf{g}}\right\rangle ): \mathtt{z}\to \mathtt{y}, \mathtt{f} = (f, \left\langle \overline{\mathsf{f}}\right\rangle ): \mathtt{y}\to \mathtt{z}  $$ lax $\TTTTT$-morphisms. Assume
that $(f\dashv g , \epsilon  , \rho  ): Y\to Z   $ is an adjunction in $\bbb $. The $2$-cells $\epsilon , \rho $ give $\TTTTT $-transformations 
$\widetilde{\epsilon } : \mathtt{f}\mathtt{g}\Rightarrow \id _ {{}_{\mathtt{z} }}$  and  $\widetilde {\rho }: \id _ {{}_{\mathtt{y}}}\Rightarrow \mathtt{g}\mathtt{f} $
if and only if  $\left\langle \overline{\mathsf{f}}\right\rangle $ is invertible and $\diamond \left\langle \overline{\mathsf{g}}\right\rangle  = \left\langle \overline{\mathsf{f}}\right\rangle ^{-1}$. In this case, $(\mathtt{f}\dashv \mathtt{g} , \widetilde{\epsilon }  , \widetilde{\rho }  )$ is an adjunction in $\mathsf{Lax}\textrm{-}\TTTTT\textrm{-}\Alg _{\ell } $.
\end{theo}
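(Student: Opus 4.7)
The plan is to translate each of the two $\TTTTT$-transformation conditions---one for $\widetilde{\epsilon}:\mathtt{f}\mathtt{g}\Rightarrow \id_{\mathtt{z}}$ and one for $\widetilde{\rho}:\id_{\mathtt{y}}\Rightarrow \mathtt{g}\mathtt{f}$---into an algebraic identity between $\left\langle\overline{\mathsf{f}}\right\rangle$ and $\diamond\left\langle\overline{\mathsf{g}}\right\rangle$, and then to recognise the resulting two identities as asserting that the former is a two-sided inverse to the latter. The bridge between ``pastings expressed via $g$'' and ``pastings expressed via $f$'' is the mate correspondence of Theorem \ref{Mate Correspondence Theorem PhD} applied to the adjunction $f\dashv g$ in $\bbb$ together with its image $\TTTTT(f)\dashv \TTTTT(g)$ produced by Lemma \ref{induced adjunction PhD}. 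Note that by construction $\diamond\left\langle\overline{\mathsf{g}}\right\rangle$ is a $2$-cell $f\alg_{\mathtt{y}}\Rightarrow \alg_{\mathtt{z}}\TTTTT(f)$, parallel but opposite in direction to the lax structure $\left\langle\overline{\mathsf{f}}\right\rangle:\alg_{\mathtt{z}}\TTTTT(f)\Rightarrow f\alg_{\mathtt{y}}$, so that their vertical composites make literal sense.

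First I would write out the $\TTTTT$-transformation condition for $\widetilde{\epsilon}$ explicitly. This is a pasting equality whose ingredients are: (i) the composition formula for $\left\langle\overline{\mathsf{f}\mathsf{g}}\right\rangle$, dual to the colax formula spelled out in Definition \ref{LAXALGEBRASCOLAX}, which exhibits $\left\langle\overline{\mathsf{f}\mathsf{g}}\right\rangle$ as the pasting of $\left\langle\overline{\mathsf{g}}\right\rangle$ with $\left\langle\overline{\mathsf{f}}\right\rangle$ mediated by the pseudofunctoriality constraint $\mathfrak{t}_{(f)(g)}$; (ii) the explicit identity $\TTTTT$-structure on $\mathtt{z}$ recorded in Remark \ref{Identidade de Lax Algebra PhD}; and (iii) the image under $\TTTTT$ of the counit $\epsilon$. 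Pasting the unit of $\TTTTT(f)\dashv\TTTTT(g)$ on one flank and the triangle identity for $f\dashv g$ on the other, and then invoking Lemma \ref{IMAGE OF MATE PSEUDOFUNCTOR} to identify $\widehat{\TTTTT(\diamond\left\langle\overline{\mathsf{g}}\right\rangle)}$ with the mate of $\widehat{\TTTTT(\left\langle\overline{\mathsf{g}}\right\rangle)}$, the equation collapses to the assertion that $\left\langle\overline{\mathsf{f}}\right\rangle$ composes with $\diamond\left\langle\overline{\mathsf{g}}\right\rangle$ to the identity on one side.

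The dual analysis for $\widetilde{\rho}$---now pasting the image of $\epsilon$ under $\TTTTT$ and invoking the other triangle identity---delivers the matching identity on the opposite side, so the two $\TTTTT$-transformation conditions together are precisely the condition that $\left\langle\overline{\mathsf{f}}\right\rangle$ is invertible with inverse $\diamond\left\langle\overline{\mathsf{g}}\right\rangle$. The final clause that $(\mathtt{f}\dashv\mathtt{g},\widetilde{\epsilon},\widetilde{\rho})$ is a genuine adjunction in $\mathsf{Lax}\textrm{-}\TTTTT\textrm{-}\Alg_{\ell}$ is then immediate: the forgetful $2$-functor $\mathsf{Lax}\textrm{-}\TTTTT\textrm{-}\Alg_{\ell}\to\bbb$ is locally faithful, the triangle identities for $(\mathtt{f}\dashv\mathtt{g})$ are equalities of $2$-cells, and they are sent to the triangle identities for $(f\dashv g)$, which hold by hypothesis.

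The main obstacle will be the bookkeeping of coherence data in the reduction step: one must track how the pseudofunctoriality isomorphisms $\mathfrak{t}_{(f)(g)}$ and $\mathfrak{t}_{(g)(f)}$ interact with the pseudomonad associator, and how they cancel when the paste is manipulated against $\TTTTT(\epsilon)$ and $\TTTTT(\rho)$. Lemmas \ref{Fundamental lema PhD}, \ref{IMAGE OF MATE PSEUDOFUNCTOR} and \ref{Pseudonatural MATE PhD} are exactly the technical tools that neutralise these constraints, respectively by computing images of pastings under pseudofunctors, by commuting $\TTTTT$ with the mate operation, and by identifying mates of pseudonaturality $2$-cells with their inverses; together they reduce the verification to the elementary triangle identities for $f\dashv g$.
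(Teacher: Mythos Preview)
Your proposal is correct and follows essentially the same route as the paper. The paper separates the two directions a bit more explicitly: for the forward implication it expands each of the two composites $\left\langle\overline{\mathsf{f}}\right\rangle\cdot\bigl(\diamond\left\langle\overline{\mathsf{g}}\right\rangle\bigr)$ and $\bigl(\diamond\left\langle\overline{\mathsf{g}}\right\rangle\bigr)\cdot\left\langle\overline{\mathsf{f}}\right\rangle$ as a pasting and simplifies each to an identity using one of the $\TTTTT$-transformation axioms together with a triangle identity; for the converse it observes that the relevant $\TTTTT$-transformation equation and the equation $\left\langle\overline{\mathsf{f}}\right\rangle\cdot\bigl(\diamond\left\langle\overline{\mathsf{g}}\right\rangle\bigr)=\id$ are mates of one another under the adjunction of identities and $f\dashv g$, so the bijectivity of the mate correspondence transports the identity back. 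Your more uniform phrasing (each $\TTTTT$-transformation axiom is equivalent to one of the two invertibility equations) amounts to the same computation, and your final remark that the triangle identities lift because the forgetful $2$-functor is locally faithful is exactly right.
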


\begin{proof}
Assume that $\epsilon , \rho $ give $\TTTTT $-transformations 
$\widetilde{\epsilon } : \mathtt{f}\mathtt{g}\Rightarrow \id _ {{}_{\mathtt{z} }}$  and  $\widetilde {\rho }: \id _ {{}_{\mathtt{y}}}\Rightarrow \mathtt{g}\mathtt{f} $ in $\mathsf{Lax}\textrm{-}\TTTTT\textrm{-}\Alg _{\ell } $. That is to say, by hypothesis, $\epsilon , \rho $ satisfy Eq. 3  of Definition 5.4.1~\cite{2016arXiv160703087L}. 
Denoting $\mathfrak{t}_{{}_{gf}}^{-1}\cdot \TTTTT (\rho )\cdot \mathfrak{t}_{{}_{Y}}$ by $\widehat{\TTTTT (\rho )} $, we have that $\left\langle\overline{\mathsf{f}}\right\rangle \cdot \left( \diamond \left\langle \overline{\mathsf{g}}\right\rangle\right) $ and 
$\left( \diamond \left\langle \overline{\mathsf{g}}\right\rangle\right)\cdot \left\langle\overline{\mathsf{f}}\right\rangle $ are respectively equal to
\begin{equation*}
\vcenter{
\xymatrix@C=4.8em@R=5em{
\TTTTT Y
\ar[dr]|-{\TTTTT (f) }
\ar[d]|-{\alg _ {{}_{\mathtt{y}}} }
&
&
\TTTTT Y
\ar[ll]|-{\id _ {{}_{\TTTTT Y }} }
\ar@{}[ld]|-{\xLeftarrow{\widehat{\TTTTT (\rho )} } }
\ar@{=}[dd]
\\
Y
\ar@{}[r]|-{\xLeftarrow{\left\langle \overline{\mathsf{f}}\right\rangle } }
\ar[dr]|-{f}
&
\TTTTT Z
\ar[dr]|-{\TTTTT (g) }
\ar[d]|-{\alg _ {{}_{\mathtt{z}}} }
&
\\
&
Z
\ar@{{}{ }{}}@/_1pc/[dr]|-{ \xLeftarrow{\epsilon }}
\ar@{=}[d]
\ar@{}[r]|-{ \xLeftarrow{\left\langle \overline{\mathsf{g}}\right\rangle }  }
\ar[dr]|-{g}
&
\TTTTT Y
\ar[d]|-{\alg _ {{}_{\mathtt{y}}}}
\\
&
Z
&
Y\ar[l]|-{f}
}}
\quad\mbox{and}\quad
\vcenter{
\xymatrix@C=4.8em@R=5em{
\TTTTT Z
\ar[dr]|-{\TTTTT (g) }
\ar[d]|-{\alg _ {{}_{\mathtt{z}}} }
\ar@{{}{ }{}}@/^1.5pc/[dr]|-{ \xLeftarrow{\widehat{\TTTTT (\rho )} } }
&
\TTTTT Y\ar@{=}[d]
\ar[l]|-{\TTTTT (f)}
&
\\
Z
\ar@{}[r]|-{\xLeftarrow{\left\langle \overline{\mathsf{g}}\right\rangle } }
\ar[dr]|-{g}
\ar@{=}[dd]
&
\TTTTT Y
\ar[d]|-{\alg _ {{}_{\mathtt{y}}}}
\ar[dr]|-{\TTTTT (f)}
&
\\
&
Y
\ar[dr]|-{f}
\ar@{}[r]|-{\xLeftarrow{\left\langle \overline{\mathsf{f}}\right\rangle } }
&
\TTTTT Z
\ar[d]|-{\alg _ {{}_{\mathtt{z}}}}
\\
Z
\ar@{}[ur]|-{\xLeftarrow{\quad\epsilon\quad } }
&
&
Z\ar[ll]|-{\id _ {{}_{Z}} }
}
}
\end{equation*}
which, by Eq. 3  of Definition 5.4.1 and the triangle identities (of Definition \ref{Adjuncao Capitulo 1 PhD}), are respectively equal to the identities $f\alg _ {{}_{\mathtt{y}}}\Rightarrow f\alg _ {{}_{\mathtt{y}}}$ and $\alg _ {{}_{\mathtt{z}}}\TTTTT (f)\Rightarrow\alg _ {{}_{\mathtt{z}}}\TTTTT (f)$. Therefore the proof of the first part is complete.

Reciprocally, assume now that $\diamond \left\langle \overline{\mathsf{g}}\right\rangle  = \left\langle \overline{\mathsf{f}}\right\rangle ^{-1}$. 
We prove below that $\rho $ gives a $\TTTTT $-transformation $\widetilde {\rho }: \id _ {{}_{\mathtt{y}}}\Rightarrow \mathtt{g}\mathtt{f} $. On one hand,
the mate of the $2$-cell
\begin{equation*}
\left(
\vcenter{
\xymatrix@C=5em@R=4em{
\TTTTT Y
\ar@/_2pc/[dd]|-{\TTTTT (\id _ {{}_{Y}} ) }
\ar@/^2pc/[dd]|-{\TTTTT (gf)}
\ar@{}[dd]|-{\xRightarrow{\TTTTT (\rho ) } }
\ar[r]|-{\alg _ {{}_{\mathtt{y}}} }
\ar@{{}{ }{}}@/^3.8pc/[dd]|-{\xRightarrow{\left\langle \overline{\mathsf{g}\mathsf{f}}\right\rangle} }
&
Y
\ar[dd]|-{gf  }
\\
&
\\
\TTTTT Y
\ar[r]|-{\alg _ {{}_{\mathtt{y}}}  }
&
Y
}
}
\right)\cdot \left( \id _ {{}_{\alg _ {{}_{\mathtt{y}}}}}\ast \mathfrak{t} _{{}_{Y}} \right)\enspace =\enspace
\vcenter{
\xymatrix@C=5em@R=4em{
\TTTTT Y
\ar@{=}[dd]
\ar[dr]|-{\TTTTT (f) }
\ar@{{}{ }{}}@/^2pc/[dd]|-{\xRightarrow{\widehat{\TTTTT (\rho )} } }
\ar[rr]|-{\alg _ {{}_{\mathtt{y}}} }
&
&
Y
\ar[d]|-{f  }
\ar@{}[ld]|-{\xRightarrow{ \left\langle \overline{\mathsf{f}}\right\rangle }  }
\\
&
\TTTTT Z
\ar@{}[rd]|-{\xRightarrow{\left\langle \overline{\mathsf{g}}\right\rangle } }
\ar[r]|-{\alg _ {{}_{\mathtt{z}}} }
\ar[ld]|-{\TTTTT (g) }
&
Z
\ar[d]|-{g}
\\
\TTTTT Y
\ar[rr]|-{\alg _ {{}_{\mathtt{y}}}  }
&
&
Y
}
}
\end{equation*}
under the adjunction of identities and the adjunction $(f\dashv g, \epsilon , \rho ) $ is equal to $ \left\langle \overline{\mathsf{f}}\right\rangle
\cdot \left(\diamond \left\langle \overline{\mathsf{g}}\right\rangle\right)  $ which by hypothesis is equal to $\id _{{}_{f\alg _ {{}_{\mathtt{y}}} }}$. On the other hand, the mate of $\rho \ast \id _{{}_{\alg _ {{}_{\mathtt{y}}}}} $ under the adjunction of identities and the adjunction $(f\dashv g, \epsilon , \rho ) $
is also equal to  $\id _{{}_{f\alg _ {{}_{\mathtt{y}}} }}$ by the triangle identity. Therefore, by the mate correspondence (Theorem \ref{Mate Correspondence Theorem PhD}), we conclude that the left side of the equation above is equal to   $\rho \ast \id _{{}_{\alg _ {{}_{\mathtt{y}}}}} $. Therefore
$$\xymatrix@=3em{  \TTTTT Y\ar@/_5ex/[dd]|-{\TTTTT (\id _ {{}_{Y}} ) }
                    \ar@{}[dd]|{\xRightarrow{\TTTTT(\rho ) } }
                    \ar@/^5ex/[dd]|-{\TTTTT (gf) }
										\ar[rr]^{\alg _ {{}_{\mathtt{y}}} } && 
										 Y\ar[dd]^{gf }    
&&
\TTTTT Y \ar[rr]^{ \alg _ {{}_{\mathtt{y}}} }\ar@/_3.3ex/[dd]|-{\TTTTT (\id _ {{}_{Y}} ) }  &&
Y\ar@/_3.3ex/[dd]|-{\id _ {{}_{Y}}}
                    \ar@{}[dd]|{\xRightarrow{\rho  } }
                    \ar@/^3.3ex/[dd]|-{gf }
\\
&\ar@{}[r]|{\xRightarrow{ \left\langle\overline{\mathsf{g}\mathsf{f}}\right\rangle  } }   &
 &=& 
&\ar@{}[l]|{\xRightarrow{\id _{{}_{\alg _ {{}_{\mathtt{y}}}}}\ast \mathfrak{t} _{{}_{Y}}^{-1} } }  & 
\\
 \TTTTT Y\ar[rr]_ {\alg _ {{}_{\mathtt{y}}} } &&  Y
 &&
\TTTTT Y\ar[rr]_ {\alg _ {{}_{\mathtt{y}}} } && Y	 }$$ 
which, by Remark \ref{Identidade de Lax Algebra PhD}, shows that $\rho $ satisfies Eq. 3 of Definition 5.4.1~\cite{2016arXiv160703087L}. This proves that indeed $\rho $ gives a $\TTTTT $-transformation $\widetilde {\rho }: \id _ {{}_{\mathtt{y}}}\Rightarrow \mathtt{g}\mathtt{f} $.  The proof for $\epsilon $ is analogous.
\end{proof}

\begin{coro}\label{BeckChevalley 2monadas Corolario 1 Capitulo 1 PhD}
Let $U:\mathsf{Lax}\textrm{-}\TTTTT\textrm{-}\Alg _{\ell }\to \bbb $ be the forgetful $2$-functor. Given a lax $\TTTTT $-morphism
$\mathtt{f}: \mathtt{y}\to \mathtt{z}  $:
\begin{itemize}\renewcommand\labelitemi{--}
\item $\mathtt{f} $ is left adjoint in $\mathsf{Lax}\textrm{-}\TTTTT\textrm{-}\Alg _{\ell } $ if and only if $U(\mathtt{f} ) $ is left adjoint
and $\mathtt{f} $ is a $\TTTTT $-pseudomorphism;
\item $\mathtt{f} $ is right adjoint if and only if $U(\mathtt{f} ) $ is right adjoint
and $\mathtt{f} $ satisfies the Beck-Chevalley condition.
\end{itemize}
\end{coro}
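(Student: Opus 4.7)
The plan is to deduce both biconditionals directly from Theorem \ref{Doctrinal Adjunction Theorem PhD} combined with the bijection $\diamond$ of Theorem \ref{Main Theorem Doctrinal Correspondence PhD}. The two items will be handled simultaneously, since they correspond to the two sides of a single adjunction lift: the ``only if'' direction of each is driven by the same observation, and the ``if'' direction for each consists in transporting the missing structure $2$-cell across the adjunction using $\diamond$.

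For the ``only if'' direction, I would start by noting that, since $U$ is a $2$-functor, any adjunction $(\mathtt{f}\dashv \mathtt{g},\widetilde{\epsilon},\widetilde{\rho})$ in $\mathsf{Lax}\textrm{-}\TTTTT\textrm{-}\Alg_\ell$ descends via $U$ to an adjunction $(U(\mathtt{f})\dashv U(\mathtt{g}),\,U(\widetilde{\epsilon}),\,U(\widetilde{\rho}))$ in $\bbb$. By construction the $2$-cells $U(\widetilde{\epsilon}),\,U(\widetilde{\rho})$ come from $\TTTTT$-transformations, hence Theorem \ref{Doctrinal Adjunction Theorem PhD} applies in its forward implication to give simultaneously that $\left\langle\overline{\mathsf{f}}\right\rangle$ is invertible (so $\mathtt{f}$ is a $\TTTTT$-pseudomorphism) and that $\diamond\left\langle\overline{\mathsf{g}}\right\rangle=\left\langle\overline{\mathsf{f}}\right\rangle^{-1}$ is invertible (so $\mathtt{g}$ satisfies the Beck--Chevalley condition).

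For the ``if'' direction of the first item, assume $U(\mathtt{f})=f$ has a right adjoint $g$ in $\bbb$, with unit $\rho$ and counit $\epsilon$, and $\mathtt{f}$ is a pseudomorphism. Then $\left\langle\overline{\mathsf{f}}\right\rangle^{-1}$ is an invertible $2$-cell in the colax direction, and, as in Remark \ref{Importante Mas Negligenciado PhD}, it satisfies the colax axioms because the lax axioms for $\left\langle\overline{\mathsf{f}}\right\rangle$ are equivalent to the colax axioms for $\left\langle\overline{\mathsf{f}}\right\rangle^{-1}$ once invertibility is available. Applying $\diamond^{-1}$ from Theorem \ref{Main Theorem Doctrinal Correspondence PhD} produces a lax $\TTTTT$-structure $\left\langle\overline{\mathsf{g}}\right\rangle$ on $g$, and by construction $\diamond\left\langle\overline{\mathsf{g}}\right\rangle=\left\langle\overline{\mathsf{f}}\right\rangle^{-1}$. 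The backward implication of Theorem \ref{Doctrinal Adjunction Theorem PhD} then lifts $(f\dashv g,\,\epsilon,\,\rho)$ to an adjunction in $\mathsf{Lax}\textrm{-}\TTTTT\textrm{-}\Alg_\ell$, showing $\mathtt{f}$ is left adjoint. The ``if'' direction of the second item is dual: given that $U(\mathtt{g})=g$ has a left adjoint $f$ and that $\mathtt{g}$ satisfies Beck--Chevalley, set $\left\langle\overline{\mathsf{f}}\right\rangle:=\bigl(\diamond\left\langle\overline{\mathsf{g}}\right\rangle\bigr)^{-1}$; invertibility of $\diamond\left\langle\overline{\mathsf{g}}\right\rangle$ and the same inversion principle for the colax/lax axioms provide a $\TTTTT$-pseudomorphism structure on $f$ with the correct relation to $\left\langle\overline{\mathsf{g}}\right\rangle$, so Theorem \ref{Doctrinal Adjunction Theorem PhD} again supplies the lift.

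No substantial obstacle remains once these two theorems are in place. The only routine technical point is the equivalence between the colax axioms for an invertible structure $\beta$ and the lax axioms for $\beta^{-1}$, which follows from a direct diagrammatic manipulation and is implicit in the inclusion $2$-functor of Remark \ref{Importante Mas Negligenciado PhD}.
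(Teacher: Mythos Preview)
Your proposal is correct and follows precisely the approach the paper intends: the corollary is stated immediately after Theorem \ref{Doctrinal Adjunction Theorem PhD} with no explicit proof, so it is meant to be the direct combination of that theorem with the bijection $\diamond$ of Theorem \ref{Main Theorem Doctrinal Correspondence PhD}, exactly as you have written it out. The only point worth tightening is the routine check that inverting an invertible lax structure yields a colax structure (and vice versa), which you rightly flag and which is implicit in the inclusion of Remark \ref{Importante Mas Negligenciado PhD}.
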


In the case of pseudomorphisms, the second condition remains equally, but, for the case of lifting of left adjoints, we still need to assure
that the right adjoint is going to be a pseudomorphism. More precisely:

\begin{coro}\label{BeckChevalley 2monadas Corolario 2 Capitulo 1 PhD}
Let $U:\mathsf{Lax}\textrm{-}\TTTTT\textrm{-}\Alg _\ell \to \bbb $ be the forgetful $2$-functor. Given a $\TTTTT $-pseudomorphism
$\mathtt{f}= (f, \left\langle \overline{\mathsf{f}}\right\rangle ): \mathtt{y}\to \mathtt{z}  $:
\begin{itemize}\renewcommand\labelitemi{--}
\item $\mathtt{f} $ is left adjoint in $\mathsf{Lax}\textrm{-}\TTTTT\textrm{-}\Alg _\ell  $ if and only if $U(\mathtt{f} ) $ is left adjoint
and $\diamond ^{-1}\left\langle \overline{\mathsf{f}}\right\rangle  $ is an invertible $2$-cell;
\item $\mathtt{f} $ is right adjoint if and only if $U(\mathtt{f} ) $ is right adjoint
and $\mathtt{f} $ satisfies the Beck-Chevalley condition.
\end{itemize}
\end{coro}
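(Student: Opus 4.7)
The plan is to deduce this corollary from the already-stated Corollary \ref{BeckChevalley 2monadas Corolario 1 Capitulo 1 PhD} (lax case) together with the Doctrinal Adjunction Theorem \ref{Doctrinal Adjunction Theorem PhD}. The pseudomorphism hypothesis makes the ``$\mathtt{f}$ is a $\TTTTT $-pseudomorphism'' clause of Corollary \ref{BeckChevalley 2monadas Corolario 1 Capitulo 1 PhD} automatic; the new content is the requirement that the adjoint partner stay inside pseudomorphisms. In both items the argument therefore reduces to asking whether the lax $\TTTTT $-morphism produced on the adjoint by Doctrinal Adjunction happens to have invertible structure $2$-cell.

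For the second item, I would start from Corollary \ref{BeckChevalley 2monadas Corolario 1 Capitulo 1 PhD}: assuming $U(\mathtt{f})$ has a left adjoint $l$ in $\bbb $ and $\mathtt{f}$ satisfies the Beck-Chevalley condition, Doctrinal Adjunction produces a lax $\TTTTT $-morphism $\mathtt{l}$ on $l$ whose structure $\left\langle \overline{\mathsf{l}}\right\rangle $ is determined by $\left\langle \overline{\mathsf{l}}\right\rangle ^{-1} = \diamond \left\langle \overline{\mathsf{f}}\right\rangle $. But the Beck-Chevalley condition on $\mathtt{f}$ is precisely the invertibility of $\diamond \left\langle \overline{\mathsf{f}}\right\rangle $, so $\left\langle \overline{\mathsf{l}}\right\rangle $ is automatically invertible and $\mathtt{l}$ is already a pseudomorphism. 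Hence no additional hypothesis beyond Corollary \ref{BeckChevalley 2monadas Corolario 1 Capitulo 1 PhD} is needed, which is what the remark ``the second condition remains equally'' formalizes.

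For the first item, applying the first part of Corollary \ref{BeckChevalley 2monadas Corolario 1 Capitulo 1 PhD} (with the pseudomorphism clause already satisfied) shows that $\mathtt{f}$ is a left adjoint in $\mathsf{Lax}\textrm{-}\TTTTT\textrm{-}\Alg _{\ell }$ precisely when $U(\mathtt{f})$ is a left adjoint in $\bbb $. In that case, Doctrinal Adjunction equips the right adjoint $g$ of $U(\mathtt{f})$ with a canonical lax structure $\left\langle \overline{\mathsf{g}}\right\rangle $, characterized by $\diamond \left\langle \overline{\mathsf{g}}\right\rangle = \left\langle \overline{\mathsf{f}}\right\rangle ^{-1}$; that is, $\left\langle \overline{\mathsf{g}}\right\rangle $ is the image under $\diamond ^{-1}$ of the colax structure $\left\langle \overline{\mathsf{f}}\right\rangle ^{-1}$. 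The lift to a pseudomorphism requires $\left\langle \overline{\mathsf{g}}\right\rangle $ to be invertible, and by the naturality of the mate bijection $\diamond $ this is equivalent to $\diamond ^{-1}\left\langle \overline{\mathsf{f}}\right\rangle $ being invertible, which is exactly the stated condition.

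The main obstacle is the careful justification that the mate bijection $\diamond $ from Theorem \ref{Main Theorem Doctrinal Correspondence PhD} carries invertible $2$-cells to invertible $2$-cells, since inverting a $2$-cell swaps its source and target, which are fixed in the bijection. This must be read off the explicit pasting description in Theorem \ref{Mate Correspondence Theorem PhD} and the triangle identities: the inverse of a mate can be identified with the mate of the inverse taken under the formally dual mate correspondence. Once this is recorded, both items fall out by a direct combination of Corollary \ref{BeckChevalley 2monadas Corolario 1 Capitulo 1 PhD} with Theorem \ref{Doctrinal Adjunction Theorem PhD}.
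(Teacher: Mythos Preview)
Your approach is correct and matches the paper's. The paper gives no explicit proof of this corollary; it is meant to be read off directly from Theorem~\ref{Doctrinal Adjunction Theorem PhD} and Corollary~\ref{BeckChevalley 2monadas Corolario 1 Capitulo 1 PhD}, exactly as you do, with the prefatory sentence ``we still need to assure that the right adjoint is going to be a pseudomorphism'' signalling the one new ingredient you correctly isolate.

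One remark: the ``main obstacle'' you flag in your last paragraph is not actually needed. For the first item, Theorem~\ref{Doctrinal Adjunction Theorem PhD} already pins down the unique lax structure on the right adjoint $g$ as $\left\langle \overline{\mathsf{g}}\right\rangle = \diamond^{-1}\big(\left\langle \overline{\mathsf{f}}\right\rangle^{-1}\big)$, and the condition ``$\diamond^{-1}\left\langle \overline{\mathsf{f}}\right\rangle$ is invertible'' in the statement is just shorthand for this specific $2$-cell being invertible. You therefore do not need a general lemma to the effect that mate bijections reflect invertibility; the claim reduces tautologically to the invertibility of the concrete $\left\langle \overline{\mathsf{g}}\right\rangle$ produced by doctrinal adjunction. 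The same comment applies, mutatis mutandis, to the second item.
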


\subsection{Kock-Z\"{o}berlein pseudomonads}\label{KZ-PSEUDOMONADA CAPITULO 1 PhD}
The concept of Kock-Z\"{o}berlein doctrine was originally introduced by Kock~\cite{MR1359690} and Z\"{o}berlein~\cite{ManuscritoZoberleinKZdoctrines}. We adopt the natural extended notion of Kock-Z\"{o}berlein pseudomonad~\cite{MR1432190}, called herein lax idempotent pseudomonad.  

Furthermore, since, in our context, the most important property of a lax idempotent pseudomonad $\TTTTT$ is the fact that the forgetful
$2$-functor $\mathsf{Lax}\textrm{-}\TTTTT\textrm{-}\Alg _\ell\to\bbb $ is fully faithful,
we get a shortcut, defining lax idempotent pseudomonads via this property: 

\begin{defi}\label{Our definition of KZ}
A pseudomonad $\TTTTT $ on a $2$-category $\bbb $ is called a \textit{lax idempotent} if the forgetful $2$-functor  $\mathsf{Lax}\textrm{-}\TTTTT\textrm{-}\Alg _\ell\to\bbb $ is fully faithful (meaning that it is locally an isomorphism). 
\end{defi}

It should be noted that our definition is actually equivalent to the usual Kock-Z\"{o}berlein adjoint property as stated below: the proof of this fact for the strict case 
is originally given in \cite{MR1476422}.
Since the Kock-Z\"{o}berlein adjoint property has no important role in our observation, we avoid the proof.

\begin{prop}
A pseudomonad $(\TTTTT,  m  , \eta , \mu,   \iota, \tau )$ is lax idempotent if and only if it satisfies the \textit{Kock-Z\"{o}berlein adjoint structure property}: that is to say, there 
 is a modification $\gamma : \Id _ {{}_{\TTTTT ^2 }}\Longrightarrow (\eta \TTTTT )(m)  $ such that $(m\dashv \eta \TTTTT , \iota , \gamma ) $ is an adjunction. 
\end{prop}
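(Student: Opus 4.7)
The plan is to prove both implications using the mate correspondence (Theorem \ref{Mate Correspondence Theorem PhD}) together with uniqueness arguments coming from local isomorphism of the forgetful $2$-functor $U\colon \mathsf{Lax}\textrm{-}\TTTTT\textrm{-}\Alg_\ell \to \bbb$.

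For the implication $(\Leftarrow)$, assume $(m \dashv \eta\TTTTT,\, \iota,\, \gamma)$ is an adjunction of pseudonatural transformations. The first step is to show that for every lax $\TTTTT$-algebra $\mathtt{z}$ the structural 2-cells $\overline{\underline{\mathtt{z}}}$ and $\overline{\underline{\mathtt{z}}}_0$ are invertible and that there is an induced adjunction $\alg_{\mathtt{z}} \dashv \eta_Z$ in $\bbb$; the counit is produced from $\overline{\underline{\mathtt{z}}}_0$ and the unit from pasting $\gamma_Z$ with $\overline{\underline{\mathtt{z}}}$ and $\mu$, the triangle identities reducing to the lax-algebra axioms together with those of $m_Z \dashv \eta_{\TTTTT Z}$. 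Given these adjunctions, for any morphism $f\colon Y \to Z$ between lax algebras $\mathtt{y}, \mathtt{z}$, the mate correspondence under $\alg_{\mathtt{y}} \dashv \eta_Y$ and $\alg_{\mathtt{z}} \dashv \eta_Z$ produces a unique candidate 2-cell $\left\langle \overline{\mathsf{f}} \right\rangle\colon \alg_{\mathtt{z}}\TTTTT(f) \Rightarrow f\alg_{\mathtt{y}}$, namely the mate of the pseudonaturality constraint of $\eta$ at $f$; the lax-morphism axioms for this $\left\langle \overline{\mathsf{f}} \right\rangle$ then follow from the coherence of $\TTTTT$ together with Lemmas \ref{Fundamental lema PhD} and \ref{Pseudonatural MATE PhD}. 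For $2$-cells, any $\mathfrak{m}\colon f \Rightarrow h$ in $\bbb$ lifts automatically because the $\TTTTT$-transformation equation reduces under the same adjunctions to an equality forced by naturality of mates and pseudonaturality of $\eta$.

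For the implication $(\Rightarrow)$, assume $U$ is locally an isomorphism. For each object $X$, view $\eta_{\TTTTT X}\colon \TTTTT X \to \TTTTT^2 X$ as a morphism between the free lax $\TTTTT$-algebras on $X$ and on $\TTTTT X$. By local isomorphism of $U$ there is a unique lax-morphism structure
$$\phi_X \colon m_{\TTTTT X}\,\TTTTT(\eta_{\TTTTT X}) \Rightarrow \eta_{\TTTTT X}\, m_X.$$
Since the right-unit coherence $\tau_{\TTTTT X}$ provides an isomorphism $\id_{\TTTTT^2 X} \cong m_{\TTTTT X}\,\TTTTT(\eta_{\TTTTT X})$, one defines $\gamma_X$ as the pasting of $\phi_X$ with $\tau_{\TTTTT X}^{-1}$. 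The modification axiom (pseudonaturality in $X$) follows from the uniqueness of $\phi_X$ transported along morphisms. The triangle identities
$$(\iota \ast m) \cdot (m \ast \gamma) = \id_m \qquad \text{and} \qquad (\eta\TTTTT \ast \iota) \cdot (\gamma \ast \eta\TTTTT) = \id_{\eta\TTTTT}$$
are then checked by applying $U$: both sides of each identity become $2$-cells in $\bbb$ between lax morphisms with the same underlying $1$-cell, and local isomorphism forces them to coincide.

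The principal obstacle is the verification of the coherence conditions in both directions. In $(\Leftarrow)$, checking that the mate-defined $\left\langle \overline{\mathsf{f}} \right\rangle$ satisfies the two lax-morphism axioms amounts to substantial pasting calculations involving $\overline{\underline{\mathtt{y}}},\, \overline{\underline{\mathtt{z}}},\, \mu,\, \overline{\underline{\mathtt{y}}}_0,\, \overline{\underline{\mathtt{z}}}_0$ and $\iota$; these are made tractable by Lemmas \ref{IMAGE OF MATE PSEUDOFUNCTOR} and \ref{Pseudonatural MATE PhD} but still require careful bookkeeping. In $(\Rightarrow)$, the analogous challenge is the triangle identities, handled by invoking uniqueness of lifts across $U$ rather than by direct pasting.
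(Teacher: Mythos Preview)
The paper does not prove this proposition. Immediately before the statement it notes that ``the proof of this fact for the strict case is originally given in \cite{MR1476422}'', and immediately after it states: ``Since the Kock-Z\"{o}berlein adjoint property has no important role in our observation, we avoid the proof.'' There is therefore nothing in the paper to compare your proposal against.

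Your outline follows the standard route (Kelly's argument, adapted to the pseudo setting). One step in your $(\Leftarrow)$ direction deserves more care: you begin by asserting that for an arbitrary \emph{lax} $\TTTTT$-algebra $\mathtt{z}$ the structural $2$-cells $\overline{\underline{\mathtt{z}}}$ and $\overline{\underline{\mathtt{z}}}_0$ are invertible, and you then want to use $\overline{\underline{\mathtt{z}}}_0^{-1}$ as the counit of $\alg_{\mathtt{z}} \dashv \eta_Z$. But the invertibility of $\overline{\underline{\mathtt{z}}}_0$ is not an input here --- it is one of the things to be established, and it does not follow immediately from $m \dashv \eta\TTTTT$ alone. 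You should either (a) construct the adjunction $\alg_{\mathtt{z}} \dashv \eta_Z$ first without assuming invertibility, by manufacturing both unit and counit out of $\gamma_Z$, $\overline{\underline{\mathtt{z}}}$, $\mu$ and the lax-algebra axioms, and only then deduce that $\overline{\underline{\mathtt{z}}}_0$ is invertible a posteriori; or (b) avoid arbitrary lax algebras altogether and argue via free algebras, where the relevant adjunction is $m_X \dashv \eta_{\TTTTT X}$ by hypothesis, producing the unique lax structure on $f$ by factoring through $\TTTTT(f)$. Either route closes the gap. Your $(\Rightarrow)$ direction, exploiting uniqueness of lifts through the locally isomorphic $U$ to force the triangle identities, is clean and correct.
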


In some situations, it can be easier to verify whether a pseudomonad $\TTTTT$ satisfies the Kock-Z\"{o}berlein adjoint property than to verify whether the
forgetful $2$-functor $\mathsf{Lax}\textrm{-}\TTTTT\textrm{-}\Alg _\ell\to\bbb $ is fully faithful. 
However our main observation on lax idempotent pseudomonads relies on the last property. More precisely:

\begin{rem}\label{Obvio KZPSEUDOMONADAS Capitulo 1 PhD}
Given a lax idempotent pseudomonad $\TTTTT$, with a forgetful $2$-functor $U: \mathsf{Lax}\textrm{-}\TTTTT\textrm{-}\Alg _\ell \to\bbb$,  it is clear that:
\begin{itemize}\renewcommand\labelitemi{--}
\item The forgetful $2$-functor $\mathsf{Ps}\textrm{-}\TTTTT\textrm{-}\Alg _\ell \to\bbb$ is fully faithful as well;
\item Given an object $Z$ of $\bbb $, if there is a lax $\TTTTT$-algebra $\mathtt{z}$ such that $U(\mathtt{z} ) = Z $, it is unique up to isomorphism; 
\item For every adjunction $(f\dashv U(\mathtt{g} ), \epsilon  , \rho  ): Y\to Z   $ in $\bbb $, there is an adjunction 
$(\mathtt{f}\dashv \mathtt{g} , \widetilde{\epsilon }  , \widetilde{\rho }  )$ in $\mathsf{Lax}\textrm{-}\TTTTT\textrm{-}\Alg$
such that $U(\mathtt{f} ) = f$;
\item For every adjunction $(U(\mathtt{f} )\dashv g, \epsilon  , \rho  ): Y\to Z   $ in $\bbb $, there is an adjunction 
$(\mathtt{f}\dashv \mathtt{g} , \widetilde{\epsilon }  , \widetilde{\rho }  )$ in $\mathsf{Lax}\textrm{-}\TTTTT\textrm{-}\Alg$
such that $U(\mathtt{g} ) = g$.
\end{itemize}
\end{rem}

By Corollaries \ref{BeckChevalley 2monadas Corolario 1 Capitulo 1 PhD},\ref{BeckChevalley 2monadas Corolario 2 Capitulo 1 PhD} and, by Remark \ref{Obvio KZPSEUDOMONADAS Capitulo 1 PhD}, we get that, for every lax $\TTTTT $-morphism $\mathtt{g}: \mathtt{z}\to\mathtt{y} $ such that $U(\mathtt{g}) $ is right adjoint,
$\mathtt{g}$ satisfies   the Beck-Chevalley condition. More precisely:

\begin{coro}\label{KZpseudomonadasBECKchevalley Capitulo 1 PhD}
Assume that $\TTTTT$ is a lax idempotent pseudomonad, $U:\mathsf{Lax}\textrm{-}\TTTTT\textrm{-}\Alg _{\ell }\to \bbb $ is the forgetful $2$-functor
and  $(f\dashv g, \epsilon , \rho ): U(\mathtt{y})\to U(\mathtt{z}) $ is an adjunction in $\bbb $. 
\begin{itemize}\renewcommand\labelitemi{--}
\item There is only one lax $\TTTTT $-morphism $\mathtt{f}: \mathtt{y}\to \mathtt{z} $ such that $U(\mathtt{f}) = f $. Furthermore,
$\mathtt{f} $ is a $\TTTTT $-pseudomorphism which is left adjoint in $\mathsf{Lax}\textrm{-}\TTTTT\textrm{-}\Alg _{\ell }$;
\item There is only one lax $\TTTTT $-morphism $\mathtt{g}: \mathtt{z}\to \mathtt{y} $ such that $U(\mathtt{g}) = g $. Furthermore,
$\mathtt{g} $ is right adjoint to $\mathtt{f} $ in $\mathsf{Lax}\textrm{-}\TTTTT\textrm{-}\Alg _{\ell } $ and $\mathtt{g} $ satisfies the Beck-Chevalley condition. 
\end{itemize}
\end{coro}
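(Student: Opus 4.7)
The plan is to assemble the results already collected in this section; no calculation with pseudomonad coherence data is needed. The single input is the fact that, by Definition \ref{Our definition of KZ}, the forgetful $U$ is locally an isomorphism of hom-categories, together with the Doctrinal Adjunction Theorem \ref{Doctrinal Adjunction Theorem PhD} and Corollary \ref{BeckChevalley 2monadas Corolario 1 Capitulo 1 PhD}.

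First I would handle existence and uniqueness of the lifts. Full faithfulness of $U$ yields an isomorphism $\mathsf{Lax}\textrm{-}\TTTTT\textrm{-}\Alg _{\ell}(\mathtt{y},\mathtt{z})\cong \bbb(U(\mathtt{y}),U(\mathtt{z}))$, so there is a unique lax $\TTTTT$-morphism $\mathtt{f}:\mathtt{y}\to\mathtt{z}$ with $U(\mathtt{f})=f$ and a unique $\mathtt{g}:\mathtt{z}\to\mathtt{y}$ with $U(\mathtt{g})=g$. The same isomorphism applied at the level of $2$-cells lifts $\epsilon$ and $\rho$ uniquely to $\TTTTT$-transformations $\widetilde{\epsilon}:\mathtt{f}\mathtt{g}\Rightarrow \id_{\mathtt{z}}$ and $\widetilde{\rho}:\id_{\mathtt{y}}\Rightarrow \mathtt{g}\mathtt{f}$; here I must invoke Remark \ref{Identidade de Lax Algebra PhD} to identify the structure $2$-cells of identities, so that $U(\id_{\mathtt{z}})=\id_{U(\mathtt{z})}$ and $U$ honestly sends $\mathtt{f}\mathtt{g}$ to $fg$.

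Next I would apply the Doctrinal Adjunction Theorem \ref{Doctrinal Adjunction Theorem PhD} to the lax lifts $\mathtt{f},\mathtt{g}$ and the underlying adjunction $(f\dashv g,\epsilon,\rho)$. Since the lifts $\widetilde{\epsilon},\widetilde{\rho}$ produced above are by definition $\TTTTT$-transformations, the theorem forces the structure $2$-cell $\langle\overline{\mathsf{f}}\rangle$ to be invertible, shows that $\diamond\langle\overline{\mathsf{g}}\rangle = \langle\overline{\mathsf{f}}\rangle^{-1}$, and delivers the adjunction $(\mathtt{f}\dashv\mathtt{g},\widetilde{\epsilon},\widetilde{\rho})$ in $\mathsf{Lax}\textrm{-}\TTTTT\textrm{-}\Alg _{\ell}$. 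In particular, the triangle identities in the lax algebras $2$-category are free, because $U$ is faithful on $2$-cells and the triangle identities of $(f\dashv g,\epsilon,\rho)$ hold in $\bbb$; the invertibility of $\langle\overline{\mathsf{f}}\rangle$ is exactly the statement that $\mathtt{f}$ is a $\TTTTT$-pseudomorphism.

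The Beck--Chevalley clause for $\mathtt{g}$ now drops out of the right-adjoint half of Corollary \ref{BeckChevalley 2monadas Corolario 1 Capitulo 1 PhD}: once $\mathtt{g}$ is known to be right adjoint in $\mathsf{Lax}\textrm{-}\TTTTT\textrm{-}\Alg _{\ell}$ and $U(\mathtt{g})=g$ is right adjoint in $\bbb$, the corollary's only remaining content is precisely the Beck--Chevalley condition (which is also visible directly from $\diamond\langle\overline{\mathsf{g}}\rangle = \langle\overline{\mathsf{f}}\rangle^{-1}$). I do not anticipate any genuine obstacle; the only point requiring mild care is the bookkeeping in the middle paragraph, namely checking that the $2$-cells produced by full faithfulness of $U$ are literally the triangle identities demanded inside $\mathsf{Lax}\textrm{-}\TTTTT\textrm{-}\Alg _{\ell}$ and not merely their $U$-images.
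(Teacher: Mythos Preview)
Your proposal is correct and follows essentially the same route as the paper. The paper's own justification is terse: it cites Remark \ref{Obvio KZPSEUDOMONADAS Capitulo 1 PhD} (which records, without proof, that full faithfulness of $U$ forces adjunctions to lift) together with Corollaries \ref{BeckChevalley 2monadas Corolario 1 Capitulo 1 PhD} and \ref{BeckChevalley 2monadas Corolario 2 Capitulo 1 PhD}; your argument simply unpacks that remark by appealing directly to the Doctrinal Adjunction Theorem \ref{Doctrinal Adjunction Theorem PhD}, which is exactly the mechanism underlying those corollaries.
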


This shows how Kock-Z\"{o}berlein pseudomonads encompass situations when ``the Beck-Chevalley conditions always hold''. In other words, given
such a lax idempotent pseudomonad, whenever $g$ is a right
adjoint between objects in the base $2$-category that can be endowed with lax $\TTTTT $-algebra structure, the unique lax $\TTTTT $-structure for $g$ 
always satisfies the Beck-Chevalley condition.

We can now work 
on examples of lax idempotent pseudomonads. 
Besides the idempotent pseudomonads of Chapter 3~\cite{2016arXiv160604999L}, the examples we should
mention are the cocompletion pseudomonads~\cite{MR1776428, MR2211425}, which motivated the definition of Kock-Z\"{o}berlein doctrines.
Our aim is to show how the elementary result that says that left adjoints preserve colimits can
be stated in our context.

The definition of cocompletion $2$-monads for enriched categories is given in \cite{MR1776428} and, by direct verification, via 
Definition \ref{Our definition of KZ} or via the Kock-Z\"{o}berlein adjoint property, one can see that cocompletion $2$-monads are lax
idempotent pseudomonads. 
We are more interested on the particular case of 
 the Kock-Z\"{o}berlein pseudomonad of cocompletion on $\CAT $. 
In order to work out our example, 
we need the list of well known  properties of the cocompletion pseudomonad below. 
Although we do not present proofs, they can be found in any of the main references~\cite{MR651714}.

\begin{enumerate}
\item There exists a lax idempotent pseudomonad $(\PPPPP,  m  , \eta , \mu,   \iota, \tau )$ on $\CAT $ such that $\PPPPP X $ is the free cocompletion of $X$ for every
category $X$;
\item The $\PPPPP $-pseudoalgebras (as the lax $\PPPPP$-algebras) are the cocomplete categories;
\item Clearly, since $\PPPPP $ is lax idempotent, the lax $\PPPPP $-morphisms are just functors between cocomplete categories;
\item If $\mathtt{f}= (f, \left\langle \overline{\mathsf{f}}\right\rangle ): \mathtt{y}\to \mathtt{z}  $ is a lax $\PPPPP $-morphism, $ \left\langle \overline{\mathsf{f}}\right\rangle $ is given by the natural comparisons of the colimit of the image and the image of the colimit of diagrams. In particular, the  $\PPPPP $-pseudomorphisms are exactly the cocontinuous functors (which means that they preserve all the colimits).
\end{enumerate}

By Corollary \ref{KZpseudomonadasBECKchevalley Capitulo 1 PhD}, we get that right adjoint functors between cocomplete categories always satisfy the
Beck-Chevalley condition w.r.t. the pseudomonad $\PPPPP $. Or, in other words, left adjoints between cocomplete categories always induce $\PPPPP $-pseudomorphisms.

\begin{coro}\label{Left Adjoint Capitulo 1 PhD}
Left adjoint functors between cocomplete categories are cocontinuous.
\end{coro}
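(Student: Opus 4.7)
The plan is to derive this corollary as a direct application of Corollary \ref{KZpseudomonadasBECKchevalley Capitulo 1 PhD} to the lax idempotent pseudomonad $\PPPPP$ of free cocompletion on $\CAT$, exploiting the characterizations of its pseudoalgebras and pseudomorphisms.

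First I would set up the data. Let $f : Y \to Z$ be a left adjoint functor between cocomplete categories, with right adjoint $g : Z \to Y$ and unit/counit $\rho, \epsilon$. By property (2) of the cocompletion pseudomonad, both $Y$ and $Z$ admit lax $\PPPPP$-algebra structures; call these $\mathtt{y}$ and $\mathtt{z}$, so that $U(\mathtt{y}) = Y$ and $U(\mathtt{z}) = Z$ where $U : \mathsf{Lax}\textrm{-}\PPPPP\textrm{-}\Alg _\ell \to \CAT$ is the forgetful $2$-functor.

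Next, I would invoke Corollary \ref{KZpseudomonadasBECKchevalley Capitulo 1 PhD} applied to the adjunction $(f \dashv g, \epsilon, \rho) : Y \to Z$ in $\CAT$ and the lax idempotent pseudomonad $\PPPPP$. The first bullet of that corollary guarantees the existence of a unique lax $\PPPPP$-morphism $\mathtt{f} : \mathtt{y} \to \mathtt{z}$ with $U(\mathtt{f}) = f$, and asserts moreover that $\mathtt{f}$ is a $\PPPPP$-pseudomorphism. Finally, by property (4) of the cocompletion pseudomonad recalled immediately before the statement, a $\PPPPP$-pseudomorphism between cocomplete categories is precisely a cocontinuous functor, so $f = U(\mathtt{f})$ is cocontinuous.

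There is essentially no obstacle; the content has already been packaged into Corollary \ref{KZpseudomonadasBECKchevalley Capitulo 1 PhD}. The only mild subtlety worth noting is that one must remember that the lax $\PPPPP$-algebra structure on a cocomplete category is determined up to isomorphism (by Remark \ref{Obvio KZPSEUDOMONADAS Capitulo 1 PhD}), so the choice of $\mathtt{y}$ and $\mathtt{z}$ is immaterial, and the identification of $\PPPPP$-pseudomorphisms with cocontinuous functors does not depend on which particular pseudoalgebra structures were selected. In spirit, this corollary is the Kock--Zöberlein repackaging of the familiar fact that left adjoints preserve colimits: the Beck--Chevalley obstruction that would normally have to be verified by hand vanishes automatically because $\PPPPP$ is lax idempotent.
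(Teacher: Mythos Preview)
Your proposal is correct and follows essentially the same approach as the paper: the paper derives the corollary directly from Corollary~\ref{KZpseudomonadasBECKchevalley Capitulo 1 PhD} applied to the lax idempotent pseudomonad $\PPPPP$, using that cocomplete categories are the $\PPPPP$-pseudoalgebras and that $\PPPPP$-pseudomorphisms are precisely the cocontinuous functors. Your write-up is in fact slightly more explicit than the paper's one-line justification.
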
 
 
This shows how our result on Beck-Chevalley condition for Kock-Z\"{o}berlein pseudomonads can be seen as a generalization of this elementary result.
Of course, left adjoints in general preserve colimits. But we can see this fact as a consequence of Corollary \ref{Left Adjoint Capitulo 1 PhD}.
More precisely,
given an adjunction $(\widetilde{f}\dashv \widetilde{g} ) : Y\to Z $ in $\CAT $, by Yoneda embedding, there is  
an extension  $$(\underline{\widetilde{f}}\dashv \underline{\widetilde{g} } ): \left( \CAT \left[ Y, \SET \right]\right) ^\op\to 
\left( \CAT \left[ Z, \SET \right]\right) ^\op $$ which commutes up to isomorphism with the Yoneda embeddings. This means in particular
that we have an invertible natural transformation
\begin{equation*}
\xymatrix@=3em{
\left( \CAT \left[Y, \SET \right]\right) ^\op
\ar[rr]^-{\underline{\widetilde{f}}  }
&\ar@{}[d]|-{\cong }&
\left( \CAT \left[ Z , \SET \right]\right) ^\op
\\
Y
\ar[rr]_-{\widetilde{f} }
\ar[u]^-{\YYYY _{Y^\op } ^\op }
&&
Z
\ar[u]_-{\YYYY _{Z^\op } ^\op }
}
\end{equation*}
 which completes the argument that $\widetilde{f}$ preserves colimits, since the Yoneda embeddings $\YYYY _{Y^\op } ^\op, \YYYY _{Z^\op } ^\op $ preserve and reflect colimits and 
$\underline{\widetilde{f}} $ preserves colimits by Corollary \ref{Left Adjoint Capitulo 1 PhD}.

Of course, the usual argument works very similarly. That is to say, by Remark \ref{Adjuncoes em CAT Capitulo 1 PhD}, 
$$\CAT \left[ \widetilde{f} ^\op, \SET \right] \dashv  \CAT \left[ \widetilde{g} ^\op, \SET \right] .$$
We have that $\CAT \left[ \widetilde{f} ^\op , \SET \right] $ preserves limits, since the the limits of  
$\CAT \left[ Y ^\op , \SET \right], \CAT \left[ Z^\op, \SET \right] $ are constructed pointwise.
By the natural isomorphism \ref{phi PhD} of Remark \ref{Adjuncoes em CAT Capitulo 1 PhD}, since 
 $\YYYY _{Y }, \YYYY _{Z }  $
preserve and reflect limits, we conclude that $\widetilde{g} $ preserves limits as well.

\section{Pseudo-Kan Extensions and Descent Theory}\label{Capitulo3 PhD}

Chapter 3  is the article  
\textit{Pseudo-Kan Extensions and descent theory}~\cite{2016arXiv160604999L}, under review.
We give a formal approach to descent theory, framing classical descent theory in the context of idempotent pseudomonads. Within this perspective, we recast
and generalize 
most of the classical results of the context of \cite{MR1466540, MR1285884}, including transfer results, embedding results and the B\'{e}nabou-Roubaud Theorem.

The chapter starts by giving an outline of the setting, presenting basic problems and results of the classical context of descent theory. 
We give an outline of the classical results that are proved and generalized in that paper, including the results mentioned above.

In Section 2 of Chapter 3~\cite{2016arXiv160604999L}, we prove 
theorems on pseudoalgebra structures and biadjoint triangles, always focusing in the case of idempotent pseudomonads. The main advantages
on focusing our study on idempotent pseudomonads are the following: the pseudoalgebra structures w.r.t. an idempotent pseudomonad 
 are easier to study. In this case, 
if a pseudoalgebra structure over an object $X$ exists, it is unique  (up to isomorphism) and, moreover, the pseudoalgebra structure over an object $X$
exists if and only if the unit of the pseudomonad on  $X$ is an equivalence. This fact allows us to study situations when we ``almost have'' a pseudoalgebra structure over an object $X$, which
correspond to the situations when the component of the unit on $X$ is faithful or fully faithful. This is important, later, to study
descent and almost descent morphisms.

The results on pseudoalgebra structures and biadjoint triangles give the formal account
to study descent theory. In order to study classical descent theory in the context of \cite{MR1466540}, the first step 
was to give results on commutativity of bilimits. More precisely, we firstly give a direct approach to prove 
an analogue of Fubini's Theorem
for weighted bilimits. This allows us to construct pointwise pseudo-Kan extensions and prove the basic results about them. Secondly, 
since  we prove that
$$\left[\t , \bbb\right]_{PS}: \left[\dot{\aaa } , \bbb\right]_{PS}\to \left[ \aaa , \bbb \right]  $$
is pseudomonadic and induces an idempotent pseudomonad whenever $\t $ is locally fully faithful and $\dot{\aaa } $ is a small $2$-category,
we are able to get results on commutativity of bilimits as direct consequences of our results  
on pseudomonadic pseudofunctors. 

Section 4 of Chapter 3~\cite{2016arXiv160604999L} introduces the descent objects, giving key results to finally frame the classical context of descent theory. The main 
result of this section is that the conical bilimit of a pseudocosimplicial object is its descent object. In other words, it shows that our definition
of descent object coincides with the usual definition (as, for instance, given in \cite{MR574662}). More concisely, within the language of pseudo-Kan extensions,
we prove that $$\Ps\Ran _ \j \AAA (\mathsf{0}) \simeq \Ps\Ran _ {\j _{{}_{3}} }\AAA \t _ {{}_3} $$ in which 
$\AAA : \Delta \to\hhh $ is a pseudofunctor, $\hhh $ is a bicategorically complete $2$-category, $\j : \Delta \to\dot{\Delta } $ is the full inclusion of the category of finite nonempty ordinals 
into the category of finite ordinals and order preserving functions, $\t _ {{}_3} : \Delta _ {{}_{3}} \to \Delta  $ 
is the inclusion of the $2$-category given by the faces and degeneracies 
$$\xymatrix{  \mathsf{1}\ar@<1.7 ex>[rrr]^-{d^0}\ar@<-1.7ex>[rrr]_-{d^1} &&& \mathsf{2}\ar[lll]|-{s^0}
\ar@<1.7 ex>[rrr]^{\partial ^0}\ar[rrr]|-{\partial ^1}\ar@<-1.7ex>[rrr]_{\partial ^2} &&& \mathsf{3} }$$
into $\Delta $, and $\j _{{}_{3}}$ is the inclusion of  $\Delta _ {{}_{3}}$ into the $2$-category
$$\xymatrix{  \mathsf{0} \ar[rr]^-d && \mathsf{1}\ar@<1.7 ex>[rrr]^-{d^0}\ar@<-1.7ex>[rrr]_-{d^1} &&& \mathsf{2}\ar[lll]|-{s^0}
\ar@<1.7 ex>[rrr]^{\partial ^0}\ar[rrr]|-{\partial ^1}\ar@<-1.7ex>[rrr]_{\partial ^2} &&& \mathsf{3}}$$
which is the $2$-category obtained from the addition of an initial object to $\Delta _ {{}_{3}}$.
This proves in particular that the definition of descent category via biased descent data on objects, which corresponds to $\Ps\Ran _ {\j _{{}_{3}} }\AAA \t _ {{}_3}$,  is equivalent to the definition of the descent category via unbiased descent data on objects, which corresponds to the case $\Ps\Ran _ \j \AAA (\mathsf{0})$. But the main points of this result are (1) this gives a very simple universal property of the descent category/object and (2) this gives a way of easily fitting 
the descent object in our language.

After this detailed work on descent objects, we turn to elementary and known examples. 
The Eilenberg-Moore objects and the monadicity of functors also
fit easily in our context of weighted bilimits/pseudoalgebra structures, once we follow the ideas of \cite{MR845410}. This is explained in Section 6 of Chapter 3.

Finally, in Section 7 of Chapter 3, we show how our perspective on the Beck-Chevalley condition (as explained in Section \ref{Doctrinal Adjunction PhD}) 
allows us to get results on 
pseudoalgebra structures/commmutativity of bilimits and monadicity. This leads to our first result of the type of B\'{e}nabou-Roubaud Theorem. After that, 
we finally show how our results work in the context of classical descent theory. We recast and generalize classical results as direct consequences of our
previous work.

We then give refinements of our results on commutativity of bilimits. In the context of descent theory, this allows us to give better results
on effective descent morphisms of weighted bilimits of $2$-categories. It also gives the Galois result of \cite{MR1245796} as a direct consequence.

One particular result obtained from our setting of commutativity of bilimits  is the pseudopullback theorem. It gives conditions to
get effective descent morphisms (w.r.t. basic fibration) of well behaved pseudopullbacks of categories.
We finish this chapter applying this result to 
detect effective descent morphisms
in categories of enriched categories. Firstly, we prove that, for suitable cartesian categories $V$, we have an embedding $V\textrm{-}\Cat\to \Cat (V) $
that is actually induced by a pseudopullback of categories. Then, using the pseudopullback theorem, we prove that such embedding 
reflects effective descent morphisms.

\section{Biadjoint Triangles and Lifting of Biadjoints}\label{Capitulo4 PhD}
Chapter 4 corresponds to the 
article \textit{On Biadjoint Triangles}~\cite{MR3491845},  published in \textit{Theory and Applications of Categories, Vol 31, N. 9} (2016).
The main contributions are the biadjoint triangle theorems, which  can be seen as $2$-dimensional analogues of the adjoint triangle theorem of \cite{MR0233864}.
As mentioned in Section \ref{Overview PhD},  in order to prove the main results, we use the fact that the category of pseudomorphisms between two pseudoalgebras has the universal property of the descent object.  
More precisely, assuming that
\[\xymatrix{  \aaa\ar[rr]^-{J}\ar[dr]_-{R}&&\bbb\ar[dl]^-{U}\\
&\ccc\ar@{}[u]|-\simeq & }\] 
is a pseudonatural equivalence, in which $R$, $J$, $U $ are pseudofunctors, $U$ is pseudopremonadic and $R$ has a right biadjoint. We prove that $J$ has a left biadjoint $G$, provided that $\aaa$ has some needed codescent objects. We also study the unit and the counit of the obtained biadjunction: we give sufficient 
conditions under which the unit and counit  are pseudonatural equivalences. Finally, we show that, under suitable conditions, it is possible to construct a (strict) left 2-adjoint.

Similarly to the case of adjoint triangles in $1$-dimensional category theory, the biadjoint triangles 
have many 
applications in 
$2$-dimensional category theory. 
Examples of which are
given in this same paper: 
\begin{itemize} \renewcommand\labelitemi{--}
 \item \textit{Pseudomonadicity characterization}: without avoiding pseudofunctors,
 using the biadjoint triangle theorem and the results on counit and unit, 
 we give a explicit proof of the pseudomonadicity 
 characterization 
 due to  
 Le Creurer, Marmolejo and Vitale in \cite{MR1916482};
 \item \textit{$2$-monadic approach to coherence}: as immediate consequences of our main theorems,
 we recast and improve results on the $2$-monadic approach 
 to coherence developed in \cite{MR1916482, MR985657, MR1007911, MR1935980}. 
 More precisely, we characterize when there is a left $2$-adjoint 
 (and biadjoint)
 to the inclusion of the $2$-category of strict algebras into the $2$-category of pseudoalgebras, 
$$\TTTTT\textrm{-}\Alg _ {\textrm{s}}\to
\mathsf{Ps}\textrm{-}\TTTTT\textrm{-}\Alg $$ of 
 a given $2$-monad $\TTTTT$. We also characterize when the unit of such biadjunction is a pseudonatural equivalence;
 \item \textit{Lifting of biadjoints}: the biadjoint triangle theorem gives biadjoints
 to algebraic pseudofunctors, that is to say, lifting of biadjoints. These results
 recover and generalize, for instance, results of 
 Blackwell, Kelly and Power~\cite{MR1007911}. 
 \item \textit{Pointwise pseudo-Kan extension}: We originally introduce the notion of pseudo-Kan extension
 and,
 using the results on lifting of biadjoints, in the presence of weighted bilimits, we
 construct pseudo-Kan extensions with them. 
This result, hence, gives the notion of pointwise 
 pseudo-Kan extension. It also gives a way of recovering the construction of weighted bilimits via
 descent objects, cotensor (bi)products and (bi)products given originally in 
\cite{MR574662}, if we assume the construction of the pointwise pseudo-Kan extension via Fubini's Theorem for weighted bilimits given in Chapter 3.

Similarly to the pointwise Kan extension in $1$-dimensional category theory, 
the concept of pointwise pseudo-Kan extension plays a relevant  role in $2$-dimensional category theory.
As mentioned in Section \ref{Capitulo3 PhD}, one instance of application of this concept given in this thesis is within the
 study of commutativity of weighted bilimits of Chapter 3. Other examples are
within the study of $2$-dimensional flat pseudofunctors of \cite{2016arXiv161009429D} and within the study of formal aspects of $2$-dimensional category theory via $\mathsf{Gray}$-categories~\cite{2017arXiv170704074D}.
\end{itemize}

Chapter 5 corresponds to the 
article \textit{On lifting of biadjoints and lax algebras}~\cite{2016arXiv160703087L}, to appear in  
\textit{Categories and General Algebraic Structures with Applications}. 
It gives further theorems on lifting of biadjoints 
provided that we can describe the categories
of morphisms of a certain domain  $2$-categories in terms of weighted (bi)limits.
This gives an abstract account of the main idea of some proofs of Chapter 4. Still,
we show that this setting allows us to get results outside of the context of Chapter 4.

In particular, this approach, together with 
results on lax descent objects and lax algebras, allows us to give results on lifting of biadjoints
involving (full) sub-$2$-categories of the $2$-category of lax algebras. This gives biadjoint triangle theorems
involving the $2$-category of lax algebras.
As a immediate consequence, we complete our
treatment of the $2$-monadic approach to coherence via biadjoint triangle theorems.  

\begin{rem}\label{Faltando counidade}
Unlike in the case of Chapter 5, our results on biadjoint triangles involving the $2$-category of lax algebras, Theorems 5.5.2 and .5.5.3~\cite{2016arXiv160703087L}, lack
 the study of the counit and the unit of the obtained 
biadjunctions. The study of counit and the unit in this setting could
lead to new applications. One example is given in Remark \ref{Eu nao pretendo fazer mas deixo como trabalho futuro}.
\end{rem}

\section{Lifting of Biadjoints and Formal Theory of Monads}\label{Teoria Formal das Monadas e levantamentos PhD}

In this section, we talk about applications of the results of Chapter 5 in the 
context of the formal theory of monads. In order to do so, we assume most of the prerequisites of that chapter, including 
the concept of weighted limits and colimits in a $2$-category w.r.t. the $\Cat $-enrichment, usually called $2$-limits and $2$-colimits. In this direction, we adopt the terminology and definitions
of Section 1 of Chapter 2~\cite{2017arXiv170404474L}.

Every adjunction induces a monad. This was originally shown in \cite{MR0150184} for the $2$-category $\Cat$. However it works 
for any $2$-category. Indeed, given an adjunction $(f\dashv g , \epsilon , \eta ): Y\to Z $ in a $2$-category $\bbb $,
$(Y, gf , \id _ {{}_{g}}\ast \epsilon\ast \id _ {{}_{f}} , \eta ) $ is a monad on $Y$, that is to say, a lax $\Id _ {{}_{\bbb }} $-algebra structure on $Y$.

\begin{rem}
Given a monad $\mathtt{y}= (Y, \alg _ {{}_{\mathtt{y}}}, \overline{{\underline{\mathtt{y}}}}, 
\overline{{\underline{\mathtt{y}}}}_0 )$ in a $2$-category $\bbb $, we define the category of $\mathtt{y}$-adjunctions $\mathtt{y}\textrm{-}\mathsf{adj} (\bbb ) $ as follows:
\begin{itemize}\renewcommand\labelitemi{--}
	\item The objects of $\mathtt{y}\textrm{-}\mathsf{adj}(\bbb ) $  are adjunctions $(f\dashv g , \epsilon , \eta ): Y\to Z $ that induce $\mathtt{y}$;
	\item A morphism of $\mathtt{y}\textrm{-}\mathsf{adj}(\bbb ) $ between two adjunctions $(f\dashv g , \epsilon , \eta ): Y\to Z $ and $(\widetilde{f}\dashv \widetilde{g} , \widetilde{\epsilon } , \eta ): 
	Y\to \widetilde{Z} $ is a morphism $j : Z\to \widetilde{Z} $ such that  $jf = \widetilde{f}  $ and $g = \widetilde{g} j $.
\end{itemize}
If there exists, the terminal object of $\mathtt{y}\textrm{-}\mathsf{adj}(\bbb ) $ is called an \textit{Eilenberg-Moore adjunction} for the monad $\mathtt{y} $.
In this case, the domain of the
right adjoint of such adjunction is called the \textit{Eilenberg-Moore object of  $\mathtt{y}$} and denoted by  $Y^\mathtt{y}$. 
Dually, the initial object (if it exists) is called the Kleisli adjunction for the monad $\mathtt{y} $. In this case, 
the domain of the right adjoint of such adjunction is called the \textit{Kleisli object of  $\mathtt{y}$} and denoted by  $Y_\mathtt{y}$. 
\end{rem}

There is an Eilenberg-Moore adjunction and a Kleisli adjunction for each monad in $\Cat $.
These results were shown respectively in \cite{MR0184984} and \cite{MR0177024}. 
However, it is easy to construct counterexamples of $2$-categories not having all the Eilenberg-Moore (or Kleisli) adjunctions.
In order to give  a non-artificial 
easy to check example, we consider bicategories. More precisely, we can consider
the suspension of the monoidal cartesian category $(\Set , \times , 1 ) $, that is to say, we see such a monoidal category as a bicategory with only one object $\triangle $
as in Remark \ref{Suspension Categorias Monoidais PhD Capitulo 1}. A monad
in such bicategory
is the same as a (classical) monoid. There are plenty nontrivial monoids, while the suspension of $(\Set , \times , 1 ) $ has only the trivial adjunction.

\begin{rem}
Clearly there is a bijection between monads  in $\bbb $ and monads
in $\bbb ^\op $. More precisely, the contraviariant $2$-functor $\bbb\to\bbb ^\op $ takes monads in $\bbb $ to monads in $\bbb ^\op $.
So, by abuse of language, if $\mathtt{y} $ is a monad in $\bbb $, we denote by $\mathtt{y} $ the corresponding monad in $\bbb ^\op $.

We can, then, give precise meaning to the fact that the notion of Eilenberg-Moore objects is dual to the notion of Kleisli objects.
Indeed, the Kleisli object for a monad $\mathtt{y} $ in a $2$-category
$\bbb $ is, if it exists,  the Eilenberg-Moore object of $\mathtt{y} $ in $\bbb ^\op $. 

\end{rem}

In  \cite{MR0347936}, it is observed that the Eilenberg-Moore object has a concise universal property. 
Namely, given a $2$-category $\bbb $,
there is an inclusion $2$-functor $\bbb\to \mathsf{Mnd} (\bbb )    $  
which takes each object $Z$ to the monad $(Z, \id _ {{}_{Z}}, \id _ {{}_{\id _ {{}_{Z}} }}, 
\id _ {{}_{\id _ {{}_{Z}} }} )$. The Eilenberg-Moore object of a 
monad  $\mathtt{y}= (Y, \alg _ {{}_{\mathtt{y}}}, \overline{{\underline{\mathtt{y}}}}, 
\overline{{\underline{\mathtt{y}}}}_0 )$ is given by the right $2$-reflection of $\mathtt{y} $ along $\bbb\to \mathsf{Mnd} (\bbb )$, if it exists.
In particular,  a morphism $f: X\to Y ^\mathtt{y} $ corresponds to a pair  $\mathtt{f} = (\widetilde{f}, \left\langle \overline{\mathsf{f}}\right\rangle ) $
in which $ \widetilde{f}: X\to Y $ is a morphism and $\left\langle   \overline{\mathsf{f}}\right\rangle : \alg _ {{}_{\mathtt{y}}}\widetilde{f} \Rightarrow \widetilde{f}   $ 
is a $2$-cell, such that the equations
\begin{equation*}
\vcenter{\xymatrix@R=3em{
Y
\ar[rr]|-{\alg _ {{}_{\mathtt{y}}} }
&&
Y
\\
\ar@{}[ru]|-{\xRightarrow{\overline{{\underline{\mathtt{y}}}} }}&&\ar@{}[ld]|-{\xRightarrow{\left\langle   \overline{\mathsf{f}}\right\rangle }}
\\
Y
\ar[rruu]|-{\alg _ {{}_{\mathtt{y}}}}
\ar[uu]|-{\alg _ {{}_{\mathtt{y}}}}
&&
X
\ar[ll]|-{\widetilde{f}}
\ar[uu]|-{\widetilde{f}}
}} 
\, =\,    
\vcenter{\xymatrix@=3em{
Y
\ar[rr]|-{\alg _ {{}_{\mathtt{y}}}}
&&
Y
\\
\ar@{}[rd]|-{\xRightarrow{\left\langle   \overline{\mathsf{f}}\right\rangle }}&&\ar@{}[lu]|-{\xRightarrow{\left\langle   \overline{\mathsf{f}}\right\rangle }}
\\
Y
\ar[uu]|-{\alg _ {{}_{\mathtt{y}}}}
&&
X
\ar[uu]|-{\widetilde{f}}
\ar[lluu]|-{\widetilde{f} }
\ar[ll]|-{\widetilde{f}}
}}
\quad\mbox{and}\quad
\vcenter{\xymatrix@R=3em{
Y
\ar@{=}[rr]
&&
Y
\\
\ar@{}[ru]|-{\xRightarrow{\overline{{\underline{\mathtt{y}}}}_0 }}&&\ar@{}[ld]|-{\xRightarrow{\left\langle   \overline{\mathsf{f}}\right\rangle }}
\\
Y
\ar[rruu]|-{\alg _ {{}_{\mathtt{y}}}}
\ar@{=}[uu]
&&
X
\ar[ll]|-{\widetilde{f}}
\ar[uu]|-{\widetilde{f}}
}} 
\,
=
\,
\vcenter{
\xymatrix@R=3em{
Y
\ar@{}[dd]|-{\xRightarrow{\id _ {{}_{\widetilde{f} }}} }
\ar@/^3ex/[dd]|-{\widetilde{f} }
\ar@/_3ex/[dd]|-{\widetilde{f} }
\\
\\
X
}
}
\end{equation*}
hold.
Since the Kleisli object of  $\mathtt{y} $
is the 
 Eilenberg-Moore object of $\mathtt{y} $ in $\bbb ^\op $, we have that the Kleisli object is given by the right $2$-reflection
 of $\mathtt{y} $  along  $\bbb ^\op\to \mathsf{Mnd} (\bbb ^\op )    $ 
which is the same as the left $2$-reflection of $\bbb \to \left(\mathsf{Mnd} (\bbb ^\op ) \right) ^\op\cong \mathsf{Mnd} _ \co (\bbb ) $.

Moreover, \cite{MR0347936} generalizes the Eilenberg-Moore and the Kleisli constructions. More precisely, 
if $X$ is a category, \cite{MR0347936} constructs the right $2$-adjoint to 
the inclusion
$\left[ X, \Cat\right]\to \left[ X, \Cat\right] _ {Lax} $ of the $2$-category of lax functors $X\to\Cat $, lax natural transformations and modifications into
the $2$-category of $2$-functors, $2$-natural transformations and modifications.
In \cite{MR0347936}, Street also constructs the left $2$-adjoint 
to the inclusion  $\left[ X, \Cat\right]\to \left[ X, \Cat\right] _ {{Lax}_\mathsf{c}} $,
in which
$\left[ X, \Cat\right] _ {{Lax}_\mathsf{c}} $ denotes the $2$-category of lax functors, colax natural transformations and modifications.

In order to verify that these $2$-adjoints actually are generalizations of the Eilenberg-Moore and Kleisli objects, we should observe that,
considering the inclusions $\Cat\to \mathsf{Mnd} _ \co (\Cat ) $ and $\Cat\to \mathsf{Mnd} (\Cat )    $, 
  we actually have isomorphisms $\mathsf{Mnd} (\Cat )\cong \left[ \mathsf{1}, \Cat\right] _ {Lax} $ and $\mathsf{Mnd}_\co (\Cat )\cong \left[ \mathsf{1}, \Cat\right]_ {{Lax}_\mathsf{c}} $   such that the diagrams 
\begin{equation*}
\vcenter{\xymatrix@R=3.5em{
\Cat\ar[r]|-{\cong }\ar[d]
&\left[ \mathsf{1}, \Cat\right] \ar[d]
\\ 
\mathsf{Mnd} (\Cat )
\ar[r]|-{\cong } 
&\left[ \mathsf{1}, \Cat\right] _ {Lax}
}}
\quad
\vcenter{\xymatrix@R=3.5em{
\Cat\ar[r]|-{\cong }\ar[d]
&\left[ \mathsf{1}, \Cat\right]
\ar[d]
\\ 
\mathsf{Mnd} _ \co (\Cat )
\ar[r]|-{\cong } 
&\left[ \mathsf{1}, \Cat\right] _ {{Lax}_\mathsf{c}}
}}
\end{equation*}
commute. More generally, in our context, this is given by the fact that, given a $2$-category $\bbb $, the diagrams
\begin{equation*}
\vcenter{
\xymatrix@=3.5em{
\mathsf{Lax}\textrm{-}\Id _ {{}_{\bbb }} \textrm{-}\CoAlg _{\mathsf{c}\ell }
\ar[dr]
&
\Id _ {{}_{\bbb }}\textrm{-}\CoAlg _ {\textrm{s}}\cong\Id _ {{}_{\bbb }}\textrm{-}\Alg _ {\textrm{s}}
\ar[d]
\ar[l]
\ar[r]
&
\mathsf{Lax}\textrm{-}\Id _ {{}_{\bbb }} \textrm{-}\Alg _{\mathsf{c}\ell }
\ar[dl]
\\
&\bbb &
}}
\end{equation*}
\begin{equation*}
\vcenter{\xymatrix@=3.5em{
\left[ \mathsf{1}, \bbb\right] _ {Lax}
\ar[dr]
&
\left[ \mathsf{1}, \bbb\right]
\ar[d]
\ar[l]
\ar[r]
&
\left[ \mathsf{1}, \bbb\right] _ {{Lax}_\mathsf{c}}
\ar[dl]
\\
&\bbb &
}}
\end{equation*}
are naturally isomorphic, in which the horizontal arrows are the obvious inclusions while the non-horizontal arrows are the forgetful $2$-functors. In particular, the inclusion $\left[ \mathsf{1}, \bbb\right]\to\left[ \mathsf{1}, \bbb\right] _ {Lax} $   is actually the inclusion 
$\Id _ {{}_{\bbb }}\textrm{-}\CoAlg _ {\textrm{s}}\cong\Id _ {{}_{\bbb }}\textrm{-}\Alg _ {\textrm{s}}\to \mathsf{Lax}\textrm{-}\Id _ {{}_{\bbb }} \textrm{-}\CoAlg _{\mathsf{c}\ell }$.

More generally, if $\aaa $ is any $2$-category, denoting by $\aaa _ 0  $ the discrete $2$-category of objects, 
the inclusion $\aaa _ 0\to \aaa $ induces a restriction $2$-functor $\left[ \aaa , \bbb\right] \to \left[ \aaa _ 0 , \bbb \right] $. 
If $\bbb $ has suitable weighted limits and colimits and $\aaa $ is small, this restriction has right and left $2$-adjoints given by the (global) pointwise right and left Kan extensions.
Assuming that
this restriction has right and left $2$-adjoints, we have a $2$-monad $\overline{\Lan } $ and a $2$-comonad $\overline{\Ran } $ on the $2$-category
 $\left[ \aaa _ 0 , \bbb \right]$. In this case, the diagrams
\begin{equation*}
\vcenter{
\xymatrix@=4em{
\mathsf{Lax}\textrm{-}\overline{\Ran }  \textrm{-}\CoAlg _{\mathsf{c}\ell }
\ar[dr]
&
\overline{\Ran }\textrm{-}\CoAlg _ {\textrm{s}}\cong\overline{\Lan }\textrm{-}\Alg _ {\textrm{s}}
\ar[d]
\ar[l]
\ar[r]
&
\mathsf{Lax}\textrm{-}\overline{\Lan }  \textrm{-}\Alg _{\mathsf{c}\ell }
\ar[dl]
\\
&\left[ \aaa _ 0 , \bbb \right] &
}}
\end{equation*}
\begin{equation*}
\vcenter{\xymatrix@=4em{
\left[ \aaa , \bbb\right] _ {Lax}
\ar[dr]
&
\left[ \aaa , \bbb\right]
\ar[d]
\ar[l]
\ar[r]
&
\left[ \aaa , \bbb\right] _ {{Lax}_\mathsf{c}}
\ar[dl]
\\
&\left[ \aaa _ 0 , \bbb \right] &
}}
\end{equation*}
are naturally isomorphic. These observations immediately show how the results of Chapter 5  generalize the construction: they actually characterize when it is possible to 
get such constructions. More precisely, using the techniques of that chapter, we are able to study the existence of the 
right $2$-adjoint to $\SSSSS\textrm{-}\CoAlg _ {\textrm{s}}\to\mathsf{Lax}\textrm{-}\SSSSS \textrm{-}\CoAlg _{\mathsf{c}\ell }  $ 
for any given $2$-comonad $\SSSSS $, or, equivalently, a left $2$-adjoint to 
$\TTTTT\textrm{-}\Alg _ {\textrm{s}}\to \mathsf{Lax}\textrm{-}\TTTTT  \textrm{-}\Alg _{\mathsf{c}\ell }$ for any given $2$-monad $\TTTTT $. So it is clear that
this generalizes the constructions of \cite{MR0347936}. Since we do not explicitly deal with colax morphisms in Chapter 5, we briefly
describe below how we get the results for our context. We omit most of the proofs, since some of them are slight variations of the proofs on lax morphisms of Chapter 5, while the rest of the proofs follow directly from results of that chapter.

\begin{defi}[$\t ^\mathsf{c} : \Delta _\ell ^\mathsf{c}\to \dot{\Delta } _ \ell ^\mathsf{c}$]\label{deltacolaxmorphisms PhD}
We denote by $\dot{\Delta  }_\ell^\mathsf{c} $ the $2$-category generated by the diagram
$$\xymatrix{  \mathsf{0} \ar[rr]^-d && \mathsf{1}\ar@<1.7 ex>[rrr]^-{d^0 }\ar@<-1.7ex>[rrr]_-{d^1 } &&& \mathsf{2}\ar[lll]|-{s^0}
\ar@<1.7 ex>[rrr]^{\partial ^0 }\ar[rrr]|-{\partial ^1}\ar@<-1.7ex>[rrr]_{\partial ^2} &&& \mathsf{3} }$$ 
with the $2$-cells:
\begin{equation*}
\begin{aligned}
\sigma_{00} &:&  \partial^{0}d^{0}\Rightarrow \partial ^1 d ^0, \\
\sigma_{01} &:&  \partial ^0 d ^1\Rightarrow\partial^{2}d^{0}, \\
\sigma_{21} &:&  \partial ^2 d ^1\Rightarrow\partial^{1}d^{1},
\end{aligned}
\qquad
\begin{aligned}
 n_0        &:&  \id _{{}_\mathsf{1}}\Rightarrow s^0d^0 ,  \\
 n_1        &:&   \id _{{}_\mathsf{1}}\Rightarrow s^{0}d^{1}, \\
 \vartheta        &:&  d^0d\Rightarrow d^1d,
\end{aligned}
\end{equation*}
satisfying
\begin{itemize}
\item Associativity:
\begin{equation*}
\vcenter{
\xymatrix@C=3.2em{  
\mathsf{0}
\ar[r]^-{d}
\ar[dd]_{d}\ar@{}[rdd]|-{\xRightarrow {\vartheta } } 
&
\mathsf{1}\ar[dd]^-{d^1}\ar@{}[rrddd]|{=}
&
&
\mathsf{3}
&
&
\mathsf{2}\ar[ll]_-{\partial ^1}
\ar@{}[ld]|-{\xRightarrow{\sigma_{21} }}
\\
&
&
&
&
\mathsf{2}
\ar@{}[l]|-{\xRightarrow{\sigma_{01} } }
\ar[lu]|-{\partial ^2}
&
\mathsf{1}\ar[l]^-{d^1}
\ar[u]_-{d^0}
\\
\mathsf{1} 
\ar[r]^-{ d^0 }
\ar[d]_{d^0 }\ar@{}[rd]|-{\xRightarrow {\sigma_{00}  } }
&
\mathsf{2}\ar[d]^-{\partial ^1} 
&
&
\mathsf{2}\ar[uu]^{\partial ^0}\ar@{}[rd]|-{\xRightarrow {\vartheta }}
&
\mathsf{1}\ar[l]_-{d^1 }\ar@{}[r]|-{\xRightarrow{\,\,\vartheta\,\, } }\ar[u]_{d^0}
&
\\
 \mathsf{2}\ar[r]_{\partial ^0 }
&
\mathsf{3}
&
&
\mathsf{1}\ar[u]^-{d^0}
&
\mathsf{0}\ar[l]^-{d}\ar[u]_-{d}\ar@/_5ex/[ruu]|-{d}
&
}}
\end{equation*}
\item Identity:
\begin{equation*}
\vcenter{\xymatrix@=3.2em{
\mathsf{0}\ar[r]^{d }\ar[d]_{d}\ar@{}[dr]|-{\xRightarrow{\vartheta  } }
&
\mathsf{1}\ar[d]^{d^1}\ar@{}[ddrr]|-{=}
&
&
\mathsf{0}\ar[d]_{d } 
&
\\
\mathsf{1}\ar[r]_{d^0 }\ar@{ { } }@/_1ex/[rd]|-{\xRightarrow{n_0} }\ar@{=}@/_5ex/[rd]
&
 \mathsf{2} \ar[d]^{ s^0}
&
&
\mathsf{1}\ar[r]_{d^1  }\ar@{ { } }@/_1ex/[rd]|-{\xRightarrow{n_1} }\ar@{=}@/_5ex/[rd]
&
\mathsf{2}\ar[d]^{ s^0}
\\
&
\mathsf{1}
&
&
&
\mathsf{1}
}}
\end{equation*}
\end{itemize}
The $2$-category $\Delta _\ell ^\mathsf{c}$ is, herein, the full sub-$2$-category of $\dot{\Delta } _ \ell ^\mathsf{c}$ with objects $\mathsf{1} $, $\mathsf{2} $ 
and $\mathsf{3} $. We denote the inclusion by $\t ^\mathsf{c} : \Delta _\ell ^\mathsf{c}\to \dot{\Delta } _ \ell ^\mathsf{c}$.
\end{defi}

\begin{rem}[Colax descent object and category]
If $\AAAA : \Delta _\ell ^\mathsf{c}\to \bbb $ and $\BBBB : \left( \Delta _\ell ^\mathsf{c}\right) ^\op \to \bbb $ are $2$-functors, if it exists, the weighted limit  $\left\{ \dot{\Delta } _\ell ^\mathsf{c}(\mathsf{0}, \t ^\mathsf{c} - ) , \AAAA\right\} $ is called the \textit{strict colax descent object} of $\AAAA $, while the
weighted colimit $\dot{\Delta } _\ell ^\mathsf{c}(\mathsf{0}, \t ^\mathsf{c} - ) \Asterisk \BBBB $ is called the \textit{strict colax codescent object} of $\BBBB $ (if it exists).

In the case of strict colax descent categories, we have a result similar to that described by Remark 5.3.4~\cite{2016arXiv160703087L}. More precisely, if
$\DDDD : \Delta _ {\ell }\to\Cat $
is a $2$-functor, then
$$\left\{ \dot{\Delta }_\ell ^\mathsf{c} (\mathsf{0}, \t ^\mathsf{c} -), \DDDD \right\} \cong \left[ \Delta _\ell ^\mathsf{c} , \Cat\right] \left( \dot{\Delta } _\ell ^\mathsf{c} (\mathsf{0}, \t ^\mathsf{c} -), \DDDD \right) .$$
Thereby, we can describe the strict colax descent object of $\DDDD :\Delta _\ell ^\mathsf{c}\to\Cat $ explicitly as follows:
\begin{enumerate}
\item Objects are $2$-natural transformations $\mathtt{f}: \dot{\Delta } _ \ell ^\mathsf{c} (\mathsf{0}, \t ^\mathsf{c} -)\longrightarrow \DDDD $. We have a bijective correspondence between such $2$-natural transformations and pairs $(f,\left\langle  \overline{\mathsf{f}}\right\rangle)$ in which $f$ is an object of $ \DDDD\mathsf{1} $ and $\left\langle  \overline{\mathsf{f}}\right\rangle: \DDDD (d^0)f\to \DDDD (d^1)f $ is a morphism in $ \DDDD\mathsf{2} $ satisfying the following equations:	
\begin{itemize}
\renewcommand\labelitemi{--}
\item Associativity:
\small
$$\left(\DDDD (\sigma _ {{}_{01}} )_ {{}_f}\right)\left(\DDDD (\partial ^0)(\left\langle  \overline{\mathsf{f}}\right\rangle)\right) \left( \DDDD (\sigma _ {{}_{21}}) _ {{}_{f}}\right)\left(\DDDD (\partial ^2)(\left\langle  \overline{\mathsf{f}}\right\rangle )\right) = \left(\DDDD(\partial ^1)(\left\langle  \overline{\mathsf{f}}\right\rangle)\right) \left(\DDDD (\sigma _ {{}_{00}}) _ {{}_{f}}\right)  $$
\normalsize
\item Identity:
\small
$$\left(\DDDD(s^0) (\left\langle  \overline{\mathsf{f}}\right\rangle) \right)\left(\DDDD(n_0) _ {{}_f}\right) = \left(\DDDD(n_1) _ {{}_f}\right) $$
\normalsize
\end{itemize}
If $\mathtt{f}: \dot{\Delta }(\mathsf{0}, -)\longrightarrow \DDDD $ is a $2$-natural transformation, we get such pair by the correspondence 
$\mathtt{f}\mapsto (\mathtt{f} _ {{}_{\mathsf{1} }}(d), \mathtt{f} _ {{}_{\mathsf{2} }}(\vartheta )) $.

\item The morphisms are modifications. In other words, a morphism $\mathfrak{m} : \mathtt{f}\to\mathtt{h} $ is determined by a morphism $\mathfrak{m}: f\to g $ in $\DDDD \mathsf{1} $ such that $\DDDD (d^1)(\mathfrak{m} )\left\langle   \overline{\mathsf{f}}\right\rangle =  \left\langle \overline{\mathsf{h}}\right\rangle\DDDD (d^0)(\mathfrak{m} ) $.
\end{enumerate}

\end{rem}
Similarly to Proposition 4.5.5~\cite{MR3491845} and 5.4.5~\cite{2016arXiv160703087L}, we clearly have: 

\begin{prop}\label{proposicao igual PhD}
Let $\TTTTT = (\TTTTT,  m  , \eta , \mu,   \iota, \tau )$ be a pseudomonad on a $2$-category $\bbb $. Given lax $\TTTTT$-algebras 
$\mathtt{y}=(Y, \alg _ {{}_{\mathtt{y}}}, \overline{{\underline{\mathtt{y}}}}, \overline{{\underline{\mathtt{y}}}}_0 )$, $\mathtt{z}= (Z, \alg _ {{}_{\mathtt{z}}}, \overline{{\underline{\mathtt{z}}}}, \overline{{\underline{\mathtt{z}}}}_0 )$
the category $\mathsf{Lax}\textrm{-}\TTTTT\textrm{-}\Alg _{\mathsf{c}\ell } (\mathtt{y}, \mathtt{z}) $ is the strict colax descent object  of the diagram
$\mathbb{T}_{\mathtt{z}}^{\mathtt{y}}: \Delta _ \ell \to \Cat $
\begin{equation*}
\xymatrix@R=2em{  \bbb (U\mathtt{y}, U\mathtt{z})\ar@<-3ex>[rrr]_-{\bbb(\alg _ {{}_{\mathtt{y}}}, U\mathtt{z} )}\ar@<3ex>[rrr]^-{\bbb(\TTTTT U\mathtt{y}, \alg _ {{}_{\mathtt{z}}} )\circ \hspace{0.2em}\TTTTT _ {{}_{(U\mathtt{y},U\mathtt{z})}} } &&& \bbb (\TTTTT U\mathtt{y},  U\mathtt{z} )\ar[lll]|-{\bbb (\eta _ {{}_{U\mathtt{y}}} , U\mathtt{z})}
\ar@<-3ex>[rrr]_-{\bbb(\TTTTT (\alg _ {{}_{\mathtt{y}}}), U\mathtt{z} )}\ar[rrr]|-{\bbb (m _{{}_{U\mathtt{y}}}, U\mathtt{z} )}\ar@<3ex>[rrr]^-{\bbb(\TTTTT ^2 U\mathtt{y}, \alg _ {{}_{\mathtt{z}}} )\circ \hspace{0.2em}\TTTTT _ {{}_{(\TTTTT U\mathtt{y},U\mathtt{z})}} } &&& \bbb (\TTTTT ^2 U \mathtt{y}, U\mathtt{z} ) }
\end{equation*}
in which $U:\mathsf{Lax}\textrm{-}\TTTTT\textrm{-}\Alg _{\mathsf{c}\ell }\to \bbb $ denotes the forgetful $2$-functor and
\small
\begin{equation*}
\begin{aligned}
&\mathbb{T}_{\mathtt{z}}^{\mathtt{y}}(\sigma _ {01}) _{{}_{f}} &:=& \left( \id _ {{}_{  \alg _ {{}_{\mathtt{z}}} }}\ast \tttt _ {{}_{(f)(\alg _ {{}_{\mathtt{y}}}) }}^{-1}  \right)   \\
&\mathbb{T}_{\mathtt{z}}^{\mathtt{y}}(\sigma _ {21}) _{{}_{f}} &:=&  \left( \id_ {{}_{f}}\ast \overline{{\underline{\mathtt{y}}} }\right)\\
&\mathbb{T}_{\mathtt{z}}^{\mathtt{y}}(n _ {1}) _{{}_{f}} &:=& \left( \id _ {{}_{f}}\ast \overline{{\underline{\mathtt{y}}}}_0 \right)
\end{aligned}
\begin{aligned}
&&&\\
&\mathbb{T}_{\mathtt{z}}^{\mathtt{y}}(\sigma _ {00}) _{{}_{f}} &:=& \left( \id _ {{}_{ \alg _ {{}_{\mathtt{z}}}  }}\ast m _ {{}_{f}}^{-1}\right)\cdot\left( \overline{{\underline{\mathtt{z}}}}\ast\id _ {{}_{\TTTTT ^2(f)  }}\right)\cdot
\left(\id _ {{}_{\alg _ {{}_{\mathtt{z}}} }}\ast \tttt _ {{}_{(\alg _ {{}_{\mathtt{z}}}) (\TTTTT(f)) }}^{-1} \right) \\
&\mathbb{T}_{\mathtt{z}}^{\mathtt{y}}(n _ {0}) _{{}_{f}} &:=&  
\left(\id _{{}_{\alg _ {{}_{\mathtt{z}}}}}\ast\eta _ {{}_{f}}^{-1} \right)\cdot \left(\overline{{\underline{\mathtt{z}}}}_0\ast\id _ {{}_{f}}\right)
\end{aligned}
\end{equation*}
\normalsize
Furthermore, the strict descent object of $\mathbb{T}_{\mathtt{z}}^{\mathtt{y}}$ is $\mathsf{Lax}\textrm{-}\TTTTT\textrm{-}\Alg  (\mathtt{y}, \mathtt{z}) $.
\end{prop}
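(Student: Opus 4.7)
The plan is to follow closely the strategy used in the proofs of Propositions 4.5.5 and 5.4.5, adapting it to the colax setting by replacing the strict lax descent object with the strict colax descent object described in the remark immediately preceding the statement. In particular, I would begin by unpacking what an object of the strict colax descent object of $\mathbb{T}_{\mathtt{z}}^{\mathtt{y}}$ amounts to: a pair $(f, \langle \overline{\mathsf{f}}\rangle)$ where $f$ is an object of $\mathbb{T}_{\mathtt{z}}^{\mathtt{y}}(\mathsf{1}) = \bbb(U\mathtt{y}, U\mathtt{z})$, i.e.\ a morphism $f:Y\to Z$, and where $\langle \overline{\mathsf{f}}\rangle$ is a morphism in $\mathbb{T}_{\mathtt{z}}^{\mathtt{y}}(\mathsf{2}) = \bbb(\TTTTT U\mathtt{y}, U\mathtt{z})$ from $\mathbb{T}_{\mathtt{z}}^{\mathtt{y}}(d^0)(f) = \alg_{\mathtt{z}}\TTTTT(f)\eta$-composite to $\mathbb{T}_{\mathtt{z}}^{\mathtt{y}}(d^1)(f) = f\alg_{\mathtt{y}}$, i.e.\ a $2$-cell $\langle \overline{\mathsf{f}}\rangle : f\alg_{\mathtt{y}}\Rightarrow \alg_{\mathtt{z}}\TTTTT(f)$. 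This matches exactly the datum of a colax $\TTTTT$-morphism in Definition \ref{LAXALGEBRASCOLAX}.

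Next, I would substitute the explicit definitions of the $2$-cells $\mathbb{T}_{\mathtt{z}}^{\mathtt{y}}(\sigma_{01})$, $\mathbb{T}_{\mathtt{z}}^{\mathtt{y}}(\sigma_{00})$, $\mathbb{T}_{\mathtt{z}}^{\mathtt{y}}(\sigma_{21})$ into the associativity equation of the strict colax descent object and verify, by pasting, that the resulting equation is precisely the first axiom for a colax $\TTTTT$-morphism of Definition \ref{LAXALGEBRASCOLAX}, using the very definition $\widehat{\TTTTT(\langle\overline{\mathsf{f}}\rangle)} := \tttt^{-1}_{(\alg_{\mathtt{z}})(\TTTTT(f))}\,\TTTTT(\langle\overline{\mathsf{f}}\rangle)\,\tttt_{(f)(\alg_{\mathtt{y}})}$ to reorganise the structural constraints of $\TTTTT$ arising from $\mathbb{T}_{\mathtt{z}}^{\mathtt{y}}(\sigma_{01})$ and $\mathbb{T}_{\mathtt{z}}^{\mathtt{y}}(\sigma_{00})$ onto the correct sides. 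Analogously, the identity axiom of the strict colax descent object, expanded using $\mathbb{T}_{\mathtt{z}}^{\mathtt{y}}(n_0)$ and $\mathbb{T}_{\mathtt{z}}^{\mathtt{y}}(n_1)$, recovers the unit coherence axiom for a colax $\TTTTT$-morphism.

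The main obstacle will be the careful bookkeeping of pseudomonad coherence cells $\tttt,\,m,\,\eta$ appearing in the pasting diagrams: one must show that the pasting prescribed by the descent axioms, after substituting the formulas for $\mathbb{T}_{\mathtt{z}}^{\mathtt{y}}(\sigma_{ij})$ and $\mathbb{T}_{\mathtt{z}}^{\mathtt{y}}(n_i)$, simplifies via the pseudomonad axioms to the exact pasting appearing in Definition \ref{LAXALGEBRASCOLAX}. This is a routine but delicate diagram chase, mirroring the proofs of Propositions 4.5.5 and 5.4.5 but with the direction of $\langle \overline{\mathsf{f}}\rangle$ reversed; the reversal is harmless because the $\sigma_{ij}$'s and $n_i$'s in $\Delta_\ell^{\mathsf{c}}$ are themselves reversed with respect to $\Delta_\ell$.

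Finally, on morphisms, I would observe that a morphism in the strict colax descent object is a modification, which unpacks to a $2$-cell $\mathfrak{m}: f\Rightarrow h$ in $\bbb$ satisfying $\mathbb{T}_{\mathtt{z}}^{\mathtt{y}}(d^1)(\mathfrak{m})\cdot \langle \overline{\mathsf{f}}\rangle = \langle \overline{\mathsf{h}}\rangle \cdot \mathbb{T}_{\mathtt{z}}^{\mathtt{y}}(d^0)(\mathfrak{m})$; substituting the action of $\mathbb{T}_{\mathtt{z}}^{\mathtt{y}}$ on $d^0, d^1$, this is exactly the $\TTTTT$-transformation axiom between colax $\TTTTT$-morphisms, giving the claimed isomorphism of categories. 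The last sentence of the statement, identifying $\mathsf{Lax}\textrm{-}\TTTTT\textrm{-}\Alg(\mathtt{y}, \mathtt{z})$ with the strict descent object of $\mathbb{T}_{\mathtt{z}}^{\mathtt{y}}$, then follows by the same argument with the descent axioms interpreted up to the invertibility of $\langle \overline{\mathsf{f}}\rangle$, in line with Remark \ref{Importante Mas Negligenciado PhD}.
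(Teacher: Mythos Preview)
Your proposal is correct and matches the paper's own treatment: the paper does not write out a proof but simply states the result as holding ``similarly to Proposition 4.5.5 and 5.4.5'', and your plan is precisely to carry out that analogy by unpacking the explicit description of the strict colax descent category and matching it term-by-term with Definition~\ref{LAXALGEBRASCOLAX}. One small slip to watch: in your first paragraph you write that $\langle\overline{\mathsf f}\rangle$ goes \emph{from} $\mathbb{T}_{\mathtt z}^{\mathtt y}(d^0)(f)$ \emph{to} $\mathbb{T}_{\mathtt z}^{\mathtt y}(d^1)(f)$ and then conclude it is a $2$-cell $f\,\alg_{\mathtt y}\Rightarrow\alg_{\mathtt z}\,\TTTTT(f)$, which is the opposite direction; make sure you fix the orientation consistently when you actually do the bookkeeping you anticipate.
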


\begin{rem}\label{NOVO COLAX PhD}
Similarly to the case of Remark 5.4.6~\cite{2016arXiv160703087L}, we can actually define a pseudofunctor $\mathbb{T}^{\mathtt{y}}: \Delta _ \ell ^\mathsf{c} \times \mathsf{Lax}\textrm{-}\TTTTT\textrm{-}\Alg \to\Cat $ in which $\mathbb{T}^{\mathtt{y}}(-,{\mathtt{z}}) : = \mathbb{T}^{\mathtt{y}}_ {\mathtt{z}}$, since the morphisms defined above are actually pseudonatural in $\mathtt{z}$ w.r.t. $\TTTTT $-pseudomorphisms and $\TTTTT $-transformations. More importantly to our context, if $\TTTTT $ is a $2$-monad, the restriction of 
$\mathbb{T}^{\mathtt{y}} $
to $\Delta _ \ell ^\mathsf{c} \times \mathsf{Lax}\textrm{-}\TTTTT\textrm{-}\Alg _\textrm{s}$, in which $\mathsf{Lax}\textrm{-}\TTTTT\textrm{-}\Alg _\textrm{s} $
denotes the locally full sub-$2$-category of lax algebras and (strict) $\TTTTT $-morphisms and lax algebras, is actually a $2$-functor.
\end{rem}

By Remark \ref{NOVO COLAX PhD} and Proposition \ref{proposicao igual PhD}, as a consequence of the results of Chapter 5~\cite{2016arXiv160703087L}, we conclude as a particular case that:

\begin{theo}
Let $\TTTTT = (\TTTTT , m , \eta ) $ be a $2$-monad on $\bbb $ and $(E\dashv R, \epsilon ,\eta ):\bbb \to \TTTTT\textrm{-}\Alg _ \textrm{s} $ the Eilenberg-Moore $2$-adjunction induced by $\TTTTT $. The inclusion $J: \TTTTT\textrm{-}\Alg _ \textrm{s}\to \mathsf{Lax}\textrm{-}\TTTTT\textrm{-}\Alg  _{\mathsf{c}\ell } $
has a left $2$-adjoint if and only if $\TTTTT\textrm{-}\Alg _ \textrm{s}$  has the strict colax codescent object of
\small
\begin{equation*}\tag{$\BBBB_\mathtt{y}$}
\xymatrix{  E U\mathtt{y}\ar[rrrr]|-{E(\eta _{{}_{U\mathtt{y}}})} &&&&  E\TTTTT U\mathtt{y}\ar@<3ex>[llll]^-{E(\alg _ {{}_{\mathtt{y}}})}\ar@<-3ex>[llll]_-{\varepsilon _ {{}_{EU\mathtt{y}}}E( \alg_{{}_{JEU\mathtt{y}}}\TTTTT (\eta _ {{}_{U\mathtt{y} }} ))   } 
 &&&& \ar@<3ex>[llll]^-{E\TTTTT (\alg _{{}_{\mathtt{y}}})}\ar[llll]|-{E(m _ {{}_{U\mathtt{y}}} )}\ar@<-3ex>[llll]_-{ \varepsilon _ {{}_{E\TTTTT U\mathtt{y}}}E( \alg_{{}_{JE\TTTTT U\mathtt{y}}}\TTTTT (\eta _ {{}_{\TTTTT U\mathtt{y} }} )) } E\TTTTT ^2 U\mathtt{y} }
\end{equation*}
\normalsize
(with omitted $2$-cells) in which $U: \mathsf{Lax}\textrm{-}\TTTTT\textrm{-}\Alg  _{\mathsf{c}\ell }\to\bbb $ is the forgetful $2$-functor.
In this case, the left $2$-adjoint is given by $G\mathtt{y} =   \dot{\Delta }_\ell ^\mathsf{c} (\mathsf{0}, \t ^\mathsf{c} -)\Asterisk \BBBB_\mathtt{y}  $.
\end{theo}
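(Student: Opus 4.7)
The plan is to mirror the biadjoint triangle arguments of Chapter 5, adapted to colax morphisms. Having a left $2$-adjoint to $J$ at an object $\mathtt{y}$ of $\mathsf{Lax}\textrm{-}\TTTTT\textrm{-}\Alg_{\mathsf{c}\ell}$ is equivalent to the representability of the $2$-functor
\begin{equation*}
\mathsf{Lax}\textrm{-}\TTTTT\textrm{-}\Alg_{\mathsf{c}\ell}(\mathtt{y},J-)\,:\,\TTTTT\textrm{-}\Alg_\textrm{s}\longrightarrow\Cat .
\end{equation*}
By Proposition \ref{proposicao igual PhD} together with Remark \ref{NOVO COLAX PhD}, for each strict $\TTTTT$-algebra $\mathtt{x}$ this $2$-functor evaluates to the strict colax descent object of a diagram $\mathbb{T}^{\mathtt{y}}_{J\mathtt{x}}:\Delta_\ell^\mathsf{c}\to\Cat$ whose three terms are the hom-categories $\bbb(\TTTTT^k U\mathtt{y},R\mathtt{x})$ for $k=0,1,2$, and which is $2$-natural in $\mathtt{x}$.

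The central step is to turn $\mathbb{T}^{\mathtt{y}}_{J\mathtt{x}}$ into a representable diagram on $\TTTTT\textrm{-}\Alg_\textrm{s}$. I would use the Eilenberg--Moore bijections $\bbb(W,R\mathtt{x})\cong\TTTTT\textrm{-}\Alg_\textrm{s}(EW,\mathtt{x})$ at $W=\TTTTT^k U\mathtt{y}$ and then translate each generating arrow and $2$-cell of $\mathbb{T}^{\mathtt{y}}_{J\mathtt{x}}$ via the mate correspondence of Section \ref{Doctrinal Adjunction PhD}. The faces obtained by pre-composition in $\bbb$ with $\alg_{{}_{\mathtt{y}}}$, $\TTTTT(\alg_{{}_{\mathtt{y}}})$, $m_{{}_{U\mathtt{y}}}$ or $\eta_{{}_{U\mathtt{y}}}$ become pre-composition in $\TTTTT\textrm{-}\Alg_\textrm{s}$ with $E(\alg_{{}_{\mathtt{y}}})$, $E\TTTTT(\alg_{{}_{\mathtt{y}}})$, $E(m_{{}_{U\mathtt{y}}})$ and $E(\eta_{{}_{U\mathtt{y}}})$; the faces obtained by first applying $\TTTTT$ and then post-composing with $\alg_{{}_{J\mathtt{x}}}$ correspond under the bijection to pre-composition with $\varepsilon_{{}_{EU\mathtt{y}}}\,E(\alg_{{}_{JEU\mathtt{y}}}\TTTTT(\eta_{{}_{U\mathtt{y}}}))$ and $\varepsilon_{{}_{E\TTTTT U\mathtt{y}}}\,E(\alg_{{}_{JE\TTTTT U\mathtt{y}}}\TTTTT(\eta_{{}_{\TTTTT U\mathtt{y}}}))$, which are precisely the remaining arrows of $\BBBB_\mathtt{y}$. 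The coherence $2$-cells $\sigma_{ij}$, $n_i$ and $\vartheta$ are then produced from the triangle identities of $E\dashv R$ and the $2$-monad axioms. This yields a $2$-natural isomorphism $\mathbb{T}^{\mathtt{y}}_{J\mathtt{x}}\cong\TTTTT\textrm{-}\Alg_\textrm{s}(\BBBB_\mathtt{y}(-),\mathtt{x})$ of diagrams $\Delta_\ell^\mathsf{c}\to\Cat$, $2$-natural in $\mathtt{x}$.

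From this identification the theorem is immediate via the universal property of weighted $2$-colimits: the strict colax descent object of a representable diagram is represented by the corresponding strict colax codescent object, so
\begin{equation*}
\mathsf{Lax}\textrm{-}\TTTTT\textrm{-}\Alg_{\mathsf{c}\ell}(\mathtt{y},J\mathtt{x})\;\cong\;\TTTTT\textrm{-}\Alg_\textrm{s}\bigl(\dot{\Delta}_\ell^\mathsf{c}(\mathsf{0},\t^\mathsf{c}-)\Asterisk\BBBB_\mathtt{y},\,\mathtt{x}\bigr)
\end{equation*}
whenever the right-hand colimit exists in $\TTTTT\textrm{-}\Alg_\textrm{s}$, and the representability of the left-hand side conversely forces its existence. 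Hence $J$ admits a left $2$-adjoint $G$ precisely when each such strict colax codescent object exists in $\TTTTT\textrm{-}\Alg_\textrm{s}$, and in that case $G\mathtt{y}=\dot{\Delta}_\ell^\mathsf{c}(\mathsf{0},\t^\mathsf{c}-)\Asterisk\BBBB_\mathtt{y}$.

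The main obstacle will be the bookkeeping in the middle paragraph: verifying that under the EM bijections all six face maps, two degeneracies and six coherence $2$-cells of $\mathbb{T}^{\mathtt{y}}_{J\mathtt{x}}$ transport to the stated generating data of $\BBBB_\mathtt{y}$, using only the triangle identities, the naturality of $\varepsilon$ and $\eta$, and the $2$-monad axioms. Beyond this, the argument is a direct application of the general theory of representable weighted $2$-colimits.
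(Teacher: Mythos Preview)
Your proposal is correct and follows essentially the same approach as the paper. The paper does not give an independent proof: it simply states that the theorem follows ``as a consequence of the results of Chapter~5'' once Proposition~\ref{proposicao igual PhD} and Remark~\ref{NOVO COLAX PhD} are in hand, and your sketch is precisely an unpacking of how those lifting-of-biadjoints results apply in the colax-morphism setting --- representability, passage to a colax descent diagram of hom-categories, transport along the Eilenberg--Moore isomorphisms to a representable diagram $\BBBB_\mathtt{y}$, and the weighted-colimit conclusion.
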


As a corollary, if $\TTTTT $ is a $2$-monad on $\bbb $, the inclusion $\TTTTT\textrm{-}\Alg _ \textrm{s}\to \mathsf{Lax}\textrm{-}\TTTTT\textrm{-}\Alg  _{\mathsf{c}\ell } $ has a left $2$-adjoint
if $\bbb $ has and $\TTTTT $ preserves strict colax codescent objects. Dually, if $\SSSSS $ is a $2$-comonad on $\bbb $, $\SSSSS\textrm{-}\CoAlg _ \textrm{s}\to \mathsf{Lax}\textrm{-}\SSSSS\textrm{-}\CoAlg  _{\mathsf{c}\ell } $ has a right $2$-adjoint if $\bbb $ has and $\SSSSS $ preserves strict colax descent objects.

Since the $2$-monad
$\overline{\Lan } $ on $ \left[ \aaa _ 0 , \bbb \right] $ preserves strict colax codescent objects whenever  $\aaa $ is small and $\bbb $ has $2$-colimits, 
our result gives a left $2$-adjoint to the inclusion $\left[ \aaa  , \bbb \right]\to \left[ \aaa  , \bbb \right] _ {Lax _\mathsf{c} } $.
Dually, the $2$-comonad $\overline{\Ran } $ on 
$ \left[ \aaa _ 0 , \bbb \right] $ preserves strict colax descent objects whenever $\bbb $ has $2$-limits, hence our result 
gives the right $2$-adjoint of $\left[ \aaa  , \bbb \right]\to \left[ \aaa  , \bbb \right] _ {Lax }$.
These facts explain how our results generalize greatly the constructions of \cite{MR0347936}. In particular, unlike \cite{MR0347936},
our setting includes the case of
 lax actions of monoidal categories (called graded monads): or, more precisely, the case in which $\aaa $ is a bicategory with only one object
 (see Remark 5.4.3~\cite{2016arXiv160703087L} and \cite{MR3492435}).

Applying to the very special case of the classical theory of monads, we get that the Eilenberg-Moore category of a monad (seen as a lax coalgebra) $\mathtt{y}= (Y, \coalg _ {{}_{\mathtt{y}}}, \overline{{\underline{\mathtt{y}}}}, 
\overline{{\underline{\mathtt{y}}}}_0 )$  is the colax descent category of
\begin{equation*}\tag{$\DDDD $}
\xymatrix{  Y
\ar@<3ex>[rrrr]^-{\coalg _ {{}_{\mathtt{y}}}}\ar@<-3ex>[rrrr]_-{\id _ {{}_{Y}} } &&&&  Y\ar[llll]|-{\id _ {{}_{Y}} }
 \ar@<3ex>[rrrr]^-{\coalg _{{}_{\mathtt{y}}}}\ar[rrrr]|-{\id _ {{}_{Y}} }\ar@<-3ex>[rrrr]_-{ \id _{{}_{Y}} } 
 &&&& Y }
\end{equation*}
in which $\DDDD (\sigma _{00} ) = \overline{{\underline{\mathtt{y}}}} $, $\DDDD (n_0 ) = \overline{{\underline{\mathtt{y}}}}_0  $
and the images of $\sigma _ {01}$, $\sigma _{21} $ and $n_1 $ are the identities.

\begin{rem}\label{Eu nao pretendo fazer mas deixo como trabalho futuro}
The article \cite{MR0347936} also recasts the original universal properties w.r.t. adjunctions. That is to say, it generalizes the (universal
property) of the (generalized) Eilenberg-Moore and Kleisli 
adjunctions to its setting. In order to do so, it relies on the study of the counit and unit of the $2$-adjunctions $\left[ X , \bbb \right]\to \left[ X  , \bbb \right] _ {Lax _\mathsf{c} } $ and  $\left[ X , \bbb \right]\to \left[ X  , \bbb \right] _ {Lax }$ constructed therein. This fact shows that the study
of counit and unit of the obtained biadjunctions (and $2$-adjunctions) in the context of Chapter 5 could be interesting to recast the Eilenberg-Moore and
Kleisli adjunctions in our context, in order to generalize the setting of \cite{MR0347936}.
\end{rem}

\section{Pseudoexponentiability}
There is a vast literature on exponentiability of objects and morphisms within $1$-dimensional category theory~\cite{MR639571, MR1815044}. 
As mentioned in Section \ref{Suspension Categorias Monoidais PhD Capitulo 1}, we could consider exponentiability of objects
w.r.t. other monoidal structures in $V$, but we are particularly interested in the case of exponentiation w.r.t. the cartesian structure.

An object $A$ of a cartesian category $V$ is exponentiable if the functor
 $A\times - : V\to V $  is left adjoint. In this case,
the right adjoint is usually denoted by $\left[ A, -\right] $. A morphism $f: A\to B $ is exponentiable if it is an exponentiable object
of the comma category $V/B $ defined by: 
\begin{itemize}
\item The objects are morphisms with $B$ as codomain;
\item A morphism $f\to g $ is a morphism $h$ of $V$ between the domains of $f$ and $g$ such that
$$\xymatrix{
A\ar[rr]|-{h}
\ar[rd]|-{f}
&
&
A'
\ar[ld]|-{g}\\
&B&
} $$
commutes in $V$;
\item The composition and identities are given by the composition and identities of $V$.
\end{itemize}

The main problem is to characterize objects and morphisms that are 
exponentiable in a given category $V$ of interest. For instance, the characterization of 
the exponentiable morphisms (functors) of $\Cat $ is given in \cite{MR0190142, MR0310033}, 
while in \cite{MR2471250, MR2259813} the exponentiable morphisms (enriched functors)
 between $V$-enriched categories are characterized, for suitable monoidal categories $V$.   

As many concepts of $1$-dimensional category theory, exponentiability  is very strict to most of the cases within bicategory theory. Hence we should consider a weaker version: pseudoexponentiability.
Firstly, we consider bicategorical products instead of products. Secondly, we consider a biadjunction instead of an adjunction. That is to say:

\begin{defi}
An object $Y$ of a $2$-category $\bbb $ with bicategorical products is \textit{pseudoexponentiable} if the pseudofunctor $Y\times - : \bbb \to \bbb $ has a right biadjoint,
while $Y$ is exponentiable if $Y\times - $ is a $2$-functor and it has a right $2$-adjoint.
\end{defi}

As briefly mentioned in Section \ref{Capitulo4 PhD}, the adjoint triangle theorem of \cite{MR0233864} has many applications in $1$-dimensional category theory.
In particular, it is very useful within the study of exponentiable objects and morphisms. For instance, some of the results of the theory developed in 
\cite{MR920516}
can be seen as applications of the adjoint triangle theorem.

This fact suggests the possibility of applying the biadjoint triangle theorem proved in Chapter 4 to develop an analogue
theory for pseudoexponentiable objects and morphisms. We however do not do this here. Instead, we finish the chapter giving what seems to be 
a folklore result on exponentiability of coalgebras. Then, employing the biadjoint triangle theorem, we give the bicategorical analogue that studies the (pseudo)exponentiability
of pseudocoalgebras.

\begin{theo}[Exponentiability of Coalgebras]\label{Exponencial1 PhD}
Let $\SSSSS  $ be a comonad on a finitely complete category $\bbb $. If $\SSSSS $ preserves finite limits, then 
the forgetful functor $L: \SSSSS\textrm{-}\CoAlg \to \bbb $ reflects exponentiable objects.
\end{theo}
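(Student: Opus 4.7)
The plan is to reduce the problem to a standard application of the dual of Dubuc's adjoint triangle theorem, using that the hypothesis on $\SSSSS$ makes $L$ comonadic and product-preserving. First I would record the structural facts that follow from $\SSSSS$ preserving finite limits: the forgetful $L:\SSSSS\textrm{-}\CoAlg\to\bbb$ creates finite limits, so $\SSSSS\textrm{-}\CoAlg$ is finitely complete; moreover $L$ has a right adjoint $F:\bbb\to\SSSSS\textrm{-}\CoAlg$ (the cofree coalgebra functor), the adjunction $L\dashv F$ is comonadic, and $L$ preserves finite products.

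Given a coalgebra $\mathtt{y}$ whose underlying object $Y:=L\mathtt{y}$ is exponentiable in $\bbb$ with exponential $[Y,-]:\bbb\to\bbb$, product-preservation of $L$ yields a natural isomorphism $L\circ(\mathtt{y}\times -)\cong (Y\times -)\circ L$, so we have a commuting triangle
\[
\xymatrix{
\SSSSS\textrm{-}\CoAlg \ar[rr]^-{\mathtt{y}\times -}\ar[dr]_-{L} & & \SSSSS\textrm{-}\CoAlg \ar[dl]^-{L}\\
 & \bbb &
}
\]
in which the composite $L\circ(\mathtt{y}\times -)=(Y\times -)\circ L$ admits the right adjoint $F\circ[Y,-]$. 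The dual of Dubuc's adjoint triangle theorem (for a comonadic left adjoint $L$, a functor $J:\SSSSS\textrm{-}\CoAlg\to\SSSSS\textrm{-}\CoAlg$ admits a right adjoint whenever $LJ$ does and the ambient category admits the relevant equalizers of coreflexive pairs) produces the desired right adjoint $[\mathtt{y},-]$ to $\mathtt{y}\times -$, and the finite completeness of $\SSSSS\textrm{-}\CoAlg$ automatically supplies the equalizers required.

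As a more hands-on alternative, avoiding the adjoint triangle machinery, I would construct $[\mathtt{y},\mathtt{z}]$ explicitly. Every coalgebra $\mathtt{z}=(Z,\coalg_z)$ fits into a canonical equalizer $\mathtt{z}\to F(Z)\rightrightarrows F(\SSSSS Z)$ in $\SSSSS\textrm{-}\CoAlg$ (arising from the two coalgebra morphisms $F(\coalg_z)$ and the comultiplication $\delta_Z$), so one sets $[\mathtt{y},\mathtt{z}]$ to be the equalizer in $\SSSSS\textrm{-}\CoAlg$ of the induced coalgebra morphisms $F[Y,Z]\rightrightarrows F[Y,\SSSSS Z]$. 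The required bijection
\[
\SSSSS\textrm{-}\CoAlg(\mathtt{x},[\mathtt{y},\mathtt{z}])\cong \SSSSS\textrm{-}\CoAlg(\mathtt{x}\times\mathtt{y},\mathtt{z})
\]
then follows by chaining: representables on $\SSSSS\textrm{-}\CoAlg$ send equalizers to equalizers, $\SSSSS\textrm{-}\CoAlg(-,F(-))\cong\bbb(L-,-)$, $\bbb(L\mathtt{x}\times Y,-)\cong\bbb(L\mathtt{x},[Y,-])$, and $L(\mathtt{x}\times\mathtt{y})\cong L\mathtt{x}\times Y$.

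The main obstacle will be checking that the two parallel arrows entering the adjoint triangle argument (equivalently, the two maps $F[Y,Z]\rightrightarrows F[Y,\SSSSS Z]$ used in the direct construction) are genuinely coalgebra morphisms and that the equalizers produced in $\SSSSS\textrm{-}\CoAlg$ by lifting from $\bbb$ agree with those required by the universal properties. All of this is bookkeeping that is unlocked by the single hypothesis that $\SSSSS$ preserves finite limits, which is what guarantees simultaneously that $L$ creates the relevant equalizers and that $L$ preserves the products needed to close the triangle.
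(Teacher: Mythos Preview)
Your proposal is correct and your first argument is essentially identical to the paper's own proof: both use that $L$ creates finite limits (hence the square $L\circ(\mathtt{y}\times-)\cong (L\mathtt{y}\times-)\circ L$ commutes) and then invoke the dual of Dubuc's adjoint triangle theorem with the comonadic $L$. Your additional explicit construction of $[\mathtt{y},\mathtt{z}]$ as an equalizer is extra detail not present in the paper, but it is a standard unwinding of what the adjoint triangle theorem produces and is correct.
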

\begin{proof}
In this setting, $L$ creates finite limits. In particular, given an $\SSSSS$-coalgebra $\mathtt{y}= (Y, \coalg _ {{}_{\mathtt{y}}} ) $, we get that
$$\xymatrix@=2.8em{
\SSSSS\textrm{-}\CoAlg _ {\textrm{s}}\ar[r]^-{\mathtt{y}\times - }\ar[d]_ {L}
&
\SSSSS\textrm{-}\CoAlg _ {\textrm{s}}
\ar[d]^{L}
\\
\bbb\ar[r]_{(L\mathtt{y})\times - }
&
\bbb
}
$$ 
commutes. If $(L\mathtt{y})\times - \dashv \left[ (L\mathtt{y}), - \right] $, 
we have that $(L\mathtt{y})\times L(-) \dashv U\left[ (L\mathtt{y}), - \right] $. Since $L$ is comonadic, by the adjoint triangle theorem of Dubuc, we conclude
that $\mathtt{y}\times - $ has a right adjoint.
\end{proof}

Within the context of the theorem above, given an $\SSSSS$-coalgebra $\mathtt{y} $, by the Beck theorem, it is
clear that $L$ induces a functor $L/\mathtt{y}: \SSSSS\textrm{-}\CoAlg _ {\textrm{s}}/\mathtt{y}\to \bbb /L(\mathtt{y}) $
which creates limits and is comonadic  as well. Therefore:

\begin{coro}
Let $\SSSSS  $ be a comonad on a finitely complete category $\bbb $. If $\SSSSS $ preserves finite limits, then 
the forgetful functor $L: \SSSSS\textrm{-}\CoAlg _ {\textrm{s}}\to \bbb $ reflects exponentiable morphisms.
\end{coro}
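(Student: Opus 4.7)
The plan is to reduce the statement about exponentiable morphisms to the statement about exponentiable objects already proved in Theorem \ref{Exponencial1 PhD}, but now applied in the slice categories. Recall that, by definition, a morphism $f:\mathtt{x}\to \mathtt{y}$ in $\SSSSS\textrm{-}\CoAlg _ {\textrm{s}}$ is exponentiable if and only if it is exponentiable as an object of the slice $\SSSSS\textrm{-}\CoAlg _ {\textrm{s}}/\mathtt{y}$, and similarly for its image $L(f)$ in $\bbb/L(\mathtt{y})$. Thus the corollary is equivalent to the assertion that the slicing
$$ L/\mathtt{y}\,:\,\SSSSS\textrm{-}\CoAlg _ {\textrm{s}}/\mathtt{y}\longrightarrow \bbb/L(\mathtt{y})$$
reflects exponentiable objects, and it is this assertion that I would aim to obtain directly from Theorem \ref{Exponencial1 PhD}.

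First, I would check the three hypotheses needed to apply that theorem to $L/\mathtt{y}$ in place of $L$. Since $\bbb$ is finitely complete, so is the slice $\bbb/L(\mathtt{y})$, the terminal object being $\id_{L(\mathtt{y})}$ and the finite limits being the ones computed in $\bbb$. Next, the paragraph preceding the corollary asserts that $L/\mathtt{y}$ is comonadic and creates limits; hence there is a comonad $\SSSSS'$ on $\bbb/L(\mathtt{y})$ with $\SSSSS\textrm{-}\CoAlg _ {\textrm{s}}/\mathtt{y}\simeq \SSSSS'\textrm{-}\CoAlg _ {\textrm{s}}$ under $L/\mathtt{y}$. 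Finally, I would verify that $\SSSSS'$ preserves finite limits: it factors as the composition $(L/\mathtt{y})\circ R$, where $R$ is its right adjoint, so $R$ preserves all limits, and $L/\mathtt{y}$ preserves finite limits because it creates them and the source is finitely complete (as the slice of a finitely complete category of $\SSSSS$-coalgebras under the hypothesis that $\SSSSS$ preserves finite limits). Concatenating, $\SSSSS'$ preserves finite limits.

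With these three hypotheses in place, Theorem \ref{Exponencial1 PhD} applied to $\SSSSS'$ gives that the forgetful $L/\mathtt{y}$ reflects exponentiable objects, which by the reformulation above is exactly the desired statement that $L$ reflects exponentiable morphisms with codomain $\mathtt{y}$. Since $\mathtt{y}$ was arbitrary, the conclusion follows.

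The only nonroutine point is the justification of the comonadicity of $L/\mathtt{y}$ and the preservation of finite limits by the induced comonad on the slice; both follow from Beck's monadicity theorem (in its dual, comonadic form) together with the general fact that slicing a left adjoint between finitely complete categories produces a left adjoint whose right adjoint is given by pullback composed with the cofree functor. Once these are recorded, the rest is a direct invocation of Theorem \ref{Exponencial1 PhD}.
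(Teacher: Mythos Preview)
Your proposal is correct and follows essentially the same approach as the paper: the paper observes (in the paragraph preceding the corollary) that by Beck's theorem $L/\mathtt{y}:\SSSSS\textrm{-}\CoAlg _ {\textrm{s}}/\mathtt{y}\to\bbb/L(\mathtt{y})$ is comonadic and creates limits, and then states the corollary as an immediate consequence of Theorem~\ref{Exponencial1 PhD} applied to the slice. Your write-up simply makes explicit the verification that the induced comonad on the slice preserves finite limits, which the paper leaves implicit.
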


\begin{rem}\label{Categorias de Functores e Exponenciacao Capitulo Intrdutorio PhD}
An elementary application is given by the category of functors. For instance, if $X, Y $ are categories such that $X$ is small
and $Y$ is complete, there exists a (global) pointwise right Kan extension $\Cat \left[ X _ 0, Y\right] \to \Cat\left[ X , Y\right] $.
Since the forgetful functor $\Cat \left[ X, Y\right] \to \Cat\left[ X_0 , Y\right] $ creates equalizers,
 we conclude that $\Cat \left[ X, Y\right] \to \Cat\left[ X_0 , Y\right] $ is comonadic.

By Theorem \ref{Exponencial1 PhD}, we conclude that   $\Cat \left[ X, Y\right] \to \Cat\left[ X_0 , Y\right] $ 
 reflects exponentiable 
objects. More generally, a natural transformation is exponentiable in $\Cat \left[ X, Y\right] $ whenever it is objectwise exponentiable.
\end{rem}

From an argument entirely analogous to that given in the proof of Theorem \ref{Exponencial1 PhD}, 
using the enriched version of the adjoint triangle theorem as presented in Section 4.1~\cite{MR3491845}, we get  that:

\begin{theo}[Exponentiability of Strict Coalgebras]
Let $\SSSSS  $ be a $2$-comonad on a $2$-category $\bbb $. If $\bbb $ has and $\SSSSS $ preserves products and equalizers, then 
the forgetful $2$-functor $L: \SSSSS\textrm{-}\CoAlg _ {\textrm{s}}\to \bbb $ reflects exponentiable objects.
\end{theo}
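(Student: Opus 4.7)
The plan is to mimic the proof of Theorem \ref{Exponencial1 PhD} verbatim, replacing Dubuc's adjoint triangle theorem by its $\Cat$-enriched analogue presented in Section 4.1 of Chapter 4~\cite{MR3491845}. The hypothesis that $\bbb$ has and $\SSSSS$ preserves products and equalizers ensures, by the strict version of the Beck theorem for $2$-comonads, that $L$ is strictly $2$-comonadic and creates products and equalizers as conical $2$-limits. In particular $\SSSSS\textrm{-}\CoAlg _ {\textrm{s}}$ has products, so that the binary product with a fixed object $\mathtt{y}=(Y,\coalg _ {{}_{\mathtt{y}}})$ defines a $2$-functor $\mathtt{y}\times -:\SSSSS\textrm{-}\CoAlg _ {\textrm{s}}\to\SSSSS\textrm{-}\CoAlg _ {\textrm{s}}$ fitting into a strictly commutative square
$$
\xymatrix@=2.8em{
\SSSSS\textrm{-}\CoAlg _ {\textrm{s}}\ar[r]^-{\mathtt{y}\times - }\ar[d]_ {L}
&
\SSSSS\textrm{-}\CoAlg _ {\textrm{s}}
\ar[d]^{L}
\\
\bbb\ar[r]_{Y\times - }
&
\bbb
}
$$
because $L$ strictly creates and preserves binary products.

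Next, I would assume that $L\mathtt{y}=Y$ is exponentiable in $\bbb$, so that $Y\times -\dashv [Y,-]$ as a $2$-adjunction. Letting $R:\bbb\to\SSSSS\textrm{-}\CoAlg _ {\textrm{s}}$ denote the cofree right $2$-adjoint of $L$, the composition of $2$-adjunctions yields
$$
L\circ(\mathtt{y}\times -)\;=\;(Y\times -)\circ L\;\dashv\; R\circ[Y,-].
$$
Thus the composite $2$-functor $L\circ(\mathtt{y}\times -):\SSSSS\textrm{-}\CoAlg _ {\textrm{s}}\to\bbb$ admits a right $2$-adjoint.

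Finally, I would view the situation as a triangle with the strictly $2$-comonadic leg $L$ on the right and apply the enriched adjoint triangle theorem of Section 4.1 of Chapter 4~\cite{MR3491845} to deduce that $\mathtt{y}\times -$ itself has a right $2$-adjoint, which says exactly that $\mathtt{y}$ is exponentiable in $\SSSSS\textrm{-}\CoAlg _ {\textrm{s}}$. The only non-formal step — and the main obstacle to check — is that the equalizers of $L$-split (contractible) parallel pairs required by the enriched adjoint triangle theorem exist in $\SSSSS\textrm{-}\CoAlg _ {\textrm{s}}$; but this follows from the assumption that $\bbb$ has and $\SSSSS$ preserves equalizers, which guarantees that $L$ creates all equalizers and so all the needed $2$-equalizers exist and are computed strictly over $\bbb$. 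This completes the plan.
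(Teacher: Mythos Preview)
Your proposal is correct and follows exactly the approach the paper intends: the paper's own proof is simply the statement that the argument is ``entirely analogous to that given in the proof of Theorem~\ref{Exponencial1 PhD}, using the enriched version of the adjoint triangle theorem as presented in Section~4.1~\cite{MR3491845}.'' Your write-up spells out precisely this analogy, including the commutative square and the verification that the required equalizers exist in $\SSSSS\textrm{-}\CoAlg _ {\textrm{s}}$, so there is nothing to add.
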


Recall the definition of (strict) descent objects given in Section 4.3~\cite{MR3491845}.
Employing the strict version of the biadjoint triangle theorem given in Theorem 4.5.10~\cite{MR3491845}, we can study the exponentiability of
pseudocoalgebras.

\begin{theo}[Exponentiability of Pseudocoalgebras]
Let $\SSSSS  $ be a $2$-comonad on a $2$-category $\bbb $. If $\bbb $ has and $\SSSSS $ preserves products and strict descent objects, then 
the forgetful $2$-functor $L: \mathsf{Ps}\textrm{-}\SSSSS\textrm{-}\CoAlg \to \bbb $ reflects exponentiable objects.
\end{theo}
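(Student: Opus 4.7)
The plan is to imitate the proof of Theorem~\ref{Exponencial1 PhD}, replacing Dubuc's adjoint triangle theorem with the strict version of the biadjoint triangle theorem from Chapter~4 (Theorem~4.5.10), applied after dualising to convert a question about right $2$-adjoints into one about left $2$-adjoints.

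First, since $\bbb$ has products and strict descent objects and $\SSSSS$ preserves both, a standard creation argument shows that $\mathsf{Ps}\textrm{-}\SSSSS\textrm{-}\CoAlg$ has, and $L$ strictly preserves and creates, products and strict descent objects. In particular, for any pseudocoalgebra $\mathtt{y}=(Y,\coalg_{{}_{\mathtt{y}}})$, the product lifts to a strict $2$-functor $\mathtt{y}\times -\colon \mathsf{Ps}\textrm{-}\SSSSS\textrm{-}\CoAlg\to \mathsf{Ps}\textrm{-}\SSSSS\textrm{-}\CoAlg$ fitting into a strictly commutative square
$$\xymatrix@=2.8em{
\mathsf{Ps}\textrm{-}\SSSSS\textrm{-}\CoAlg\ar[r]^-{\mathtt{y}\times -}\ar[d]_{L}
&
\mathsf{Ps}\textrm{-}\SSSSS\textrm{-}\CoAlg\ar[d]^{L}
\\
\bbb\ar[r]_-{(L\mathtt{y})\times -}
&
\bbb.
}$$
Moreover, since $\SSSSS$ preserves strict descent objects, the forgetful $L$ is (strictly) $2$-comonadic in the sense required to invoke the pseudocomonadic version of Theorem~4.5.10.

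Next I would pass to opposites with respect to $1$-cells. Writing $\SSSSS^{\op}$ for the induced $2$-monad on $\bbb^{\op}$, one has $(\mathsf{Ps}\textrm{-}\SSSSS\textrm{-}\CoAlg)^{\op}\cong \mathsf{Ps}\textrm{-}\SSSSS^{\op}\textrm{-}\Alg$, the dualised forgetful $L^{\op}$ is strictly $2$-monadic over $\bbb^{\op}$, and the strict descent objects of $\mathsf{Ps}\textrm{-}\SSSSS\textrm{-}\CoAlg$ become the strict codescent objects appearing in the hypothesis of Theorem~4.5.10. If $L\mathtt{y}$ is exponentiable, we have an adjunction $(L\mathtt{y})\times -\dashv [L\mathtt{y},-]$ in $\bbb$, so $[L\mathtt{y},-]^{\op}\dashv ((L\mathtt{y})\times -)^{\op}$ in $\bbb^{\op}$; also $E^{\op}\dashv L^{\op}$ where $E$ is the cofree pseudocoalgebra right biadjoint to $L$.

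Now consider the triangle of strict $2$-functors
$$\xymatrix@=2.8em{
(\mathsf{Ps}\textrm{-}\SSSSS\textrm{-}\CoAlg)^{\op}\ar[rr]^-{J:=(\mathtt{y}\times -)^{\op}}\ar[dr]_-{R}
&&
(\mathsf{Ps}\textrm{-}\SSSSS\textrm{-}\CoAlg)^{\op}\ar[dl]^-{L^{\op}}
\\
&
\bbb^{\op}
&
}$$
with $R:=L^{\op}\circ J=((L\mathtt{y})\times -)^{\op}\circ L^{\op}$, which is a composite of left $2$-adjoints and hence has the right $2$-adjoint $E^{\op}\circ [L\mathtt{y},-]^{\op}$. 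The hypotheses of the strict version of Theorem~4.5.10 are satisfied: $L^{\op}$ is strictly $2$-monadic, $R$ has a right $2$-adjoint, and the requisite strict codescent objects exist. The theorem therefore produces a left $2$-adjoint to $J$, which, translated back through the opposite, is a right $2$-adjoint to $\mathtt{y}\times -$. Hence $\mathtt{y}$ is exponentiable in $\mathsf{Ps}\textrm{-}\SSSSS\textrm{-}\CoAlg$.

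The main obstacle I anticipate lies entirely in the first paragraph: verifying, in the pseudocoalgebraic setting, that $\SSSSS$-preservation of products and strict descent objects is strong enough to make $L^{\op}$ strictly $2$-monadic and to ensure that the lifted product $2$-functor renders the square strictly commutative (not merely up to coherent isomorphism). Once these strictness issues are cleanly settled, the passage to opposites and the invocation of Theorem~4.5.10 are formal.
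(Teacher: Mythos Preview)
Your approach is essentially the one the paper has in mind: the paper's proof is a one-line invocation of Theorem~4.5.10, meant to be read as a direct analogue of the argument for Theorem~\ref{Exponencial1 PhD}. The paper does not pass to opposites explicitly; it simply applies the \emph{comonadic} dual of Theorem~4.5.10 to the triangle $L\circ(\mathtt{y}\times -)\cong (L\mathtt{y}\times -)\circ L$, using that $L$ is pseudocomonadic and $(L\mathtt{y}\times -)\circ L$ has a right $2$-adjoint $E\,[L\mathtt{y},-]$. Your explicit dualisation is a legitimate way to reduce to the monadic form actually stated in Chapter~4, and the concern you flag about strictness of the lifted product square is the right thing to check (the creation argument goes through because $\SSSSS$ is a strict $2$-comonad preserving these limits).

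There is, however, a slip in your handling of the dualisation. Under $(-)^{\op}$ left and right adjoints exchange: from $L\dashv E$ and $(L\mathtt{y})\times -\dashv [L\mathtt{y},-]$ you get $E^{\op}\dashv L^{\op}$ and $[L\mathtt{y},-]^{\op}\dashv ((L\mathtt{y})\times -)^{\op}$, so $L^{\op}$ and $((L\mathtt{y})\times -)^{\op}$ are \emph{right} $2$-adjoints, not left ones. Hence $R=((L\mathtt{y})\times -)^{\op}\circ L^{\op}$ is a composite of right $2$-adjoints and has the \emph{left} $2$-adjoint $E^{\op}\circ [L\mathtt{y},-]^{\op}$. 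This is exactly what the monadic form of Theorem~4.5.10 requires (left biadjoint for $R$, codescent objects in the domain), so your conclusion that $J$ acquires a left $2$-adjoint---and therefore $\mathtt{y}\times -$ a right $2$-adjoint---is correct; only the labeling in that sentence needs fixing.
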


Finally, by the biadjoint triangle theorem given in Theorem 5.10 of Chapter 4~\cite{MR3491845}, we conclude that:

\begin{theo}[Pseudoexponentiability of Pseudocoalgebras]
Let $\SSSSS  $ be a pseudocomonad on a $2$-category $\bbb $. If $\bbb $ has and $\SSSSS $ preserves biproducts and descent objects, then 
the forgetful $2$-functor $L: \mathsf{Ps}\textrm{-}\SSSSS\textrm{-}\CoAlg \to \bbb $ reflects pseudoexponentiable objects.

\end{theo}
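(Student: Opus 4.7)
The plan is to mimic the proof of the Theorem on Exponentiability of Coalgebras recalled above, replacing Dubuc's adjoint triangle theorem by the biadjoint triangle theorem of Chapter 4 (Theorem 5.10), and being careful to work up to pseudonatural equivalence rather than strict commutation. Fix a pseudo-$\SSSSS$-coalgebra $\mathtt{y}$ whose underlying object $L\mathtt{y}$ is pseudoexponentiable in $\bbb$, so that we have a biadjunction $(L\mathtt{y}) \times - \dashv [L\mathtt{y}, -]$; write $R$ for the cofree pseudofunctor right biadjoint to $L$.

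First, I would verify that, since $\SSSSS$ preserves biproducts, the forgetful pseudofunctor $L:\mathsf{Ps}\textrm{-}\SSSSS\textrm{-}\CoAlg\to\bbb$ creates biproducts. As a consequence, $\mathsf{Ps}\textrm{-}\SSSSS\textrm{-}\CoAlg$ has biproducts, the pseudofunctor $\mathtt{y}\times - : \mathsf{Ps}\textrm{-}\SSSSS\textrm{-}\CoAlg \to \mathsf{Ps}\textrm{-}\SSSSS\textrm{-}\CoAlg$ is well defined, and the square
\[
\xymatrix@=2.8em{
\mathsf{Ps}\textrm{-}\SSSSS\textrm{-}\CoAlg \ar[r]^-{\mathtt{y}\times -}\ar[d]_{L} & \mathsf{Ps}\textrm{-}\SSSSS\textrm{-}\CoAlg \ar[d]^{L} \\
\bbb \ar[r]_-{(L\mathtt{y})\times -} & \bbb
}
\]
commutes up to pseudonatural equivalence. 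Composing biadjunctions then yields $(L\mathtt{y})\times L(-) \dashv R\circ [L\mathtt{y},-]$, so the leg $L\circ(\mathtt{y}\times -)$ of the triangle with apex $\bbb$ admits a right biadjoint.

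Next I would apply the biadjoint triangle theorem of Chapter 4 (Theorem 5.10), in the form appropriate for right biadjoints and a pseudocomonadic vertical side (obtained from the version cited in the introduction via the duality $(-)^{\op}$ of Remark \ref{Trifunctores PhD}), to the triangle
\[
\xymatrix{
\mathsf{Ps}\textrm{-}\SSSSS\textrm{-}\CoAlg \ar[rr]^-{\mathtt{y}\times -}\ar[dr]_-{L\circ(\mathtt{y}\times -)} && \mathsf{Ps}\textrm{-}\SSSSS\textrm{-}\CoAlg \ar[dl]^-{L}\\
& \bbb &
}
\]
to conclude that $\mathtt{y}\times -$ has a right biadjoint, i.e. that $\mathtt{y}$ is pseudoexponentiable in $\mathsf{Ps}\textrm{-}\SSSSS\textrm{-}\CoAlg$. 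The remaining hypothesis to check is that $\mathsf{Ps}\textrm{-}\SSSSS\textrm{-}\CoAlg$ admit the descent objects required by the theorem; this is exactly where the preservation of descent objects by $\SSSSS$ is used, since it implies that $L$ creates the relevant descent objects, lifting them from $\bbb$.

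The main obstacle is coherence bookkeeping: establishing the pseudonaturality (not merely the pointwise validity) of the equivalence in the square above, composing the two biadjunctions coherently to produce the right biadjoint for $L\circ(\mathtt{y}\times -)$, and carrying out the $(-)^{\op}$ dualisation of Theorem 5.10 so that the pseudomonadicity hypothesis in its stated form translates correctly into the pseudocomonadicity of $L$ available here. Once these pieces are in place, the biadjoint triangle theorem delivers the desired right biadjoint to $\mathtt{y}\times -$ and hence the pseudoexponentiability of $\mathtt{y}$.
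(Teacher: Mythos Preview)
Your proposal is correct and follows essentially the same approach as the paper: the paper does not spell out a proof for this theorem but indicates it is obtained by the pattern of Theorem~\ref{Exponencial1 PhD}, with Dubuc's adjoint triangle theorem replaced by the biadjoint triangle theorem of Chapter~4 (Theorem~5.10), exactly as you outline. Your identification of the role of the hypotheses (biproduct preservation to make the square commute up to pseudonatural equivalence, descent-object preservation to supply the descent objects required by the biadjoint triangle theorem) matches the paper's intent.
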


\begin{rem}
Similarly to the case of Remark \ref{Categorias de Functores e Exponenciacao Capitulo Intrdutorio PhD}, we can use the results above to 
study exponentiability and pseudoexponentiability of the $2$-category  of $2$-functors $\left[ \aaa , \bbb \right] $ (with $2$-natural transformations and modifications) and in the $2$-category of pseudofunctors
$\left[ \aaa , \bbb \right] _ {PS} $ (with pseudonatural transformations and modifications) as defined in Section 2 of Chapter 4~\cite{MR3491845}. 

For instance, if $\aaa $ is small and $\bbb $ is $2$-complete, we conclude that
a $2$-functor is exponentiable in $\left[ \aaa , \bbb \right] $ if it is objectwise exponentiable.
Moreover, using the pointwise pseudo-Kan extension constructed in 4.9.2~\cite{MR3491845} (or in 3.3.5, Section 3 of Chapter 3), we get an analogous result for pseudofunctors. More precisely, assuming that
$\bbb $ is bicategorically complete and $\aaa $ is small, we get that a pseudofunctor in   $\left[ \aaa , \bbb \right] _ {PS} $ is pseudoexponentiable 
whenever it is objectwise pseudoexponentiable.
\end{rem}




\begin{spacing}{0.9}


\bibliographystyle{apalike}
\cleardoublepage
\bibliography{references} 



\end{spacing}



\end{document}